\title     [Generalized $q$-boson algebras]{Generalized $q$-boson algebras and their integrable modules}
\thanks    {This work was partially supported by Grant-in-Aid for Scientific Research (C) 20540036, Japan Society for the Promotion of Science.}
\author    {Akira Masuoka}
\address   {Institute of Mathematics, University of Tsukuba, Ibaraki 305-8571, Japan}
\email     {akira@math.tsukuba.ac.jp}
\date      {}
\subjclass [2000]{16W30, 17B37}
\keywords  {Hopf algebra, Nichols algebra, generalized $q$-boson algebra, integrable module.}
\numberwithin{equation}{section}
\newtheorem{theorem}              {Theorem}[section]
\newtheorem{lemma}       [theorem]{Lemma}
\newtheorem{proposition} [theorem]{Proposition}
\newtheorem{corollary}   [theorem]{Corollary}
\newtheorem{definition}  [theorem]{Definition}
\newtheorem{remark}      [theorem]{Remark}
\newcommand{\cmdrhu}       {\rightharpoonup}
\newcommand{\cmdA}         {\mathcal{A}}
\newcommand{\cmdB}         {\mathcal{B}}
\newcommand{\cmdH}         {\mathcal{H}}
\newcommand{\cmdI}         {\mathcal{I}}
\newcommand{\cmdO}         {\mathcal{O}}
\newcommand{\cmdS}         {\mathcal{S}}
\newcommand{\cmdYD}        {\mathcal{YD}}
\newcommand{\cmdbbA}       {\mathbb{A}}
\newcommand{\cmdbbD}       {\mathbb{D}}
\newcommand{\cmdbbN}       {\mathbb{N}}
\newcommand{\cmdbbQ}       {\mathbb{Q}}
\newcommand{\cmdbbZ}       {\mathbb{Z}}
\newcommand{\cmdbbm}       {\mathbf{m}}
\newcommand{\cmdg}         {\mathfrak{g}}
\newcommand{\cmdop}        {\mathrm{op}}
\newcommand{\cmdcop}       {\mathrm{cop}}
\newcommand{\cmdVec}       {\mathcal{V}\!\mbox{\it ec}}
\newcommand{\cmddeg}       {\mathop{\mathrm{deg}}}
\newcommand{\cmddiag}      {\mathop{\mathrm{diag}}}
\newcommand{\cmdid}        {\mathop{\mathrm{id}}}
\newcommand{\cmdtw}        {\mathop{\mathrm{tw}}}
\newcommand{\cmdIm}        {\mathop{\mathrm{Im}}}
\newcommand{\cmdEnd}       {\mathop{\mathrm{End}}}
\newcommand{\cmdHom}       {\mathop{\mathrm{Hom}}}
\newcommand{\cmdKer}       {\mathop{\mathrm{Ker}}}
\newcommand{\cmdMax}       {\mathop{\mathrm{Max}}}
\newcommand{\cmddotrtimes} {\mathop{\raisebox{0.2ex}{\makebox[0.86em][l]{${\scriptstyle>\mathrel{\mkern-4mu}\lessdot}$}}\raisebox{0.12ex}{$ \shortmid$}}}
\newcommand{\cmdarrow}     {\mathop{\longrightarrow}}
\begin{document}

\begin{abstract}
  We define the generalized $q$-boson algebra $ \cmdB$ associated to a pair of Nichols algebras and a skew pairing.
  We study integrable $ \cmdB$-modules,
  generalizing results by M. Kashiwara and T. Nakashima on integrable modules over a $q$-boson (Kashiwara) algebra.
\end{abstract}

\maketitle

\setcounter{section}{-1}

\section{Introduction}

As was realized by Andruskiewitsch and Schneider (see \cite{AS}),
the notion of Nichols algebras gives a sophisticated, `coordinate-free' viewpoint to study quantized enveloping algebras and associated objects.
This paper aims to clarify from that viewpoint what happens to $q$-boson (Kashiwara) algebras and their modules.

Let $ U_q $ denote the quantized enveloping algebra associated to a symmetrizable generalized Cartan matrix $ \cmdbbA $, and assume that $q$ is transcendental over $ \cmdbbQ $.
To study crystal bases of $ U_q $,
Kashiwara \cite{K} introduced the $q$-{\it boson} ({\it Kashiwara}) {\it algebra} $ B_q $, and proved that the minus part $ U_q^- $ is naturally an integrable left $ B_q $-module (the {\it Verma module}), and is in fact simple as a $ B_q $-module.
He also announced without proof that every integrable left $ B_q $-modules is isomorphic to the direct sum of some copies of $ U_q^- $; see \cite[Remark 3.4.10]{K}.
A proof of this fact was given later by Nakashima \cite{N},
who introduced the {\it extremal projector} for the purpose.
Tan \cite[Proposition 3.2]{Tan} formulated the same result in the generalized situation when $ \cmdbbA $ is a symmetrizable Borcherds-Cartan matrix, and gave a proof.
But, his proof and even formulation are incomplete, I think; see Remark \ref{4.6}.

As was shown in \cite{M1}, $ U_q $ can be constructed as a cocycle deformation of a simpler, graded Hopf algebra.
In \cite{M2}, this construction was generalized in the context of ({\it pre}-){\it Nichols algebras},
and as its special case, the construction of $ U_q $ by generalized quantum doubles,
as was given by Joseph \cite{J}, was explained; see also \cite{RS}.
Recall that this last construction works, when we are given data $ R, S, \tau $.
Here, $ R = \bigoplus_{n = 0}^{\infty} R ( n ) $, $ S = \bigoplus_{n = 0}^{\infty} S ( n ) $ are Nichols algebras (see \cite{AS}),
a sort of braided graded Hopf algebras, in the braided tensor categories $ {}_J^J \cmdYD $, $ {}_K^K \cmdYD $, respectively,
of Yetter-Drinfeld modules, where $J$, $K$ are (ordinary) Hopf algebras with bijective antipode.
As their bosonizations, we have the graded Hopf algebras $ R \cmddotrtimes J $, $ S \cmddotrtimes K $.
The $ \tau $ above is a skew pairing $ ( R \cmddotrtimes J ) \otimes ( S \cmddotrtimes K ) \rightarrow k $ such that $ \tau ( R ( n ) \otimes J , S ( m ) \otimes K ) = 0 $ if $ n \ne m $.
In this paper, given such $R$, $S$, $\tau$, we define the {\it generalized} $q$-{\it boson algebra} $ \cmdB$ (see Definition \ref{3.1}),
and prove for this, the result by Kashiwara \cite{K} and Nakashima \cite{N} cited above, under the assumptions that $ R ( 1 ) $, $ S ( 1 ) $ are finite-dimensional,
and $ \tau$ is non-degenerate, restricted to $ R ( 1 ) \otimes S ( 1 ) $.
We formulate the result as a category equivalence
\begin{equation*}
  \cmdO(\cmdB) \approx \mathop \cmdVec
\end{equation*}
between the integrable left $ \cmdB $-modules and the vector spaces; see Theorem \ref{3.13}.
The original result is recovered if we suppose that $ R \cmddotrtimes J = U_q^{\leq 0} $, $ S \cmddotrtimes K = U_q^{\geq 0} $,
and $ \tau$ is the Killing form given by Tanisaki \cite{Ts}.
It is a key for us to identify $ \cmdB $ with the generalized smash product $R \# S$ associated to $ \tau$,
which enables us to define a natural representation $ \rho : \cmdB = R \# S \rightarrow \cmdEnd ( R ) $ of $ \cmdB $ on $R$;
see Proposition \ref{3.4}.
It is not difficult to prove that $ \rho $ is injective, and its image is dense in $ \cmdEnd ( R ) $.
The latter immediately implies that $R$, regarded as a left $ \cmdB $-module (the Verma module) by $  \rho$,
is simple; see Theorem \ref{3.6}.
As its completion, $ \rho $ extends to an isomorphism $ \displaystyle \hat{\cmdB} \cmdarrow^{\simeq} \cmdEnd ( R ) $ of (complete) topological algebras.
We will define the {\it extremal projector} in $ \hat{ \cmdB } $ (see Definition \ref{3.15}),
which will play an important role to complete the proof of our main result.

Our main result cannot apply directly to Tan's situation, in which the matrix $ \cmdbbA $ may not be finite,
because $ R ( 1 ) $, $ S ( 1 ) $ then are not necessarily finite-dimensional.
But, we will refine Tan's result cited above, by slightly modifying the definition of $ \cmdO ( \cmdB )$ and the proof of our main result; see Theorem \ref{4.1}.
The main result will also apply to the special situation in which $ \cmdbbA $ is of finite type,
$q$ is a root of $1$, and $ U_q $ is replaced by a finite-dimensional quotient, $ u_q $, called the {\it Frobenius-Lusztig kernel}.
The associated $ \cmdB $ is then seen to be a finite-dimensional simple algebra which is isomorphic to $ \cmdEnd( R ) $ via $ \rho ( = \hat{\rho} ) $; see Section \ref{4.4}.

Preceding Sections 3, 4 whose contents were roughly described above,
Sections 1, 2 are devoted to showing preliminary results on braided Hopf algebras and skew pairings.

\section{Preliminaries on braided Hopf algebras}

Throughout this paper we work over a fixed field $k$, whose characteristic may be arbitrary except in the last Section 4.

\subsection{}
Let $J$ be a Hopf algebra.
We denote the coalgebra structure by
\begin{equation} \label{eq1.1}
  \Delta = \Delta_J : J \longrightarrow J \otimes J , \quad \Delta ( a ) = a_1 \otimes a_2 ; \quad \varepsilon : J \longrightarrow k,
\end{equation}
and the antipode by $ \cmdS = \cmdS_J$ in script.
We assume that $ \cmdS $ is bijective, and denote the composite-inverse by $ \overline{\cmdS} $, which will be called the {\it pode} of $J$.
Let $ {}_J^J \cmdYD $ denote the braided tensor category of Yetter-Drinfeld modules over $J$; see \cite[p.213]{Mo}.
Let $ V \in {}_J^J \cmdYD $.
Thus, $V$ is a left $J$-module, whose action will be denoted by $ a \cmdrhu v \ ( a \in J, v \in V ) $.
It is at the same time a left $J$-comodule, whose structure will be denoted by
\begin{equation} \label{eq1.2}
  \theta : V \longrightarrow J \otimes V, \quad \theta ( v ) = v_J \otimes v_V ,
\end{equation}
and satisfies
\begin{equation*}
  \theta (a \cmdrhu v) = a_1 v_J \cmdS ( a_3 ) \otimes ( a_2 \cmdrhu v_V ) \quad ( a \in J, v \in V ).
\end{equation*}
  The category ${}_J^J\cmdYD$ has the obvious tensor product and the braiding given by
\begin{equation} \label{eq1.3}
  c = c_{V , W} : V \otimes W \cmdarrow^{\simeq} W \otimes V, \quad c ( v \otimes w ) = (v_J \cmdrhu w) \otimes v_V,
\end{equation}
where $W$ is another object in ${}_J^J \cmdYD$.
The inverse of $c$ is given by
\begin{equation} \label{eq1.4}
  c^{-1} ( w \otimes v ) = v_V \otimes ( \overline{\cmdS} ( v_J ) \cmdrhu w).
\end{equation}

\subsection{}
Since ${}_J^J\cmdYD$ is braided, (graded) bialgebras or Hopf algebras in $ {}_J^J\cmdYD $ are defined in the natural manner,
which are called with the adjective `braided' added.
Let $V \in {}_J^J \cmdYD$, as above.
The tensor algebra $ T ( V ) $ of $V$ turns uniquely into a braided bialgebra in $ {}_J^J \cmdYD $,
if each element $ v \in V $ is supposed to be primitive so that $ \Delta ( v ) = v \otimes 1 + 1 \otimes v $.
In fact, $T ( V )$ is a braided graded Hopf algebra in $ {}_J^J \cmdYD $ with respect to the obvious grading.
Here and in what follows, gradings mean those by $ \cmdbbN = \{ 0, 1, 2, \cdots \} $ unless otherwise stated.
A {\it pre-Nichols algebra} \cite{M2} of $V$ is a quotient $ T ( V ) / I $ of $ T ( V ) $ by some homogeneous bi-ideal (necessarily, Hopf ideal) $I$ in $ {}_J^J \cmdYD $ such that $ I \cap V = 0 $.
In other words, it is a braided graded bialgebra $ R = \bigoplus_{n = 0}^{\infty} R ( n ) $ such that
\begin{eqnarray}
  & R ( 0 ) = k, \label{eq1.5} & \\
  & R ( 1 ) = V, \label{eq1.6} & \\
  & R \mbox{ is generated by } R ( 1 ), \label{eq1.7} &
\end{eqnarray}
The condition (\ref{eq1.5}) implies
\begin{equation*}
  ( V = ) R ( 1 ) \subset P ( R ) := \{ \mbox{all primitives in } R \},
\end{equation*}
and that $R$ is a braided (graded) Hopf algebra.
A pre-Nichols algebra $ T ( V ) / I = R $ of $V$ is said to be the {\it Nichols algebra} \cite{AS} of $V$,
if $I$ is the largest possible, or equivalently if $R ( 1 ) = P ( R )$.

\subsection{}
Let $ R = \bigoplus_{n = 0}^{\infty} R ( n )$ be such a braided graded Hopf algebra in $ {}_J^J \cmdYD $ that satisfies (\ref{eq1.5}).
To distinguish from the ordinary (or trivially braided) situation, we denote the coproduct of $R$ by
\begin{equation*}
  \Delta_R ( r ) = r_{ ( 1 ) } \otimes r_{ ( 2 ) } \quad (r \in R) ; \mbox{ cf. (\ref{eq1.1})}.
\end{equation*}
By Radford's biproduct construction (or bosonization), we have an ordinary Hopf algebra,
\begin{equation*}
  \cmdA = R \cmddotrtimes J.
\end{equation*}
This is denoted by $ R \times J $ in \cite{R}, and by $ R \# J $ in \cite{AS} and others; our notation is due to Shahn Majid.
As a vector space, $ \cmdA $ equals $ R \otimes J $; an element $ r \otimes a $ in $ \cmdA $ will be denoted simply by $ r a $.
$ \cmdA $ has the unit $ 1 \cdot 1 ( = 1 \otimes 1 ) $ and the counit $ \varepsilon \otimes \varepsilon $,
while its product and coproduct are given respectively by
\begin{eqnarray*}
  ( r a ) ( s b ) &=& r ( a_1 \cmdrhu s ) ( a_2 b ) \\
   \Delta ( r a ) &=& r_{( 1 )} ( r_{( 2 ) J} a_1 ) \otimes r_{( 2 ) R} a_2,
\end{eqnarray*}
where $r , s \in R, a , b \in J$.
Especially, if $v \in R ( 1 )$, then $v \in P ( R )$, whence
\begin{equation} \label{eq1.8}
  \Delta_{\cmdA} ( v ) = v \otimes 1 + v_J \otimes v_R.
\end{equation}
Notice that $ \cmdA$ is a graded Hopf algebra with $ \cmdA ( n ) = R ( n ) \otimes J$.
Since by (\ref{eq1.5}), the coradical of $ \cmdA$ is included in $J = \cmdA ( 0 )$, $ \cmdA$ has a bijective antipode.

Let $ \cmdA^{ \cmdcop }$ denote the algebra $ \cmdA$ which is given the coproduct $ \Delta^{ \cmdcop } ( \alpha ) = \alpha_2 \otimes \alpha_1 \ ( \alpha \in \cmdA )$ opposite to the original one.
By the original counit and grading, $ \cmdA^{\cmdcop}$ is a graded Hopf algebra with bijective antipode $ \overline{ \cmdS }_{ \cmdA } $.
The original Hopf algebra projection $ \varepsilon \otimes \cmdid : \cmdA = R \otimes J \rightarrow J$ gives a Hopf algebra projection $ \cmdA^{ \cmdcop } \rightarrow J^{ \cmdcop }$,
which we call $ \pi$.
Let
\begin{equation} \label{eq1.9}
  \overline{R} = \{ \alpha \in \cmdA^{ \cmdcop } \mid ( \cmdid \otimes \pi ) \circ \Delta^{\cmdcop} ( \alpha ) = \alpha \otimes 1 \}
\end{equation}
denote the (left coideal) subalgebra of $ \cmdA^{ \cmdcop }$ consisting of all right $J^{ \cmdcop }$-coinvariants along $ \pi$.
By Radford \cite{R}, $ \overline{R}$ forms a braided graded Hopf algebra in ${}_{ J^{\cmdcop} }^{ J^{\cmdcop} } \cmdYD$ such that $ \overline{R} \cmddotrtimes J^{ \cmdcop } = \cmdA^{ \cmdcop }$.

\begin{proposition} \label{1.1}
  Let $ \cmdS$, $ \overline{ \cmdS }$ denote the antipode and the pode of $ \cmdA$, respectively.
  \begin{enumerate}
    \item $ \overline{R} = \cmdS ( R ) = \overline{ \cmdS } ( R )$.
    \item $ \overline{R}$ is in ${}_{ J^{\cmdcop} }^{ J^{\cmdcop} } \cmdYD$ with respect to the structure given by
    \begin{eqnarray*}
      & & a^{ \cmdcop } \cmdrhu \overline{ \cmdS } ( r ) := \overline{ \cmdS } ( a \cmdrhu r ),\\
      & & \overline{\cmdS} ( r ) \mapsto \overline{ \cmdS } ( r_J ) \otimes \overline{ \cmdS } ( r_R ),
    \end{eqnarray*}
    where $r \in R$, and $a \in J$ with copy $a^{ \cmdcop } \in J^{ \cmdcop }$.
    \item The coproduct of $ \overline{R}$ is given by
    \begin{equation*}
      \Delta_{ \overline{R} } ( \overline{ \cmdS } ( r ) ) = \overline{ \cmdS } ( \overline{ \cmdS } ( r_{ ( 2 ) J } ) \cmdrhu r_{ ( 1 ) } ) \otimes \overline{ \cmdS } ( r_{ ( 2 ) R } ).
    \end{equation*}
    where $r \in R$.
  \end{enumerate}
\end{proposition}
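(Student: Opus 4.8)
The plan is to first re-read the definition \eqref{eq1.9} of $\overline{R}$ so that it becomes a statement about the \emph{original} Hopf algebra $\cmdA$. Since $\Delta^{\cmdcop}(\alpha) = \alpha_2 \otimes \alpha_1$, the condition $(\cmdid \otimes \pi) \circ \Delta^{\cmdcop}(\alpha) = \alpha \otimes 1$ is, after flipping the two tensor factors, exactly $(\pi \otimes \cmdid) \circ \Delta(\alpha) = 1 \otimes \alpha$; hence $\overline{R} = {}^{\cmdco \pi}\cmdA$, the space of \emph{left} $\pi$-coinvariants of $\cmdA$, whereas $R = R \cdot 1$ is the space $\cmdA^{\cmdco \pi}$ of \emph{right} $\pi$-coinvariants (Radford \cite{R}). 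With this in hand, part (1) follows from a short antipode computation: if $r \in \cmdA^{\cmdco \pi}$, i.e.\ $r_1 \otimes \pi(r_2) = r \otimes 1$, then applying $\cmdS \otimes \cmdS_J$, using $\pi \circ \cmdS = \cmdS_J \circ \pi$, and flipping gives $(\pi \otimes \cmdid)\Delta(\cmdS(r)) = 1 \otimes \cmdS(r)$, so $\cmdS(r) \in {}^{\cmdco \pi}\cmdA$; the same argument with $\overline{\cmdS}$ works verbatim. The symmetric statements show that $\cmdS$ and $\overline{\cmdS}$ restrict to mutually inverse bijections between $\cmdA^{\cmdco \pi}$ and ${}^{\cmdco \pi}\cmdA$, whence $\overline{R} = \cmdS(R) = \overline{\cmdS}(R)$.

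For (2) and (3) I would work inside $\cmdA^{\cmdcop} = \overline{R} \cmddotrtimes J^{\cmdcop}$ and use that, by \cite{R}, the three structure maps of $\overline{R}$ are recovered from $\cmdA^{\cmdcop}$ as follows: the $J^{\cmdcop}$-action is the adjoint action of $\cmdA^{\cmdcop}$; the $J^{\cmdcop}$-coaction is $(\pi \otimes \cmdid) \circ \Delta_{\cmdA^{\cmdcop}}$; and the braided coproduct is $(\Pi \otimes \Pi) \circ \Delta_{\cmdA^{\cmdcop}}$, where $\Pi : \cmdA^{\cmdcop} \to \overline{R}$ is the canonical projection onto the coinvariants. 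I would record once and for all the two auxiliary facts that make these usable: that the original $J$-action on $R \subset \cmdA$ is $a \cmdrhu r = a_1\, r\, \cmdS(a_2)$, and that $\Pi$ collapses a product of an $\overline{R}$-factor and a $J^{\cmdcop}$-factor by its counit, so that $\Pi\big((b \cmdrhu \beta)\, c\big) = \varepsilon(c)\, (b \cmdrhu \beta)$ for $\beta \in \overline{R}$ and $b, c \in J$.

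The action formula in (2) is then immediate: the adjoint action in $\cmdA^{\cmdcop}$ is $a^{\cmdcop} \cmdrhu \beta = a_2\, \beta\, \overline{\cmdS}(a_1)$ (product of $\cmdA$, since $\cmdA^{\cmdcop}$ has the same multiplication, and the antipode of $\cmdA^{\cmdcop}$ is $\overline{\cmdS}$); substituting $\beta = \overline{\cmdS}(r)$, writing $a_2 = \overline{\cmdS}(\cmdS_J(a_2))$, and using that $\overline{\cmdS}$ is anti-multiplicative turns this into $\overline{\cmdS}(a_1\, r\, \cmdS_J(a_2)) = \overline{\cmdS}(a \cmdrhu r)$. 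For the two coproduct-type statements I would compute $\Delta_{\cmdA^{\cmdcop}}(\overline{\cmdS}(r))$ explicitly. Since $\overline{\cmdS}$ reverses the coproduct and the coproduct formula for $\cmdA$ gives $\Delta_{\cmdA}(r) = r_{(1)}\, r_{(2)J} \otimes r_{(2)R}$, anti-multiplicativity yields
\begin{equation*}
  \Delta_{\cmdA^{\cmdcop}}(\overline{\cmdS}(r)) = \overline{\cmdS}_J(r_{(2)J})\, \overline{\cmdS}(r_{(1)}) \otimes \overline{\cmdS}(r_{(2)R}).
\end{equation*}
Applying $\pi \otimes \cmdid$ and simplifying with the counit axioms of the braided coalgebra $R$ and of its $J$-comodule structure gives precisely the coaction in (2), namely $\overline{\cmdS}(r_J) \otimes \overline{\cmdS}(r_R)$. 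Applying $\Pi \otimes \Pi$ gives (3): the right-hand tensor factor is fixed by $\Pi$, while in the left-hand factor I first use the smash-product commutation rule of $\cmdA^{\cmdcop}$ to move the $J^{\cmdcop}$-element $\overline{\cmdS}_J(r_{(2)J})$ past $\overline{\cmdS}(r_{(1)})$, then apply the collapsing identity for $\Pi$ together with the action formula just proved; this is exactly what produces the inner pode in $\overline{\cmdS}(\overline{\cmdS}(r_{(2)J}) \cmdrhu r_{(1)})$.

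The main obstacle is entirely bookkeeping rather than conceptual: one must keep straight which coproduct ($\cmdA$ versus $\cmdA^{\cmdcop}$, $J$ versus $J^{\cmdcop}$) and which of $\cmdS, \overline{\cmdS}$ is in force at each step, and in particular apply the smash-product commutation relation of $\cmdA^{\cmdcop}$ in the correct order. The appearance of the extra $\overline{\cmdS}(r_{(2)J})$ inside the action in (3) is precisely the residue of this reordering, and getting its placement right (as opposed to, say, $\cmdS(r_{(2)J})$) is the step most prone to error.
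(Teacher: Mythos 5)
Your proposal is correct and takes essentially the same route as the paper: part (1) by the straightforward coinvariant/antipode argument, and parts (2)--(3) by invoking Radford's Theorem 3 for the biproduct $\cmdA^{\cmdcop} = \overline{R}\cmddotrtimes J^{\cmdcop}$ and computing the coproduct of $\overline{\cmdS}(r)$ in smash-decomposed form. Indeed, your re-decomposition of $\overline{\cmdS}_J(r_{(2)J})\,\overline{\cmdS}(r_{(1)})$ via the commutation rule of $\cmdA^{\cmdcop}$ is exactly the paper's key formula (\ref{eq1.10}), up to the flip between $\Delta_{\cmdA}$ and $\Delta_{\cmdA^{\cmdcop}}$.
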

\begin{proof}
  It is easy to prove (1).
  The remaining (2), (3) follow from \cite[Theorem3]{R}.
  To see (3), notice that the coproduct $ \Delta_{\cmdA} ( \overline{ \cmdS } ( r ) )$ of $ \cmdA$ is given by
  \begin{equation} \label{eq1.10}
      \Delta_{ \cmdA } ( \overline{ \cmdS } ( r ) ) = \overline{ \cmdS } ( r_{ (2) R } ) \otimes \overline{ \cmdS } ( \overline{ \cmdS } ( r_{ ( 2 ) J 1 } ) \cmdrhu r_{ ( 1 ) } ) \overline{ \cmdS } ( r_{ ( 2 ) J 2 } ).
  \end{equation}
\end{proof}

\begin{proposition} \label{1.2}
  With the same notation as above, set $V = R ( 1 )$ in $ {}_J^J \cmdYD $, and $ \overline{V} = \overline{R} ( 1 )$ in $ {}_{ J^{\cmdcop} }^{ J^{\cmdcop} } \cmdYD $.
  \begin{enumerate}
    \item $ \overline{V} = \cmdS ( V ) = \overline{ \cmdS } ( V )$.
    \item $R$ is a pre-Nichols algebra (resp., the Nichols algebra) of $V$ if and only if $ \overline{R}$ is a pre-Nichols algebra (resp., the Nichols algebra) of $ \overline{V}$.
  \end{enumerate}
\end{proposition}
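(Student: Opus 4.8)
The plan is to deduce both parts from the single fact that the antipode $\cmdS$ and the pode $\overline{\cmdS}$ of $\cmdA$ restrict to graded algebra anti-isomorphisms $R \cmdarrow^{\simeq} \overline{R}$. First I would record that, since $\cmdA = \bigoplus_n \cmdA ( n )$ is a graded Hopf algebra, its antipode $\cmdS$ is homogeneous of degree $0$ (being the convolution inverse of the graded identity), and hence so is $\overline{\cmdS} = \cmdS^{-1}$. By Proposition \ref{1.1}(1) the pode carries $R$ bijectively onto $\overline{R}$; as it sends $R ( n ) = R ( n ) \otimes 1$ into $\overline{R} \cap \cmdA ( n ) = \overline{R} ( n )$ and is bijective, it restricts to isomorphisms $R ( n ) \cmdarrow^{\simeq} \overline{R} ( n )$ for every $n$. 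Taking $n = 1$ gives $\overline{V} = \overline{\cmdS} ( V )$, and the same reasoning with $\cmdS$ yields $\overline{V} = \cmdS ( V )$; this is (1).

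For the pre-Nichols claim in (2), note that by the standing assumption (\ref{eq1.5}) both $R$ and $\overline{R}$ are already braided graded Hopf algebras with degree-$0$ part $k$, and $R ( 1 ) = V$, $\overline{R} ( 1 ) = \overline{V}$ by construction; so in each case being a pre-Nichols algebra of the respective degree-$1$ space reduces to being generated in degree $1$. Since the product of $\overline{R}$ is the restriction of that of $\cmdA$, the pode is a graded algebra anti-isomorphism, carrying a product $v_1 \cdots v_n$ of elements of $V$ to the product $\overline{\cmdS} ( v_n ) \cdots \overline{\cmdS} ( v_1 )$ of elements of $\overline{V}$ and being onto $\overline{R}$; hence $R$ is generated by $R ( 1 )$ iff $\overline{R}$ is generated by $\overline{R} ( 1 )$, the converse direction being handled symmetrically by $\cmdS$.

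For the Nichols claim, where the extra condition is $P ( R ) = R ( 1 )$ (resp. $P ( \overline{R} ) = \overline{R} ( 1 )$), the plan is to prove $P ( \overline{R} ) = \overline{\cmdS} ( P ( R ) )$ and then invoke the homogeneity of $\overline{\cmdS}$ together with part (1). The inclusion $\overline{\cmdS} ( P ( R ) ) \subseteq P ( \overline{R} )$ I would obtain by substituting $\Delta_R ( r ) = r \otimes 1 + 1 \otimes r$ into the coproduct formula of Proposition \ref{1.1}(3): using $\theta ( 1 ) = 1 \otimes 1$, $a \cmdrhu 1 = \varepsilon ( a ) 1$, the counit law $( \varepsilon \otimes \cmdid ) \circ \theta = \cmdid$, and $\varepsilon \circ \overline{\cmdS} = \varepsilon$, the two terms collapse to $\overline{\cmdS} ( r ) \otimes 1 + 1 \otimes \overline{\cmdS} ( r )$. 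The reverse inclusion will come from the involutivity of the biproduct construction: applied to $\overline{R} \subset \cmdA^{\cmdcop}$, whose pode is $\cmdS$, Proposition \ref{1.1}(1) gives $\cmdS ( \overline{R} ) = R$, so the same computation yields $\cmdS ( P ( \overline{R} ) ) \subseteq P ( R )$, that is $P ( \overline{R} ) \subseteq \overline{\cmdS} ( P ( R ) )$. The hard part will be exactly this primitive computation and correctly setting up the involution --- identifying the pode of $\cmdA^{\cmdcop}$ as $\cmdS$ and checking that Proposition \ref{1.1} applies to the pair $( \overline{R}, \cmdA^{\cmdcop} )$ --- after which the homogeneity of $\overline{\cmdS}$ and part (1) give $P ( R ) = R ( 1 ) \Longleftrightarrow P ( \overline{R} ) = \overline{R} ( 1 )$ at once.
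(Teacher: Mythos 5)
Your proposal is correct and takes essentially the same route as the paper: the paper likewise deduces part (1) and the pre-Nichols assertion from Proposition \ref{1.1}, and settles the Nichols assertion by noting that $ \overline{\cmdS} $ induces an isomorphism $ P ( R ) \cmdarrow^{\simeq} P ( \overline{R} ) $ (its equation (\ref{eq1.11})), which is exactly what you establish. Your explicit primitive computation from the coproduct formula of Proposition \ref{1.1}(3), together with the involution argument for the reverse inclusion, simply fills in the details the paper compresses into ``we see directly.''
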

\begin{proof}
  Part (1) and the assertion on `pre-Nichols' in (2) follow by Proposition \ref{1.1}, while the assertion on `Nichols' follows since we see directly that $ \overline{\cmdS}$ induces an isomorphism
  \begin{equation} \label{eq1.11}
    P ( R ) \cmdarrow^{\simeq} P ( \overline{R} ). 
  \end{equation}
  See also the last part of the following subsection.
\end{proof}

\subsection{}
Let $ V \in {}_J^J \cmdYD $.
Since $ J^{\cmdcop} = J $ as an algebra, $V$ may be regarded as a left $ J^{\cmdcop} $-module, which we denote by $ V^t $.
Then, $ V^t $ turns into an object in $ {}_{ J^{\cmdcop} }^{ J^{\cmdcop} } \cmdYD $ with respect to the modified comodule structure $ v \mapsto \overline{ \cmdS } ( v_J ) \otimes v_V$.
Moreover, $ V \mapsto V^t $ gives a category isomorphism $ \displaystyle {}_J^J \cmdYD \cmdarrow^{\simeq} {}_{ J^{\cmdcop} }^{ J^{\cmdcop} } \cmdYD $, which is involutive in the sense $ V^{tt} = V $.
This is a variation of (and is in fact essentially the same as) the isomorphism $ \displaystyle {}_J^J \cmdYD \cmdarrow^{\simeq} {}_{ J^{\cmdop} }^{ J^{\cmdop} } \cmdYD $ given by Radford and Schneider \cite[Section 2]{RS}.

Suppose that $ R = ( R , m , \Delta ) $ is a braided bialgebra in $ {}_J^J \cmdYD $ with product $m$ and coproduct $ \Delta $.
Give on $ R^t $ the opposite product $ m^t $ defined by
\begin{equation*}
  m^t ( r \otimes s ) := m ( s \otimes r ) \quad ( r , s \in R ),
\end{equation*}
and the coproduct $ \Delta^t$ defined by
\begin{equation*}
  \Delta^t := \cmdtw \circ c_{R , R}^{-1} \circ \Delta,
\end{equation*}
where $c_{R , R}^{-1}$ is such as given by (\ref{eq1.4}), and $ \cmdtw$ denotes the twist map $r \otimes s \mapsto s \otimes r$.
Explicitly,
\begin{equation}
\label{eq1.12}
  \Delta^t ( r ) = ( \overline{ \cmdS } ( r_{( 2 ) J} ) \cmdrhu r_{( 1 )} ) \otimes r_{( 2 ) R} \quad ( r \in R ).
\end{equation}

\begin{proposition} \label{1.3}
  \begin{enumerate}
    \item Given the same unit and counit as the original ones, $ R^t = ( R^t , \Delta^t , m^t ) $ is a braided bialgebra in $ {}_{ J^{\cmdcop} }^{ J^{\cmdcop} } \cmdYD $.
    \item We have $ R^{tt} = R $, \ $ P ( R ) = P ( R^t ) $.
  \end{enumerate}
\end{proposition}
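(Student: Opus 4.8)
The plan is to prove part (1) not by checking the braided-bialgebra axioms for $(R^t,\Delta^t,m^t)$ by hand, but by identifying $R^t$ with the braided Hopf algebra $\overline{R}$ of Proposition~\ref{1.1}, whose structure is already in hand. Concretely, I would show that the pode $\overline{\cmdS}$ of $\cmdA$, restricted to $R$, is an isomorphism $\overline{\cmdS}:R^t\cmdarrow^{\simeq}\overline{R}$ in $ {}_{ J^{\cmdcop} }^{ J^{\cmdcop} } \cmdYD $ that is simultaneously an algebra and a coalgebra map. The coalgebra claim is read off by comparing the explicit formula (\ref{eq1.12}) for $\Delta^t$ with the coproduct of $\overline{R}$ in Proposition~\ref{1.1}(3): applying $\overline{\cmdS}\otimes\overline{\cmdS}$ to (\ref{eq1.12}) produces exactly $\Delta_{\overline{R}}(\overline{\cmdS}(r))$, so $(\overline{\cmdS}\otimes\overline{\cmdS})\circ\Delta^t=\Delta_{\overline{R}}\circ\overline{\cmdS}$. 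The algebra claim uses that $\overline{\cmdS}$ is an anti-automorphism of the algebra $\cmdA$ and that $R\hookrightarrow\cmdA$ and $\overline{R}\hookrightarrow\cmdA^{\cmdcop}=\cmdA$ both carry the restricted product; thus for $r,s\in R$ one has $\overline{\cmdS}(r)\,\overline{\cmdS}(s)=\overline{\cmdS}(sr)=\overline{\cmdS}(m^t(r\otimes s))$. Finally, Proposition~\ref{1.1}(2) is exactly the statement that $\overline{\cmdS}$ intertwines the module and comodule structures of $R^t$ and $\overline{R}$, so $\overline{\cmdS}$ is an isomorphism in the category.

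Granting this identification, part (1) follows formally: a categorical isomorphism that is both an algebra and a coalgebra map transports the braided-bialgebra axioms (the compatibility relating product, coproduct and braiding transports because $\overline{\cmdS}$ commutes with the braiding, by naturality), and $\overline{R}$ is a braided bialgebra by Proposition~\ref{1.1}. Compatibility of units and counits is immediate from $\overline{\cmdS}(1)=1$ and $\varepsilon\circ\overline{\cmdS}=\varepsilon$. Conceptually, this is the statement that $(-)^t$ realizes the passage to the mirror (reverse-braided) category, under which bialgebras go to bialgebras.

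For the first half of (2) I would verify $R^{tt}=R$ on each piece of structure. On underlying objects it is the involutivity $V^{tt}=V$ noted above; for the product, $m^{tt}(r\otimes s)=m^t(s\otimes r)=m(r\otimes s)$; and for the coproduct, writing $c^t$ for the braiding of $ {}_{ J^{\cmdcop} }^{ J^{\cmdcop} } \cmdYD $ one has $c^t=\cmdtw\circ c^{-1}\circ\cmdtw$, so $(c^t)^{-1}=\cmdtw\circ c\circ\cmdtw$ and a cancellation of consecutive twist maps turns $\Delta^{tt}=\cmdtw\circ(c^t)^{-1}\circ\Delta^t$ into $\Delta$. For the equality $P(R)=P(R^t)$ I would first check $P(R)\subseteq P(R^t)$ directly: the unit of $R$ has trivial action and coaction, so $c^{-1}$ is the plain flip on $r\otimes 1$ and on $1\otimes r$; hence if $\Delta(r)=r\otimes 1+1\otimes r$ then $\Delta^t(r)=\cmdtw\circ c^{-1}(r\otimes 1+1\otimes r)=r\otimes 1+1\otimes r$. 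Applying this inclusion to $R^t$ and using $R^{tt}=R$ gives $P(R^t)\subseteq P(R^{tt})=P(R)$, whence equality.

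The one delicate point is the coproduct comparison in part (1): one must line up the Sweedler-type subscripts in (\ref{eq1.12}) with those of Proposition~\ref{1.1}(3) and confirm term-by-term agreement after applying $\overline{\cmdS}\otimes\overline{\cmdS}$. Everything else is either the formal transport-of-structure argument, the involutivity $R^{tt}=R$, or the one-line unit computation for primitives.
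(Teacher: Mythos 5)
Your route to part (1) is genuinely different from the paper's, and within its range of validity it is correct: the paper invokes the fact (from \cite[Section 2]{RS}) that $V \mapsto V^t$ is a \emph{tensorial} category isomorphism $F : {}_J^J \cmdYD \cmdarrow^{\simeq} {}_{J^{\cmdcop}}^{J^{\cmdcop}} \cmdYD$, so that $F(R)$ is automatically a braided bialgebra, and then notes that $R^t$ is the braided opposite of $F(R)$; you instead transport the structure of $\overline{R}$ back along the pode. Your identification $\overline{\cmdS} : R^t \cmdarrow^{\simeq} \overline{R}$ is in fact the content of the paper's Proposition \ref{1.4}(2), which the paper proves \emph{after} Proposition \ref{1.3} (from Proposition \ref{1.1}(2),(3) and (\ref{eq1.12})), precisely because its statement presupposes that $R^t$ is already known to be a braided Hopf algebra. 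Your reorganization --- establish the bijection and its compatibilities first, then transport the axioms --- is not circular, and the individual computations are sound: the term-by-term match giving $(\overline{\cmdS} \otimes \overline{\cmdS}) \circ \Delta^t = \Delta_{\overline{R}} \circ \overline{\cmdS}$, the anti-automorphism argument for $m^t$, the use of Proposition \ref{1.1}(2) for membership of the map in the category, and the direct verification of $R^{tt} = R$ and $P(R) = P(R^t)$ (which is all the paper means by ``directly verified'' for part (2)).

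The genuine gap is one of generality. Proposition \ref{1.3} is stated for an \emph{arbitrary} braided bialgebra $R$ in ${}_J^J \cmdYD$: no grading, no connectedness, and no antipode are assumed (note that (\ref{eq1.12}) only uses the pode of $J$, so $\Delta^t$ makes sense in this generality). Your proof, by contrast, requires the pode $\overline{\cmdS}$ of $\cmdA = R \cmddotrtimes J$ and the object $\overline{R}$ of Proposition \ref{1.1}, and these exist only when $\cmdA$ is a Hopf algebra with bijective antipode --- in the paper this is guaranteed by the standing hypotheses of the subsection where $\overline{R}$ is introduced, namely that $R$ is graded with $R(0) = k$, so that the coradical of $\cmdA$ lies in $J = \cmdA(0)$. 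If $R$ is a braided bialgebra that is not a Hopf algebra, neither $\overline{\cmdS}$ nor $\overline{R}$ is available and your argument cannot start; even for a Hopf algebra $R$ one needs bijectivity of its antipode. So what you prove is the proposition restricted to the graded connected Hopf case --- admittedly the only case the paper later uses (Proposition \ref{1.4}, Lemma \ref{2.5}) --- but it is strictly weaker than the statement, whereas the paper's functorial argument uses nothing beyond the bialgebra axioms. To repair this, either add the hypothesis that $R$ is a (graded connected) braided Hopf algebra, or fall back on the tensoriality of $F$ as the paper does.
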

\begin{proof}
  (1) As is explained in \cite[Section 2]{RS}, the category isomorphism \\
  $ \displaystyle F : {}_J^J \cmdYD \cmdarrow^{\simeq} {}_{ J^{\cmdcop} }^{ J^{\cmdcop} } \cmdYD $,
  $ F ( V ) = V^t $ is tensorial with respect to $ \displaystyle \cmdid : k \rightarrow k = F ( k ) $, $\displaystyle c_{W , V} \circ \cmdtw : F ( V ) \otimes F ( W ) \cmdarrow^{\simeq} F ( V \otimes W ) $.
  Therefore, $F ( R )$ is a braided bialgebra in $ {}_{ J^{\cmdcop} }^{ J^{\cmdcop} } \cmdYD $.
  Our $R^t$ is the braided opposite to $F ( R )$.
  
  (2) This is directly verified.
\end{proof}

Obviously, this result is generalized to braided graded bialgebras.

\begin{proposition} \label{1.4}
  Suppose that we are in the same situation as in Propositions \ref{eq1.1}, \ref{eq1.2}.
  \begin{enumerate}
    \item $R$ is a pre-Nichols algebra (resp., the Nichols algebra) of $V$ if and only if $R^t$ is a pre-Nichols algebra (resp., the Nichols algebra) of $V^t$.
    \item The pode $ \overline{\cmdS}$ of $R \cmddotrtimes J$ gives isomorphisms
    \begin{equation*}
      R \cmdarrow^{\simeq} \overline{R}^t , \quad R^t \cmdarrow^{\simeq} \overline{R}
    \end{equation*}
    of braided graded Hopf algebras in ${}_J^J \cmdYD$,
    and in ${}_{ J^{\cmdcop} }^{ J^{\cmdcop} } \cmdYD$, respectively.
  \end{enumerate}
\end{proposition}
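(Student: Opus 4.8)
The plan is to read off part (1) from Proposition \ref{1.3}, and to prove part (2) by showing that the pode, which is already known to carry $R$ onto $\overline{R}$, intertwines the $t$-twisted structures on $R$ with the bar-structures on $\overline{R}$; the first isomorphism $R^t \cmdarrow^{\simeq} \overline{R}$ will do all the work, and the companion $R \cmdarrow^{\simeq} \overline{R}^t$ will follow by applying the involution $(-)^t$.

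First I would dispose of (1). The braided graded bialgebra $R^t$ has the same underlying graded vector space as $R$, with $n$-th component $R(n)^t$ in $ {}_{J^{\cmdcop}}^{J^{\cmdcop}} \cmdYD $; in particular $R^t(0)=k$ and $R^t(1)=V^t$ hold automatically. Since a subspace is closed under $m$ if and only if it is closed under the opposite product $m^t$, the subalgebra generated by $R^t(1)$ agrees, as a graded subspace, with that generated by $R(1)$, so that condition (\ref{eq1.7}) passes between $R$ and $R^t$. Hence $R^t$ is a pre-Nichols algebra of $V^t$ exactly when $R$ is one of $V$, the reverse implication coming from $R^{tt}=R$. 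For the Nichols case it suffices to combine $R^t(1)=R(1)$ with the identity $P(R^t)=P(R)$ of Proposition \ref{1.3}(2): the condition $R(1)=P(R)$ is literally the same subspace equality as $R^t(1)=P(R^t)$.

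For (2) I would establish the single isomorphism $ \overline{\cmdS} : R^t \cmdarrow^{\simeq} \overline{R} $ in $ {}_{J^{\cmdcop}}^{J^{\cmdcop}} \cmdYD $, where four compatibilities must be checked. The action on $R^t$ is the original $J$-action viewed over $J^{\cmdcop}$, so $J^{\cmdcop}$-linearity of $ \overline{\cmdS} $ is exactly the relation $ a^{\cmdcop} \cmdrhu \overline{\cmdS}(r) = \overline{\cmdS}(a \cmdrhu r) $ of Proposition \ref{1.1}(2). Applying $ \cmdid \otimes \overline{\cmdS} $ to the $R^t$-coaction $ r \mapsto \overline{\cmdS}(r_J) \otimes r_R $ reproduces the coaction $ \overline{\cmdS}(r) \mapsto \overline{\cmdS}(r_J) \otimes \overline{\cmdS}(r_R) $ of the same statement, giving colinearity. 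Since $R$ sits in $ \cmdA $ as the subalgebra $R\cdot 1$, on which the product of $ \cmdA $ restricts to the braided product of $R$, and since the pode is an anti-algebra endomorphism of $ \cmdA $, one has $ \overline{\cmdS}(m(s\otimes r)) = \overline{\cmdS}(r)\,\overline{\cmdS}(s) $; as $m^t(r\otimes s)=m(s\otimes r)$, this says precisely that $ \overline{\cmdS} $ carries the product of $R^t$ to that of $ \overline{R} $.

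The coproduct compatibility is the main obstacle, and indeed the real content of the statement, since there the two independently defined twists must be shown to coincide. I would set the explicit formula (\ref{eq1.12}),
\begin{equation*}
  \Delta^t(r) = ( \overline{\cmdS}(r_{(2)J}) \cmdrhu r_{(1)} ) \otimes r_{(2)R},
\end{equation*}
beside the coproduct of $ \overline{R} $ from Proposition \ref{1.1}(3),
\begin{equation*}
  \Delta_{\overline{R}}( \overline{\cmdS}(r) ) = \overline{\cmdS}( \overline{\cmdS}(r_{(2)J}) \cmdrhu r_{(1)} ) \otimes \overline{\cmdS}(r_{(2)R}),
\end{equation*}
and observe that applying $ \overline{\cmdS} \otimes \overline{\cmdS} $ to the former yields the latter term by term. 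With all four compatibilities in hand, $ \overline{\cmdS} : R^t \to \overline{R} $ is a bijective morphism of braided graded Hopf algebras in $ {}_{J^{\cmdcop}}^{J^{\cmdcop}} \cmdYD $, graded because the pode of the graded Hopf algebra $ \cmdA $ preserves degree. Finally, applying the involutive functor $(-)^t$, which is the identity on underlying linear maps and sends $R^t \mapsto R$ and $ \overline{R} \mapsto \overline{R}^t $, converts this into the companion isomorphism $ \overline{\cmdS} : R \cmdarrow^{\simeq} \overline{R}^t $ in $ {}_J^J \cmdYD $.
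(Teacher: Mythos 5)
Your proof is correct and takes essentially the same route as the paper: part (1) is read off from Proposition~\ref{1.3}(2), and part (2) from Proposition~\ref{1.1}(2),(3) together with formula (\ref{eq1.12}), which are precisely the ingredients the paper's (very terse) proof cites; you have simply unpacked the verification of the four compatibilities and the passage from $R^t \cmdarrow^{\simeq} \overline{R}$ to $R \cmdarrow^{\simeq} \overline{R}^t$ via the involution $(-)^t$ in full detail. No gaps to report.
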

\begin{proof}
  (1) This follows from Proposition \ref{1.3} (2).
  
  (2) This follows from Proposition \ref{1.1} (2), (3) and (\ref{eq1.12}).
\end{proof}

Part(2) above refines the isomorphism given in (\ref{eq1.11}), more conceptually.

\subsection{}
For a vector space $X$, we let $X^* = \cmdHom ( X , k )$ denote the linear dual,
and $ \langle \ , \ \rangle : X^* \times X \rightarrow k$ the evaluation map.

Let $J^{\circ}$ denote the dual Hopf algebra \cite[Section 6.2]{Sw} of $J$, which also has a bijective antipode.
If $V \in {}_J^J \cmdYD$ is finite-dimensional,
$V^*$ is in ${}_{ J^{\circ} }^{ J^{\circ} } \cmdYD$ with respect to the structure $p \cmdrhu v^* $, $v^* \mapsto v_{J^{\circ}}^{*} \otimes v_{V^*}^{*}$ determined by
\begin{eqnarray*}
  & \langle p \cmdrhu v^* , v \rangle = \langle p , v_J \rangle \langle v^* , v_V \rangle , & \\
  & \langle v_{ J^{\circ} }^{*} , a \rangle \langle v_{V^*}^{*} , v \rangle = \langle v^* , a \cmdrhu v \rangle , &
\end{eqnarray*}
where $v \in V $, $ v^* \in V^* $, $ a \in J $, $ p \in J^{\circ} $,
Moreover, $V \mapsto V^*$ gives a braided tensor contravariant-functor from the finite-dimensional objects in ${}_J^J \cmdYD$ to those objects in ${}_{ J^{\circ} }^{ J^{\circ} } \cmdYD$,
where the tensorial structure is given by the canonical isomorphisms $ \displaystyle k \cmdarrow^{\simeq} k^* $, $ \displaystyle V^* \otimes W^* \cmdarrow^{\simeq} ( V \otimes W )^{*}$.

Let $ R = \bigoplus_{n = 0}^{\infty} R ( n )$ be a braided graded Hopf algebra in ${}_J^J \cmdYD$ such that $R ( 0 ) = k$.
Following \cite[Section 11.2]{Sw}, we define the {\it graded dual} $R^g$ of $R$ by
\begin{equation*}
  R^g = \bigoplus_{n = 0}^{\infty} R ( n )^{*}.
\end{equation*}
This is a subalgebra of the dual algebra $R^*$ of the coalgebra $R$.

Assume that $R$ is {\it locally finite} \cite[ibidem]{Sw} in the sense that each component $ R ( n ) $ is finite-dimensional.
Since each $ R ( n )^{*} $ is then an object in $ {}_{ J^{\circ} }^{ J^{\circ} } \cmdYD $, $ R^g $ is, too.
We see now easily the following.

\begin{lemma} \label{1.5}
  Under the assumption above,
  $R^g$ is a braided graded Hopf algebra in $ {}_{ J^{\circ} }^{ J^{\circ} } \cmdYD $ such that $ R^g ( 0 ) = k $.
\end{lemma}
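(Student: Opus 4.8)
The plan is to obtain all the structure on $R^g$ by dualizing that on $R$, using the braided tensor contravariant-functor $(-)^*$ recorded in the preceding subsection. The essential point is that for locally finite graded objects this functor is compatible with the grading and converts the graded tensor product into the graded tensor product of graded duals.

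First I would observe that since each $R(n)$ is finite-dimensional, the degree-$n$ component of $R \otimes R$, namely $\bigoplus_{p+q=n} R(p)\otimes R(q)$, is finite-dimensional, and the canonical isomorphisms $V^*\otimes W^* \cmdarrow^{\simeq} (V\otimes W)^*$ assemble into an isomorphism
\[
(R\otimes R)^g \;=\; \bigoplus_{n}\Bigl(\bigoplus_{p+q=n} R(p)\otimes R(q)\Bigr)^{*} \;\cmdarrow^{\simeq}\; R^g\otimes R^g
\]
in ${}_{J^\circ}^{J^\circ}\cmdYD$. Under this identification, dualizing the (graded) product $m : R\otimes R\to R$ yields a morphism $\Delta_{R^g}:R^g\to R^g\otimes R^g$, and dualizing the coproduct $\Delta:R\to R\otimes R$ yields a product $m_{R^g}:R^g\otimes R^g\to R^g$; the unit and counit of $R$ dualize respectively to the counit and unit of $R^g$. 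Because $R(0)=k$, we get $R^g(0)=R(0)^*\cong k$, which is the asserted normalization, and in particular $R^g$ is connected.

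Next I would check the braided bialgebra axioms. Associativity of $m$, coassociativity of $\Delta$, the (co)unit axioms, and the compatibility asserting that $\Delta$ is an algebra morphism for the braided tensor-product algebra structure on $R\otimes R$ are all commutative diagrams of morphisms in ${}_J^J\cmdYD$. Applying the contravariant functor $(-)^g$ reverses every arrow and interchanges $m\leftrightarrow\Delta$ and the unit with the counit, turning each axiom for $R$ precisely into the corresponding axiom for $R^g$. Since the functor is \emph{braided}---it carries the braiding $c_{V,W}$ to the braiding of ${}_{J^\circ}^{J^\circ}\cmdYD$ through its tensorial structure---the compatibility diagram, which is the only one involving $c$, transports correctly; this is exactly where the braided, rather than merely tensor, nature of $(-)^*$ is used. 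Hence $(R^g, m_{R^g}, \Delta_{R^g})$ is a braided graded bialgebra in ${}_{J^\circ}^{J^\circ}\cmdYD$ with $R^g(0)=k$.

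Finally, being a connected graded bialgebra, $R^g$ has an antipode (built by the standard recursion on degree, or equally obtained by dualizing the antipode of $R$), so it is a braided graded Hopf algebra. The step demanding the most care is the first: verifying that the graded dual genuinely converts the graded tensor product into the tensor product of graded duals, so that $\Delta_{R^g}$ lands in $R^g\otimes R^g$ and not merely in the larger full dual $(R\otimes R)^*$. This is precisely what local finiteness guarantees, and once it is in place the remaining verifications are formal consequences of functoriality.
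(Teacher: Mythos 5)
Your proposal is correct and takes essentially the same route the paper intends: the paper dismisses the proof as easy ("We see now easily the following") precisely because it has just set up the braided tensor contravariant-functor $V \mapsto V^*$ with its tensorial structure $V^* \otimes W^* \cmdarrow^{\simeq} (V \otimes W)^*$ and observed that local finiteness places $R^g$ in ${}_{J^{\circ}}^{J^{\circ}}\cmdYD$. Your write-up simply makes explicit the verification the paper leaves to the reader — identifying $(R\otimes R)^g$ with $R^g \otimes R^g$, dualizing $m$ and $\Delta$, transporting the axioms through the braided contravariant functor, and obtaining the antipode from connectedness.
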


\subsection{}
Given distinct Hopf algebras $J$, $K$ with bijective antipode, suppose $ V \in {}_J^J \cmdYD $, $ W \in {}_K^K \cmdYD $.
A linear map $ \psi : V \rightarrow W $ is called a {\it map of braided vector spaces} if there is a Hopf algebra map $ \phi : J \rightarrow K$ with which $ \psi $ is $ \phi $-linear and colinear \cite{RS} in the sense that
\begin{eqnarray}
  \psi ( a \cmdrhu v ) &=& \phi ( a ) \cmdrhu \psi ( v ), \label{eq1.13} \\
  \phi ( v_J ) \otimes \psi ( v_V ) &=& \psi ( v )_K \otimes \psi ( v )_W, \label{eq1.14}
\end{eqnarray}
where $ a \in J $, $ v \in V $.
In this case, to specify $ \phi $, we will say that $ \psi $ is {\it attended by} $ \phi$.
Notice that $ \psi$ then preserves the braiding in the obvious sense.
The definition above extends in the obvious way to braided (graded) bi- or Hopf algebras.
For example in the situation of Lemma \ref{1.5},
the canonical isomorphism $ \displaystyle R \cmdarrow^{\simeq} ( R^g )^g $ is a map of braided graded Hopf algebras,
attended by the canonical Hopf algebra map $ J \rightarrow ( J^{\circ} )^{\circ} $.

\section{Skew pairings on pre-Nichols algebras}

\subsection{}

Let $A$, $X$ be vector spaces.
Given a linear form $ \tau : A \otimes X \rightarrow k $, we denote the adjoint linear maps by
\begin{eqnarray*}
  \tau^l &:& A \rightarrow X^* , \quad \tau^l ( a ) ( x ) = \tau ( a , x ) ,\\
  \tau^r &:& X \rightarrow A^* , \quad \tau^r ( x ) ( a ) = \tau ( a , x ) ,
\end{eqnarray*}
where $ a \in A $, $ x \in X $.
For subspaces $ B \subset A $, $ Y \subset X $, we let
\begin{equation*}
  \tau_{ B Y } := \tau \mid _{ B \otimes Y } : B \otimes Y \rightarrow k
\end{equation*}
denote the restriction.

Suppose that $A$, $X$ are Hopf algebras with bijective antipode.
A linear form $ \tau : A \otimes X \rightarrow k $ is called a {\it skew pairing} \cite[Definition 1.3]{DT}, if
\begin{eqnarray*}
  & \tau ( a b , x ) = \tau ( a , x_1 ) \tau ( b , x_2 ) , & \\
  & \tau ( a , x y ) = \tau ( a_1 , y ) \tau ( a_2 , x ) , & \\
  & \tau ( 1 , x ) = \varepsilon ( x ) , \quad \tau ( a , 1 ) = \varepsilon ( a ) &
\end{eqnarray*}
for all $ a , b \in A , x , y \in X $, or equivalently if $ \tau^l $ gives a Hopf algebra map $ A^{\cmdcop} \rightarrow X^{\circ} $,
or equivalently if $ \tau^r $ gives a Hopf algebra map $ X \rightarrow ( A^{\cmdcop} ) ^{\circ} $.
A skew pairing $ \tau $ necessarily has a convolution-inverse $ \tau^{-1} $, such that
\begin{equation*}
  \tau^{-1} ( a , x ) = \tau ( \overline{ \cmdS } ( a ) , x ) = \tau ( a , \cmdS ( x ) ) \quad ( a \in A , x \in X ).
\end{equation*}

\subsection{}

In what follows until the end of Section 3, We will work in the following situation.
First, let $J$, $K$ be Hopf algebras with bijective antipode,
and suppose that a skew pairing $ \tau_{0} : J \otimes K \rightarrow k $ is given.
Next, let $ V \in {}_J^J \cmdYD $, $ W \in {}_K^K \cmdYD $, and suppose that $ \tau_1 : V \otimes W \rightarrow k $ is a linear form such that
\begin{eqnarray} 
  \tau_1 ( a \cmdrhu v , w ) &=& \tau_0 ( a , w_K ) \tau_1 ( v , w_W ), \label{eq2.1} \\
  \tau_1 ( v , x \cmdrhu w ) &=& \tau_0 ( v_J , \cmdS ( x ) ) \tau_1 ( v_V , w ) \label{eq2.2}
\end{eqnarray}
for all $ a \in J $, $ x \in K $, $ v \in V $, $ w \in W $.
If $W$ is finite-dimensional, these last conditions are equivalent to that $ \tau_1^l $ gives a map $ V^t \rightarrow W^* $ of braided vector spaces,
attended by $ \tau_0^l : J^{ \cmdcop } \rightarrow K^{ \circ } $.
Similarly we have another equivalent condition if $V$ is finite-dimensional; see \cite[Proposition 4.2]{RS}.
Finally, let $R$, $S$ be pre-Nichols algebras of $V$, $W$, respectively.

\begin{proposition} \label{2.1}
  $ \tau_0 $, $ \tau_1 $ extend uniquely to a skew pairing $ \tau : ( R \cmddotrtimes J ) \otimes ( S \cmddotrtimes K ) \rightarrow k $ such that
  \begin{equation} \label{eq2.3}
    \tau ( a , w ) = 0= \tau ( v , x )
  \end{equation}
  for all $ a \in J $, $ x \in K $, $ v \in V $, $ w \in W $.
\end{proposition}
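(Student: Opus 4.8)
The plan is to reduce the whole statement to the construction of a single \emph{braided} skew pairing $\tau_R\colon R\otimes S\to k$ extending $\tau_1$, to build $\tau_R$ on the tensor algebras, and then to push it down to the pre-Nichols quotients using the quantum symmetrizer. Uniqueness I would dispose of first. Since $R$ is generated by $R(1)=V$, the bosonization $\cmdA=R\cmddotrtimes J$ is generated as an algebra by $V$ and $J$, and likewise $S\cmddotrtimes K$ is generated by $W$ and $K$. The two multiplicativity axioms of a skew pairing, $\tau(\alpha\beta,\xi)=\tau(\alpha,\xi_1)\tau(\beta,\xi_2)$ and $\tau(\alpha,\xi\eta)=\tau(\alpha_1,\eta)\tau(\alpha_2,\xi)$, express the value of $\tau$ on any pair of products in terms of its values on pairs of algebra generators, i.e.\ on $(V+J)\otimes(W+K)$. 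On these four pieces $\tau$ is prescribed to be $\tau_1$, $\tau_0$, and $0$ (on $V\otimes K$ and on $J\otimes W$) by (\ref{eq2.3}); hence $\tau$ is unique if it exists.

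For existence I would first produce, alongside $\tau_0$, a braided skew pairing $\tau_R\colon R\otimes S\to k$ extending $\tau_1$: a form compatible with the braided coproducts $\Delta_R,\Delta_S$ and with the Yetter--Drinfeld structures through $\tau_0$ and (\ref{eq2.1})--(\ref{eq2.2}). Given such a $\tau_R$, one defines $\tau$ on $\cmdA\otimes(S\cmddotrtimes K)$ by the expected formula dictated by the bosonization coproduct $\Delta_{\cmdA}(ra)=r_{(1)}(r_{(2)J}a_1)\otimes r_{(2)R}a_2$, combining $\tau_0$, $\tau_R$ and the comodule maps; the vanishing (\ref{eq2.3}) and the degree compatibility $\tau(R(n)J,S(m)K)=0$ for $n\neq m$ then fall out of the construction. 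Under this dictionary the skew pairing axioms for $\tau$ become exactly the braided skew pairing axioms for $\tau_R$ together with the compatibilities above, and the dictionary is reversible, so it suffices to produce $\tau_R$.

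To build $\tau_R$ I would start on the tensor algebras $T(V)$, $T(W)$, where the braided pairing extending $\tau_1$ exists and is unique: in degree $n$ it is $\tau_1^{\otimes n}$ twisted by the braiding, so that it factors through the quantum symmetrizer $\mathfrak S_n$ on $V^{\otimes n}$ (equivalently on $W^{\otimes n}$, the two agreeing because $\tau_1$ intertwines the braidings by (\ref{eq2.1})--(\ref{eq2.2})). Consequently its left radical contains $\bigoplus_n\cmdKer\mathfrak S_n=I_{\max}(V)$, the defining ideal of the Nichols algebra of $V$, and its right radical contains $I_{\max}(W)$. Since $R=T(V)/I$ and $S=T(W)/I'$ are pre-Nichols algebras, the ideals $I$, $I'$ satisfy $I\cap V=0$, $I'\cap W=0$, hence are contained in the largest such ideals $I_{\max}(V)$, $I_{\max}(W)$; therefore the braided pairing descends to a well-defined $\tau_R\colon R\otimes S\to k$. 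This is precisely where the pre-Nichols hypothesis is used, and note that no non-degeneracy of $\tau_1$ is needed for the descent, only the containment $I\subset I_{\max}(V)$.

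The main obstacle I anticipate is the bookkeeping in the existence half: checking that the explicit formula for $\tau$ satisfies both multiplicativity axioms requires repeatedly applying the bosonization product and coproduct and the Yetter--Drinfeld compatibilities (\ref{eq1.3})--(\ref{eq1.4}), (\ref{eq2.1})--(\ref{eq2.2}), while carefully tracking the braidings. When $V$ and $W$ are finite-dimensional this can be cross-checked against the functorial picture: by the remark following (\ref{eq2.2}), $\tau_1^l$ is a map $V^t\to W^*$ of braided vector spaces attended by $\tau_0^l\colon J^{\cmdcop}\to K^\circ$, and such a map extends along the (pre-)Nichols functors (cf.\ Proposition \ref{1.3}) and bosonizes to the Hopf algebra map that, by the characterization in Section 2.1, is exactly $\tau^l$. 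Since Proposition \ref{2.1} assumes no finiteness, the symmetrizer argument above is the robust route, and the verification of the axioms is the only genuinely laborious step.
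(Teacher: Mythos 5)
Your proposal is correct in outline, but it takes a genuinely different route from the paper: the paper's entire proof of Proposition \ref{2.1} is a citation of \cite[Theorem 5.3]{M2} (with the sign translation $\tau_1(v,w)=-\lambda(w,v)$), together with a pointer to \cite[Theorem 8.3]{RS}; moreover, as Remark \ref{3.3} explains, the cited proof in \cite{M2} is not a direct construction at all --- it first builds a bicleft extension over $k$ and then shows that the associated $2$-cocycle $\sigma(\alpha\otimes\xi,\beta\otimes\eta)=\varepsilon(\alpha)\tau(\beta,\xi)\varepsilon(\eta)$ arises from a skew pairing of the required form. Your plan (uniqueness from generation; existence by building the pairing on $T(V)\otimes T(W)$, descending it to the pre-Nichols quotients, and transporting it to the bosonizations) is instead the direct approach, essentially a reconstruction of \cite[Theorem 8.3]{RS}. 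Your uniqueness argument is sound once you add one observation you glossed over: by (\ref{eq1.8}), $V+J$ and $W+K$ are \emph{subcoalgebras} of $\cmdA=R\cmddotrtimes J$ and $\cmdH=S\cmddotrtimes K$, so the double reduction (first in the left slot by one multiplicativity axiom, then in the right slot by the other) really does terminate at values on $(V+J)\otimes(W+K)$; as stated, ``values on products are expressed in terms of values on generators'' is not literally true, because coproducts intervene.

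Two caveats on the existence half, which is where the real content lies. First, the ``dictionary'' you defer as bookkeeping is the heart of the matter: since $R$ lives in ${}_J^J\cmdYD$ and $S$ in ${}_K^K\cmdYD$, there is no naive notion of a pairing of braided Hopf algebras here; the correct axioms for your $\tau_R$ are forced to be the $\tau_0$-twisted ones that the paper records a posteriori as (\ref{eq2.5})--(\ref{eq2.6}), and verifying that the bosonized formula $\tau(ra,sx)=\tau_0(r_J,x_1)\,\tau_R(r_R,s)\,\tau_0(a,x_2)$ (compare (\ref{eq2.4})) satisfies the skew-pairing axioms is exactly the computation carried out in \cite{RS}. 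Until that is written out, your argument is a plan rather than a proof. Second, your descent step imports two external theorems (that the tensor-algebra pairing factors through the quantum symmetrizer, and that $\bigoplus_n\cmdKer\mathfrak{S}_n$ is the defining ideal of the Nichols algebra); these are standard and citable, e.g.\ from \cite{AS}, but you can avoid them entirely: since $T(W)$ is generated in degree one, the image of $\tau^l$ is a \emph{strictly graded} coalgebra, so any homogeneous bi-ideal of $T(V)$ with zero degree-one part --- in particular the defining ideal $I$ of $R$ --- is killed by $\tau^l$, because a nonzero homogeneous coideal concentrated in degrees $\geq 2$ would contain a nonzero primitive of degree $\geq 2$. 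This is the same strict-gradedness trick the paper itself uses in the proof of Proposition \ref{2.6}, and it makes the descent self-contained. With these two points filled in, your route yields a complete proof; what it buys over the paper's one-line citation is self-containedness, at the cost of redoing the work of \cite{M2} and \cite{RS}.
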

\begin{proof}
  This follows by \cite[Theorem 5.3]{M2} if we take our $ \tau_1 ( v , w ) $ as $ - \lambda ( w , v ) $ in \cite{M2}.
  See also \cite[Theorem 8.3]{RS}.
\end{proof}

Conversely, if $ \tau $ is a skew pairing which extends $ \tau_0 $, and satisfies (\ref{eq2.3}),
it restricts to the linear form $ \tau_{ V W } $ which satisfies (\ref{eq2.1}) (\ref{eq2.2}), as is easily seen.
In what follows we keep to denote by $ \tau $ the skew pairing as above.

\begin{lemma} \label{2.2}
  If $ a \in K $, $ x \in K $, $ r , r' \in R $, $ s , s' \in S $,
  then we have
  \begin{eqnarray} 
    \tau ( r a , s x ) &=& \tau ( r_J , x_1 ) \tau ( r_R , s ) \tau ( a , x_2 ) , \label{eq2.4} \\
    \tau ( r , s s' ) &=& \tau ( r_{( 1 )} , s' ) \tau ( r_{( 2 )} , s ) , \label{eq2.5} \\
    \tau ( r r' , s ) &=& \tau ( r , \overline{ \cmdS } ( s_{(2) K} ) \cmdrhu s_{( 1 )} ) \tau ( r' , s_{( 2 ) S} ) , \label{eq2.6}
  \end{eqnarray}
\end{lemma}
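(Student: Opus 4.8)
The plan is to reduce all three identities to the defining axioms of the skew pairing $\tau$, combined with the Radford biproduct coproduct formula for $S\cmddotrtimes K$ and the degree-one vanishing (\ref{eq2.3}); the workhorse will be a grading lemma that collapses mixed pairings to counits. Concretely, I would first establish that $\tau(a, s) = \varepsilon(a)\varepsilon(s)$ for $a\in J$, $s\in S$, and dually $\tau(r, x) = \varepsilon(r)\varepsilon(x)$ for $r\in R$, $x\in K$. Both are proved by induction on the degree, using that $R$ and $S$ are generated in degree one, the comultiplicativity of $\tau$, and (\ref{eq2.3}); for instance $\tau(a, ws') = \tau(a_1, s')\tau(a_2, w) = 0$ whenever $w\in W$, since $\tau(a_2, w) = 0$.

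I would then prove the special case $\tau(r, sx) = \tau(r_J, x)\tau(r_R, s)$ of (\ref{eq2.4}), i.e. the case $a = 1$. Comultiplicativity in the second argument gives $\tau(r, sx) = \tau(r_{(1)}r_{(2)J}, x)\tau(r_{(2)R}, s)$; expanding the first factor and collapsing $\tau(r_{(1)}, x_1) = \varepsilon(r_{(1)})\varepsilon(x_1)$ by the grading lemma, together with the comodule counit, yields the claim. The full identity (\ref{eq2.4}) then follows by applying $\tau(\alpha\beta,\gamma) = \tau(\alpha,\gamma_1)\tau(\beta,\gamma_2)$ to $ra = r\cdot a$ and expanding $\Delta(sx) = s_{(1)}(s_{(2)K}x_1)\otimes s_{(2)S}x_2$: the factor $\tau(a, s_{(2)S}x_2)$ reduces, via the grading lemma and the comodule counit, to $\varepsilon(s_{(2)S})\tau(a, x_2)$, which collapses the whole expression to $\tau(r, sx_1)\tau(a, x_2)$, to which the special case applies.

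Identity (\ref{eq2.5}) follows from (\ref{eq2.4}) alone: comultiplicativity in the second argument gives $\tau(r, ss') = \tau(r_{(1)}r_{(2)J}, s')\tau(r_{(2)R}, s)$, and applying (\ref{eq2.4}) to the first factor, followed by the comodule counit identities $\varepsilon(r_{(1)J})r_{(1)R} = r_{(1)}$ and $\varepsilon(r_{(2)J})r_{(2)R} = r_{(2)}$, leaves $\tau(r_{(1)}, s')\tau(r_{(2)}, s)$. For (\ref{eq2.6}) I would dualize, using comultiplicativity in the first argument and the case $a=1$ of (\ref{eq2.4}) to get $\tau(rr', s) = \tau(r_J, s_{(2)K})\tau(r_R, s_{(1)})\tau(r', s_{(2)S})$, so that it remains to show $\tau(r, \overline{\cmdS}(y)\cmdrhu s') = \tau(r_J, y)\tau(r_R, s')$ for $y\in K$. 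The key step is the identity $\tau(r, y\cmdrhu s') = \tau(r, s'\cmdS(y))$: writing the action as the conjugation $y\cmdrhu s' = y_1 s'\cmdS(y_2)$ in $S\cmddotrtimes K$ and using comultiplicativity in the second argument, the factor pairing an $R$-component against $y_1\in K$ collapses by the grading lemma and the comodule counit, leaving exactly $\tau(r, s'\cmdS(y))$. Applying the case $a=1$ of (\ref{eq2.4}) then gives $\tau(r, y\cmdrhu s') = \tau(r_J, \cmdS(y))\tau(r_R, s')$, and replacing $y$ by $\overline{\cmdS}(y)$ yields the required form for (\ref{eq2.6}). This last identity generalizes (\ref{eq2.2}) from degree one to all degrees.

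I expect the main obstacle to be organizational rather than conceptual: one must keep the nested Sweedler indices straight across the Radford coproduct, which couples the $R/J$ and $S/K$ factors through the Yetter-Drinfeld comodule structure, and check at each step that the grading-induced counit collapses the intended tensor leg. Conceptually the content is light, since (\ref{eq2.5}) and (\ref{eq2.6}) are precisely the two braided skew-pairing axioms for the restriction $\tau\mid_{R\otimes S}$, the braiding correction in (\ref{eq2.6}) being exactly the opposite braided coproduct $\Delta^t_S$ recorded in (\ref{eq1.12}).
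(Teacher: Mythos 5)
Your proposal is correct and takes essentially the same route as the paper's own (very terse) proof: first the collapse identities $\tau(a,s)=\varepsilon(a)\varepsilon(s)$, $\tau(r,x)=\varepsilon(r)\varepsilon(x)$, then the mixed special cases $\tau(ra,s)=\varepsilon(a)\tau(r,s)$ and $\tau(r,sx)=\tau(r_J,x)\tau(r_R,s)$ (the paper's (\ref{eq2.7}), (\ref{eq2.8})), from which (\ref{eq2.4})--(\ref{eq2.5}) follow, with (\ref{eq2.6}) obtained via the same key identity $\tau(r, x\cmdrhu s)=\tau(r_J,\cmdS(x))\tau(r_R,s)$ that the paper derives from (\ref{eq2.8}). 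Your write-up simply fills in the Sweedler-index details that the paper leaves as ``step by step.''
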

\begin{proof}
  We reach these formulae by proving the following, step by step.
  \begin{eqnarray}
    & \tau ( a , s ) = \varepsilon ( a ) \varepsilon ( s ) , \quad \tau ( r , x ) = \varepsilon ( r ) \varepsilon ( x ), & \nonumber \\
    & \tau ( r a , s ) = \varepsilon ( a ) \tau ( r , s ) , \label{eq2.7} & \\
    & \tau ( r , s x ) = \tau ( r_J , x ) \tau ( r_R , s ) , \label{eq2.8} &
  \end{eqnarray}
  To prove (\ref{eq2.6}), notice from (\ref{eq2.8}) that
  \begin{equation*}
    \tau ( r , x \cmdrhu s ) = \tau ( r_J , \cmdS ( x ) ) \tau ( r_R , s ).
  \end{equation*}
\end{proof}

\begin{lemma} \label{2.3}
  Let $ \overline{\cmdS} $ denote the pode of $ R \cmddotrtimes J $.
  If $ a \in J $, $ x \in K $, $ r , r' \in R $, $ s , s' \in S $,
  then we have
  \begin{eqnarray}
    \tau ( \overline{ \cmdS } ( r ) a , s x ) &=& \tau ( \overline{ \cmdS } ( r ) , s ) \tau ( a , x ) , \label{eq2.9} \\
    \tau ( \overline{ \cmdS } ( r ) \overline{ \cmdS } ( r' ) , s ) &=& \tau ( \overline{ \cmdS } ( r ) , s_{( 1 )} ) \tau ( \overline{ \cmdS } ( r' ) , s_{( 2 )} ) ,  \label{eq2.10} \\
    \tau ( \overline{ \cmdS } ( r ) , s s' ) &=& \tau ( \overline{ \cmdS } ( \overline{ \cmdS } ( r_{( 2 ) J } ) \cmdrhu r_{( 1 )} ) , s ) \tau ( \overline{ \cmdS } ( r_{( 2 ) R} ) , s' ).  \label{eq2.11}
  \end{eqnarray}
\end{lemma}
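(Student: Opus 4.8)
The plan is to read (\ref{eq2.9})--(\ref{eq2.11}) as the $\overline{\cmdS}$-twisted counterparts of the formulae in Lemma \ref{2.2}, and to derive them from the skew-pairing axioms together with the explicit coproduct (\ref{eq1.10}) of $\overline{\cmdS}(r)$ in $\cmdA = R \cmddotrtimes J$. Throughout I would use that $\overline{\cmdS}$ is anti-multiplicative and anti-comultiplicative, that $J$ and $K$ sit inside $\cmdA$ and $\cmdB = S \cmddotrtimes K$ as Hopf subalgebras on which the antipode and pode restrict, and the convolution-inverse identity $\tau(\overline{\cmdS}(\alpha), \xi) = \tau(\alpha, \cmdS(\xi))$ for $\alpha \in \cmdA$, $\xi \in \cmdB$. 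Crucially, I would avoid expanding $\cmdS(ss')$ directly, since $\cmdS(s)$ lands in $\overline{S}$ rather than $S$; instead I route everything through the explicit coproduct and the base cases of Lemma \ref{2.2}.

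The first and decisive step is an auxiliary vanishing lemma: for $r \in R$, $s \in S$, $x \in K$,
\begin{equation*}
  \tau(\overline{\cmdS}(r), x) = \varepsilon(r)\varepsilon(x), \qquad \tau(\overline{\cmdS}(r), s x) = \varepsilon(x)\,\tau(\overline{\cmdS}(r), s).
\end{equation*}
The first identity follows at once from $\tau(\overline{\cmdS}(r), x) = \tau(r, \cmdS(x))$, the fact that $\cmdS(x) \in K$, and the base case $\tau(r, x) = \varepsilon(r)\varepsilon(x)$ proved inside Lemma \ref{2.2}. For the second, I would apply the second-slot skew axiom $\tau(\overline{\cmdS}(r), sx) = \tau(\overline{\cmdS}(r)_{1}, x)\,\tau(\overline{\cmdS}(r)_{2}, s)$, insert (\ref{eq1.10}) to read off $\overline{\cmdS}(r)_{1} = \overline{\cmdS}(r_{(2)R})$, replace $\tau(\overline{\cmdS}(r_{(2)R}), x)$ by $\varepsilon(r_{(2)R})\varepsilon(x)$, and collapse $\overline{\cmdS}(r)_{2}$ back to $\overline{\cmdS}(r)$ by the coaction counitality $r_{(2)J}\,\varepsilon(r_{(2)R}) = \varepsilon(r_{(2)})\,1$.

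With this vanishing in hand, (\ref{eq2.9}) and (\ref{eq2.10}) are short. For (\ref{eq2.9}) I would split the first slot, $\tau(\overline{\cmdS}(r)a, sx) = \tau(\overline{\cmdS}(r), (sx)_{1})\,\tau(a, (sx)_{2})$, expand $\Delta_{\cmdB}(sx)$ through the bosonization coproduct, kill the factor $\tau(a_{2}, s_{(2)S})$ by the base case $\tau(a, s) = \varepsilon(a)\varepsilon(s)$, and apply the vanishing lemma to the surviving cross term $\tau(\overline{\cmdS}(r), s\,x_{1})$; the coaction counits then contract everything to $\tau(\overline{\cmdS}(r), s)\,\tau(a, x)$. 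For (\ref{eq2.10}) I would split the first slot of $\tau(\overline{\cmdS}(r)\overline{\cmdS}(r'), s)$ into $\tau(\overline{\cmdS}(r), s_{(1)}s_{(2)K})\,\tau(\overline{\cmdS}(r'), s_{(2)S})$, then use the vanishing lemma and the coaction counit $\varepsilon(s_{(2)K})\,s_{(2)S} = s_{(2)}$ to reassemble it as $\tau(\overline{\cmdS}(r), s_{(1)})\,\tau(\overline{\cmdS}(r'), s_{(2)})$.

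Finally (\ref{eq2.11}) follows by splitting the second slot, $\tau(\overline{\cmdS}(r), ss') = \tau(\overline{\cmdS}(r)_{1}, s')\,\tau(\overline{\cmdS}(r)_{2}, s)$, and substituting (\ref{eq1.10}): the first factor is already $\tau(\overline{\cmdS}(r_{(2)R}), s')$, whereas the second factor has the form $\tau(\overline{\cmdS}(\overline{\cmdS}(r_{(2)J1})\cmdrhu r_{(1)})\,\overline{\cmdS}(r_{(2)J2}), s)$, to which (\ref{eq2.9}) applies with trivial $K$-slot, peeling off $\varepsilon(\overline{\cmdS}(r_{(2)J2}))$ and leaving $\tau(\overline{\cmdS}(\overline{\cmdS}(r_{(2)J})\cmdrhu r_{(1)}), s)$ after the counit contraction $r_{(2)J1}\,\varepsilon(r_{(2)J2}) = r_{(2)J}$. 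The main obstacle I anticipate is purely bookkeeping: the coproduct (\ref{eq1.10}) carries an iterated, braided Sweedler notation in $r_{(2)J1}, r_{(2)J2}, r_{(2)R}$, and the whole argument hinges on applying the coaction- and coalgebra-counit identities in exactly the right order to contract these indices back to $r_{(1)}, r_{(2)J}, r_{(2)R}$.
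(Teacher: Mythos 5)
Your proof is correct, but for (\ref{eq2.9}) and (\ref{eq2.10}) it takes a genuinely different route from the paper. The paper argues from generators: for $v \in V$ it writes $\overline{\cmdS}(v) = -v_V\,\overline{\cmdS}(v_J)$ explicitly, deduces from (\ref{eq2.8}) that $\tau(\overline{\cmdS}(v), sx) = -\varepsilon(x)\tau(v,s)$, and then proves by induction on $n$ the product formula
\begin{equation*}
  \tau(\overline{\cmdS}(v^1)\cdots\overline{\cmdS}(v^n), sx) = (-1)^n \varepsilon(x)\, \tau(v^1, s_{(1)})\cdots\tau(v^n, s_{(n)}),
\end{equation*}
from which (\ref{eq2.9}), (\ref{eq2.10}) follow because $R$ is generated by $R(1)$ (condition (\ref{eq1.7})). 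You instead prove the vanishing identity $\tau(\overline{\cmdS}(r), sx) = \varepsilon(x)\tau(\overline{\cmdS}(r), s)$ for \emph{arbitrary} $r \in R$ directly, from the convolution-inverse identity $\tau(\overline{\cmdS}(\alpha),\xi) = \tau(\alpha,\cmdS(\xi))$ together with the coproduct formula (\ref{eq1.10}), and then push (\ref{eq2.9}), (\ref{eq2.10}) through the skew-pairing axioms and the bosonization coproduct of $S \cmddotrtimes K$; I checked the contractions and they all go through. For (\ref{eq2.11}) the two proofs coincide: substitute (\ref{eq1.10}) and apply (\ref{eq2.9}). What the paper's route buys is the explicit formula (\ref{eq2.13}), which is reused in the proof of Corollary \ref{2.4}(2) to obtain the degree-orthogonality $\tau(\overline{R}(n), S(m)) = 0$ for $n \ne m$; what your route buys is a structural argument with no induction, no sign bookkeeping, and no iterated braided Sweedler indices beyond those already present in (\ref{eq1.10}). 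Two small points of hygiene: the identity you call ``coaction counitality,'' $r_{(2)J}\,\varepsilon(r_{(2)R}) = \varepsilon(r_{(2)})1$, is really the $J$-colinearity of the counit $\varepsilon_R$ (a morphism property in ${}_J^J\cmdYD$), not the counit axiom of the coaction, so it deserves its correct name; and your notation $\cmdB = S \cmddotrtimes K$ clashes with the paper, which reserves $\cmdB$ for the generalized $q$-boson algebra and writes $\cmdH$ for $S \cmddotrtimes K$.
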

\begin{proof}
  Suppose $ v \in V $. Then, $ \overline{ \cmdS } ( v ) = - v_V \overline{ \cmdS } ( v_J ) $.
  By (\ref{eq2.8}), we see
  \begin{equation} \label{eq2.12}
    \tau ( \overline{ \cmdS } ( v ) , s x ) = - \varepsilon ( x ) \tau ( v , x ) ,
  \end{equation}
  It follows by induction on $n$ that if $ v^1 , \cdots , v^n \in V $, then
  \begin{equation} \label{eq2.13}
    \tau ( \overline{ \cmdS } ( v^1 ) \cdots \overline{ \cmdS } ( v^n ) , s x ) = ( - 1 )^n \varepsilon ( x ) \tau ( v^1 , s_{( 1 )} ) \cdots \tau ( v^n , s_{( n )}) .
  \end{equation}
  This implies (\ref{eq2.9}), (\ref{eq2.10}); recall here (\ref{eq1.7}).
  We see that (\ref{eq2.11}) follows from (\ref{eq1.10}), (\ref{eq2.9}).
\end{proof}

Recall from \ref{eq1.3} that $ \overline{R} = \overline{ \cmdS } ( R ) $ in $ R \cmddotrtimes J $.

\begin{corollary} \label{2.4}
  \begin{enumerate}
    \item $ \tau $ is non-degenerate if and only if $ \tau_{ \overline{R} S } $ and $ \tau_0 ( = \tau_{J K} ) $ are both non-degenerate.
    \item $ \tau ( R ( n ) \otimes J , S ( m ) \otimes K ) = 0 $ if $ n \ne m $.
  \end{enumerate}
\end{corollary}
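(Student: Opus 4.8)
The plan is to establish both parts directly from the formulas collected in Lemmas \ref{2.2}, \ref{2.3}, exploiting the decomposition $\cmdA^{\cmdcop} = \overline{R} \cmddotrtimes J^{\cmdcop}$ recorded before Proposition \ref{1.1}. For part (2), I would first reduce to generators. By (\ref{eq1.7}) both $R$ and $S$ are generated in degree $1$, so every element of $R(n)$ is a sum of products $v^1 \cdots v^n$ with $v^i \in V = R(1)$, and similarly for $S(m)$. I would then argue that $\tau$ is graded in the sense that $\tau(R(n)\otimes J, S(m)\otimes K) = 0$ unless $n = m$, by induction on, say, $m$. The base case $m = 0$ says $\tau(R(n)\otimes J, K) = 0$ for $n > 0$, which follows from (\ref{eq2.7}) together with $\tau(r, x) = \varepsilon(r)\varepsilon(x)$ from the proof of Lemma \ref{2.2}, since $\varepsilon$ vanishes on $R(n)$ for $n > 0$. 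For the inductive step I would use (\ref{eq2.5}): writing $s s'$ with $s \in S(1)$ and $s' \in S(m-1)$, the coproduct $\Delta_R(r)$ of a homogeneous $r \in R(n)$ decomposes as a sum of $r_{(1)} \otimes r_{(2)}$ with complementary degrees, so $\tau(r, ss') = \tau(r_{(1)}, s')\tau(r_{(2)}, s)$ can only be nonzero when the degree of $r_{(2)}$ is $1$ and that of $r_{(1)}$ is $m-1$, forcing $n = m$.

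For part (1), the key is the factorization built into (\ref{eq2.9})--(\ref{eq2.11}), which realize $\tau$ on $\cmdA^{\cmdcop}$ as compatible with the smash-product decomposition $\cmdA^{\cmdcop} = \overline{R}\cmddotrtimes J$. The relation (\ref{eq2.9}) states $\tau(\overline{\cmdS}(r)\,a, s\,x) = \tau(\overline{\cmdS}(r), s)\,\tau(a, x)$, so on the tensor-product level $\tau$ restricted to $\overline{R}\,J \otimes S\,K$ factors as $\tau_{\overline{R}S} \otimes \tau_0$. I would make this precise by choosing homogeneous bases of $\overline{R}$ and $J$ on one side and of $S$ and $K$ on the other; since $\cmdA^{\cmdcop} = \overline{R}\otimes J$ and $\cmdB$-side $= S \otimes K$ as vector spaces, the Gram matrix of $\tau$ in the product basis is, up to reordering, the tensor (Kronecker) product of the Gram matrices of $\tau_{\overline{R}S}$ and $\tau_0$. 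A Kronecker product of two matrices is invertible if and only if both factors are invertible, which yields the stated equivalence. The `only if' direction is immediate from this, and the `if' direction says that non-degeneracy of the two pieces forces non-degeneracy of the whole.

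The main obstacle I expect is justifying the clean tensor-product factorization of the Gram matrix in the possibly infinite-dimensional setting: non-degeneracy of a bilinear form between infinite-dimensional spaces is not simply a statement about an invertible matrix, and one must be careful about which sense of non-degeneracy (left, right, or two-sided) is meant and whether the grading lets one reduce to finite-dimensional blocks. Here part (2) does the essential work, because the grading orthogonality $\tau(R(n), S(m)) = 0$ for $n \neq m$ block-diagonalizes $\tau_{\overline{R}S}$ into the finite-dimensional pieces $\overline{R}(n) \otimes S(n)$ (using that $\overline{R}(n) = \overline{\cmdS}(R(n))$ is finite-dimensional by the local finiteness assumptions on $R$), so non-degeneracy of $\tau$ reduces to non-degeneracy of each finite-dimensional block, where the Kronecker-product argument applies verbatim. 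I would therefore prove (2) first and invoke it when setting up the block decomposition in (1). The remaining steps --- verifying (\ref{eq2.9})--(\ref{eq2.11}) imply the factorization and checking the degree bookkeeping --- are routine given the lemmas, so I would state them briefly rather than expand the computations.
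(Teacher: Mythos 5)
Your strategy for part (1) is in essence the paper's own: the paper proves (1) simply by citing (\ref{eq2.9}), i.e.\ exactly the factorization $\tau(\overline{\cmdS}(r)a, sx)=\tau(\overline{\cmdS}(r),s)\,\tau(a,x)$ you identify. But your execution of the final linear-algebra step via Gram matrices and Kronecker products fails here, and the patch you propose does not save it. Corollary \ref{2.4} carries no finiteness hypothesis at all --- local finiteness of $R$, $S$ enters the paper only from Lemma \ref{2.5} onward --- so the components $\overline{R}(n)$, $S(n)$ need not be finite-dimensional; worse, even when they are, the factor $\tau_0$ lives on $J\otimes K$, and $J$, $K$ are typically infinite-dimensional (in Section 4 they are the group algebra $kP^{\vee}$), so the blocks $(\overline{R}(n)\otimes J)\otimes(S(n)\otimes K)$ are never finite-dimensional and no matrix-invertibility criterion applies. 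The correct argument is basis-free and needs neither part (2) nor any finiteness: given the factorization, write $\alpha=\sum_i \overline{\cmdS}(r_i)\,a_i$ with the $a_i$ linearly independent; if $\tau_0$ is non-degenerate, then $\tau_0^l$ is injective, so the functionals $\tau_0^l(a_i)$ are linearly independent, and $\tau(\alpha,sx)=0$ for all $s,x$ forces $\tau(\overline{\cmdS}(r_i),s)=0$ for all $i,s$, hence each $\overline{\cmdS}(r_i)=0$ when $\tau_{\overline{R}S}$ is non-degenerate; the other three injectivity statements are symmetric. Conversely, if $\tau$ is non-degenerate, specialize (\ref{eq2.9}) to $a=1$ (resp.\ $s=1$) to see that a radical element of $\tau_{\overline{R}S}$ (resp.\ of $\tau_0$) is in the radical of $\tau$. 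So in particular (1) should not be made to depend on (2).

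For part (2), your route via (\ref{eq2.5}) genuinely differs from the paper's, which first proves $\tau(\overline{R}(n),S(m))=0$ for $n\neq m$ from the closed formula (\ref{eq2.13}) and then transfers to $R$ using $\overline{R}(n)\otimes J=R(n)\otimes J$ together with (\ref{eq2.9}). Your route is workable, but as written the induction is incomplete: the inductive step for $m\geq 2$ uses not only the case $m-1$ (for $\tau(r_{(1)},s')$) but also the case $1$ (for $\tau(r_{(2)},s)$ with $s\in S(1)$), and the case $m=1$ cannot be produced by your step --- taking $s'\in S(0)=k$ just returns $\tau(r,s)=\varepsilon(r_{(1)})\tau(r_{(2)},s)=\tau(r,s)$, which is circular. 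So $\tau(R(n),S(1))=0$ for $n\neq 1$ must be proved as a second base case. It can be, with the tools at hand: for $w\in W$, braided primitivity of $w$ and (\ref{eq2.6}) give $\tau(rr',w)=\tau(r,w)\varepsilon(r')+\varepsilon(r)\tau(r',w)$, and since $R(n)=R(1)R(n-1)$ for $n\geq 2$ by (\ref{eq1.7}), both terms vanish for degree reasons ($\varepsilon$ kills positive-degree homogeneous elements). Finally, your argument only treats $\tau(R(n),S(m))$; to reach the stated vanishing of $\tau(R(n)\otimes J, S(m)\otimes K)$ you still need the bridge (\ref{eq2.4}), together with the fact that the $J$-comodule structure preserves the degree of $r$, or else the paper's transfer through (\ref{eq2.9}).
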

\begin{proof}
  (1) This follows by (\ref{eq2.9}).
  
  (2) One sees from (\ref{eq2.13}) that
  \begin{equation}
    \tau ( \overline{R} ( n ) , S ( m ) ) = 0 \quad \mbox{ if } \quad n \ne m.
  \end{equation}
  Since $ \overline{R} ( n ) \otimes J = R ( n ) \otimes J $ for each $n$,
  Part (2) follows by (\ref{eq2.9}), again.
\end{proof}

By Part (2) above, graded linear (at least) maps
\begin{equation*}
  R \rightarrow S^g , \quad S \rightarrow R^g , \quad \overline{R} \rightarrow S^g , \quad S \rightarrow \overline{R}^{g}
\end{equation*}
are given by $ \tau_{R S}^{l} $, $ \tau_{R S}^{r} $, $ \tau_{\overline{R} S}^{l} $, $ \tau_{\overline{R} S}^{r} $, respectively.

\subsection{}
Set $ \overline{V} = \overline{R} ( 1 ) $.
Notice from Proposition \ref{1.2} that $ \overline{R} $ is a pre-Nichols algebra of $ \overline{V} $ in $ {}_{ J^{\cmdcop} }^{ J^{\cmdcop} } \cmdYD $,
such that $ \overline{R} \cmddotrtimes J^{\cmdcop} = ( R \cmddotrtimes J )^{\cmdcop} $.
Recall from \ref{eq1.4} that $ S^t $ is a pre-Nichols algebra of $ W^t $ in $ {}_{ K^{\cmdcop} }^{ K^{\cmdcop} } \cmdYD $.

\begin{lemma} \label{2.5}
  Assume that $V$, $W$ are both finite-dimensional so that $R$, $S$ are locally finite.
  \begin{enumerate}
    \item $ \tau_{R S}^{l} $, $ \tau_{R S}^{r} $ give maps
    \begin{equation} \label{eq2.15}
      R \rightarrow ( S^t )^g , \quad S^t \rightarrow R^g
    \end{equation}
    of braided graded Hopf algebras, which are graded dual to each other.
    \item $ \tau_{\overline{R} S}^{l} $, $ \tau_{\overline{R} S}^{r} $ give maps
    \begin{equation} \label{eq2.16}
      \overline{R} \rightarrow S^g , \quad S \rightarrow \overline{R}^g
    \end{equation}
    of braided graded Hopf algebras, which are graded dual to each other.
  \end{enumerate}
\end{lemma}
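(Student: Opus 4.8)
The plan is to prove each part by verifying directly that the $\tau^l$-map in (\ref{eq2.15}), (\ref{eq2.16}) is a map of braided graded Hopf algebras, and then to obtain the corresponding $\tau^r$-map as its graded transpose, which simultaneously settles the ``graded dual to each other'' assertion. Since $R$, $S$ are locally finite, every graded component is finite-dimensional, so the canonical identification $((S^t)^g)^g = S^t$ (and likewise for $\overline{R}$) lets me recover $\tau_{RS}^r$ as the transpose of $\tau_{RS}^l$ from the defining relation $\langle \tau_{RS}^l ( r ) , s \rangle = \tau ( r , s ) = \langle \tau_{RS}^r ( s ) , r \rangle$; as the graded-dual construction is contravariantly functorial on maps of braided graded Hopf algebras (Lemma \ref{1.5} and the ensuing remarks), the required properties of the $\tau^r$-maps follow at once from those of the $\tau^l$-maps. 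That the maps respect the grading, i.e.\ land in the graded duals, is exactly Corollary \ref{2.4}(2).

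For part (2) I would take the attending Hopf algebra map to be $\tau_0^l : J^{\cmdcop} \rightarrow K^{\circ}$, available because $\tau_0$ is a skew pairing; the $\phi$-linearity and $\phi$-colinearity of $\tau_{\overline{R}S}^l$ in degree one are precisely the content of (\ref{eq2.1}), (\ref{eq2.2}), which for finite-dimensional $W$ say that $\tau_1^l$ is a map of braided vector spaces attended by $\tau_0^l$, and these propagate to all degrees since $\overline{R}$ is generated in degree one. The algebra-map property is formula (\ref{eq2.10}): reading the product of $\overline{R}$ as the restriction of the product of $R \cmddotrtimes J$, (\ref{eq2.10}) states exactly that $\tau_{\overline{R}S}^l$ carries it to the convolution product on $S^g$ dual to $\Delta_S$. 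The coalgebra-map property is formula (\ref{eq2.11}) combined with the expression for $\Delta_{\overline{R}}$ in Proposition \ref{1.1}(3): the two tensor factors $\overline{\cmdS} ( \overline{\cmdS} ( r_{(2)J} ) \cmdrhu r_{(1)} )$ and $\overline{\cmdS} ( r_{(2)R} )$ occurring there are evaluated against $s$ and $s'$ on the right-hand side of (\ref{eq2.11}), which is just the assertion $\Delta_{S^g} ( \tau_{\overline{R}S}^l ( \xi ) ) = ( \tau_{\overline{R}S}^l \otimes \tau_{\overline{R}S}^l ) ( \Delta_{\overline{R}} ( \xi ) )$.

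Part (1) runs in exact parallel, now with target $(S^t)^g$, so the relevant structures on $S^t$ are the opposite product $m^t$ and the twisted coproduct $\Delta^t$ of (\ref{eq1.12}). Formula (\ref{eq2.6}), whose right-hand side features precisely the first tensor factor $\overline{\cmdS} ( s_{(2)K} ) \cmdrhu s_{(1)}$ of $\Delta^t ( s )$, yields the algebra-map property into the dual of the coalgebra $( S^t , \Delta^t )$, while formula (\ref{eq2.5}), read against $m^t$, yields the coalgebra-map property; the attending map is now the antipode-twist $\cmdS_{K^{\circ}} \circ \tau_0^l : J \rightarrow ( K^{\cmdcop} )^{\circ}$ of $\tau_0^l$, which one checks is a Hopf algebra map into the Yetter-Drinfeld category hosting $(S^t)^g$. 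Alternatively, part (1) can be deduced from part (2) by transporting along the pode isomorphisms $R^t \cmdarrow^{\simeq} \overline{R}$ and $S^t \cmdarrow^{\simeq} \overline{S}$ of Proposition \ref{1.4}(2), using the compatibility of $\tau$ with the pode recorded in Lemma \ref{2.3}.

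The step I expect to be the main obstacle is not any of these bookkeeping computations but the verification that the naive convolution product and transpose-of-product coproduct appearing in (\ref{eq2.5}), (\ref{eq2.6}), (\ref{eq2.10}), (\ref{eq2.11}) coincide on the nose with the intrinsic braided graded Hopf algebra structures carried by $S^g$ and $(S^t)^g$ as objects of ${}_{ K^{\circ} }^{ K^{\circ} } \cmdYD$ and ${}_{ ( K^{\cmdcop} )^{\circ} }^{ ( K^{\cmdcop} )^{\circ} } \cmdYD$ through Lemma \ref{1.5}. In a braided category the identification $( S \otimes S )^g \cong S^g \otimes S^g$ absorbs a braiding factor, and one must check that the distinction between $S^g$ and $(S^t)^g$ --- equivalently, between the formulas (\ref{eq2.10}), (\ref{eq2.11}) and the formulas (\ref{eq2.5}), (\ref{eq2.6}) --- is calibrated so that these braiding factors cancel and no spurious twist survives. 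Confirming this compatibility, together with the $\phi$-colinearity of the attending maps into the dual Yetter-Drinfeld categories, is where the real care is needed.
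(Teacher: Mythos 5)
Your proposal is correct and follows essentially the same route as the paper's (much terser) proof: part (1) from (\ref{eq2.5}), (\ref{eq2.6}) with attending map $\cmdS_{K^{\circ}} \circ \tau_{0}^{l} : J \rightarrow (K^{\cmdcop})^{\circ}$ verified on $V$ and propagated since $R$ is generated by $V$, and part (2) from (\ref{eq2.10}), (\ref{eq2.11}) together with Proposition \ref{1.1}(3). Your additional spelling-out of the transpose argument for the $\tau^r$-maps and the braiding-compatibility of the graded-dual structures only makes explicit what the paper leaves implicit.
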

\begin{proof}
  (1) This follows from (\ref{eq2.5}), (\ref{eq2.6}).
  Notice that the map $ R \rightarrow ( S^t )^g $ above is attended by $ \cmdS_{K^{\circ}} \circ \tau_{0}^{l} : J \rightarrow {(K^{\cmdcop})}^{\circ} $,
  since it is so, restricted to $V$, and $R$ is generated by $V$.
  
  (2) Similarly, this follows from (\ref{eq2.10}), (\ref{eq2.11}); see also Proposition \ref{1.1} (3).
\end{proof}

\begin{proposition} \label{2.6}
  \begin{enumerate}
    \item The following are equivalent:
    \begin{itemize}
      \item[(a)] $ \tau_{R S} $ is non-degenerate;
      \item[(b)] $ \tau_1 ( = \tau_{V W} ) $ is non-degenerate, and $R$, $S$ are both Nichols algebras;
      \item[(c)] $ \tau_{\overline{R} S} $ is non-degenerate;
      \item[(d)] $ \tau_{\overline{V} W} $ is non-degenerate, and $ \overline{R} $, $S$ are both Nichols algebras.
    \end{itemize}
    These conditions hold true if $\tau$ is non-degenerate.
    \item Assume that $V$, $W$ are both finite-dimensional.
    Then the equivalent conditions given above are further equivalent to each of the following:
    \begin{itemize}
      \item[(e)] The maps given in (\ref{eq2.15}) are isomorphisms;
      \item[(f)] The maps given in (\ref{eq2.16}) are isomorphisms.
    \end{itemize}
  \end{enumerate}
\end{proposition}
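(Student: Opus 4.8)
The plan is to prove the equivalence of (a)--(d) in (1) by establishing $\mathrm{(a)}\Leftrightarrow\mathrm{(b)}$, its ``barred'' counterpart $\mathrm{(c)}\Leftrightarrow\mathrm{(d)}$, and the bridge $\mathrm{(b)}\Leftrightarrow\mathrm{(d)}$; part (2) will then follow by reading off non-degeneracy from the adjoint maps. Throughout I use that $\tau_{RS}$ is graded, i.e. $\tau(R(n),S(m))=0$ for $n\neq m$ (put $a=1$, $x=1$ in Corollary \ref{2.4}(2)), so that non-degeneracy of $\tau_{RS}$ means precisely that each induced pairing $R(n)\otimes S(n)\to k$ is perfect. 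For $\mathrm{(a)}\Rightarrow\mathrm{(b)}$, the degree-$1$ component of $\tau_{RS}$ is $\tau_1$, which is thus non-degenerate; and to see that $R$ is the Nichols algebra I would take a homogeneous primitive $p\in P(R)$ of degree $m\geq 2$ and show it lies in the left radical. Since $S$ is generated by $W=S(1)$ it suffices to pair $p$ with a product $w_1\cdots w_m$ of degree-$1$ elements, and one application of (\ref{eq2.5}) gives $\tau(p,w_1\cdots w_m)=\tau(p_{(1)},w_2\cdots w_m)\,\tau(p_{(2)},w_1)$, which by $\Delta_R(p)=p\otimes 1+1\otimes p$ collapses to $\varepsilon(w_1)\,\tau(p,w_2\cdots w_m)+\varepsilon(w_2\cdots w_m)\,\tau(p,w_1)=0$. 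Non-degeneracy forces $p=0$, so $P(R)=R(1)$; the symmetric argument gives $S$ Nichols.

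The substantial direction is $\mathrm{(b)}\Rightarrow\mathrm{(a)}$. Let $N_R$, $N_S$ be the left and right radicals of $\tau_{RS}$, homogeneous by gradedness. I would first show $N_R$ is a Hopf ideal of $R$. It is a two-sided ideal straight from (\ref{eq2.6}), in which $r$ occurs only through $\tau(r,-)$ and $r'$ only through $\tau(r',s_{(2)S})$, both second arguments lying in $S$. It is a coideal because, for $r\in N_R$, (\ref{eq2.5}) yields $\sum\tau(r_{(1)},s')\,\tau(r_{(2)},s)=\tau(r,ss')=0$ for all $s,s'\in S$; since $\tau_{RS}$ descends to a non-degenerate pairing on $R/N_R\otimes S/N_S$, and tensor squares of the injective map $R/N_R\hookrightarrow (S/N_S)^{*}$ stay injective, the image of $\Delta_R(r)$ in $(R/N_R)^{\otimes 2}$ must vanish, i.e. $\Delta_R(r)\in N_R\otimes R+R\otimes N_R$. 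As $\tau_1$ is non-degenerate we have $N_R\cap V=0$, so $R/N_R$ is again a pre-Nichols algebra of $V$. But $R$, being \emph{the} Nichols algebra, is $T(V)$ modulo the largest Hopf ideal meeting $V$ trivially, so this further quotient forces $N_R=0$. Symmetrically $N_S=0$, and $\tau_{RS}$ is non-degenerate. I expect this coideal verification---carried out \emph{without} finite-dimensionality, by pairing $\Delta_R(r)$ against $S\otimes S$ rather than invoking Lemma \ref{2.5}---to be the main obstacle.

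The barred equivalence $\mathrm{(c)}\Leftrightarrow\mathrm{(d)}$ is the verbatim repetition of the two paragraphs above with $(R,V,J)$ replaced by $(\overline{R},\overline{V},J^{\cmdcop})$ and (\ref{eq2.5}), (\ref{eq2.6}) replaced by (\ref{eq2.10}), (\ref{eq2.11}); here $\overline{R}$ is a pre-Nichols algebra of $\overline{V}$ by Proposition \ref{1.2}. For $\mathrm{(b)}\Leftrightarrow\mathrm{(d)}$ I would combine Proposition \ref{1.2}(2), by which $R$ is Nichols iff $\overline{R}$ is, with the identity $\tau(\overline{\cmdS}(v),w)=-\tau_1(v,w)$ (the case $n=1$, $x=1$ of (\ref{eq2.13})): since $\overline{\cmdS}\colon V\to\overline{V}$ is bijective, $\tau_{\overline{V}W}$ is non-degenerate iff $\tau_1$ is. Finally, the closing assertion of (1) is immediate from Corollary \ref{2.4}(1), which makes $\tau$ non-degenerate imply (c).

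For part (2), finite-dimensionality lets me invoke Lemma \ref{2.5}: the maps in (\ref{eq2.15}) and in (\ref{eq2.16}) are maps of braided graded Hopf algebras which are graded dual to one another, so within each pair one is an isomorphism iff the other is. In a fixed finite-dimensional degree $n$ a one-sided-perfect pairing is automatically perfect on both sides, so $\tau_{RS}$ (resp. $\tau_{\overline{R}S}$) is non-degenerate iff the degree-$n$ component $R(n)\to (S(n))^{*}$ of $\tau_{RS}^{l}$ (resp. $\overline{R}(n)\to (S(n))^{*}$ of $\tau_{\overline{R}S}^{l}$) is an isomorphism for every $n$. This gives $\mathrm{(a)}\Leftrightarrow\mathrm{(e)}$ and $\mathrm{(c)}\Leftrightarrow\mathrm{(f)}$, completing the proof.
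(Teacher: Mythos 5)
Your overall decomposition --- (a)$\Leftrightarrow$(b), its barred analogue (c)$\Leftrightarrow$(d), the bridge (b)$\Leftrightarrow$(d) via Proposition \ref{1.2}(2) together with the sign identity $\tau(\overline{\cmdS}(v),w)=-\tau_1(v,w)$ (cf.\ (\ref{eq2.12})), the closing claim via Corollary \ref{2.4}(1), and part (2) by degreewise finite-dimensional linear algebra --- is exactly the paper's. Where you genuinely differ is in the engine for (a)$\Leftrightarrow$(b): the paper considers the image $T=\cmdIm\tau_{RS}^{l}$ inside the dual coalgebra $S^{\circ}$, notes that $T$ is strictly graded because $S$ is generated in degree $1$, and quotes the theory of strictly graded coalgebras \cite[Section 11.2]{Sw} to obtain ``$\tau_{RS}^{l}$ injective $\Leftrightarrow$ $\tau_{1}^{l}$ injective and $R$ Nichols'' (with the $\tau^{r}$-version handled via Proposition \ref{1.4}(1)). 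You instead unpack that machinery into hands-on computations: pairing a homogeneous primitive of degree $\geq 2$ against products $w_1\cdots w_m$ for (a)$\Rightarrow$(b), and a radical argument for (b)$\Rightarrow$(a). Your (a)$\Rightarrow$(b) computation is correct; note only that the ``symmetric argument'' for $S$ must run through the twisted coproduct appearing in (\ref{eq2.6}), i.e.\ through $S^{t}$, which is precisely why the paper invokes Proposition \ref{1.4}(1) there.

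Your (b)$\Rightarrow$(a), however, has a genuine gap at its final step. From ``$N_R$ is a homogeneous two-sided ideal and coideal with $N_R\cap V=0$'' you conclude that $R/N_R$ is again a pre-Nichols algebra of $V$ and then appeal to maximality of the ideal defining the Nichols algebra. But in this paper a pre-Nichols algebra is a quotient of $T(V)$ by a homogeneous bi-ideal \emph{in} ${}_J^J\cmdYD$, and the maximality defining ``Nichols'' is maximality among such Yetter--Drinfeld bi-ideals; you never show that $N_R$ is stable under the $J$-action and $J$-coaction, so neither the passage to the quotient (as an object of ${}_J^J\cmdYD$) nor the appeal to maximality is available as written. (The stability is in fact true, but it requires a separate computation from the compatibilities (\ref{eq2.1}), (\ref{eq2.2}), (\ref{eq2.4}).) The repair is cheap and uses only what you already proved, via the paper's other characterization of Nichols algebras, $P(R)=R(1)$: your coideal computation gives $\Delta_R(r)\in N_R\otimes R+R\otimes N_R$, so a nonzero homogeneous $r\in N_R$ of minimal degree $m$ (necessarily $m\geq 2$, since $\tau(1,1)=1$ and $N_R\cap V=0$) has all mixed components of $\Delta_R(r)$ lying in $N_R(i)\otimes R(m-i)+R(i)\otimes N_R(m-i)=0$ for $0<i<m$; hence $r$ is primitive, contradicting $P(R)=R(1)$. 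This route also shows that the two-sided-ideal property of $N_R$ was never needed. The same correction applies to your barred case (c)$\Leftrightarrow$(d), and for the right radical $N_S$ one argues inside $S^{t}$.
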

\begin{proof}
  (1) Proposition \ref{1.2} (2) and (\ref{eq2.12}) prove (b) $ \Leftrightarrow $ (d).
  To prove (a) $ \Leftrightarrow $ (b), let $ T = \cmdIm \tau_{R S}^{l} $ denote the image of $ \tau_{R S}^{l} $.
  We see from (\ref{eq2.5}) that $T$ is a subcoalgebra of the dual coalgebra $ S^{\circ} $ of the algebra $S$; see \cite[Section 6.0]{Sw}.
  Moreover, $T$ is a graded coalgebra, and $ \tau_{RS}^l : R \rightarrow T $ is a graded anti-coalgebra map.
  Since $S$ is generated by $ W = S ( 1 ) $, $T$ is strictly graded, or $ P ( T ) = T ( 1 ) $; see \cite[Section 11.2]{Sw}.
  It follows that $ \tau_{R S}^{l} $ is injective if and only if $ \tau_{1}^{l} $ is injective, and $R$ is Nichols.
  Similarly, since by (\ref{eq2.6}), $ \tau_{R S}^{r} : S^t \rightarrow \cmdIm \tau_{R S}^{r} $ is a graded coalgebra map onto a strictly graded coalgebra,
  it follows by using Proposition \ref{1.4} (1) that $ \tau_{R S}^{r} $ is injective if and only if $ \tau_{1}^{r} $ is injective, and $S$ is Nichols.
  These prove the desired equivalence.
  Similarly, (c) $ \Leftrightarrow $ (d) follows.
  The last statement follows by Corollary \ref{2.4} (1).
  
  (2) This follows, since obviously, (a) $ \Leftrightarrow $ (e), (c) $ \Leftrightarrow $ (f).
\end{proof}

\begin{remark} \label{2.7}
  As was seen above, the graded linear map $ R \rightarrow S^g $ given by $ \tau_{R S}^{l} $ is injective if and only if $ \tau_{1}^{l} $ is injective, and $R$ is Nichols.
\end{remark}

\section{Generalized $q$-boson algebras and their integrable modules}

Throughout this section we let
\begin{equation} \label{eq3.1}
  J , \ K , \ \tau_0 , \ V , \ W , \ \tau_1 , \ R , \ S
\end{equation}
be as given at the beginning of \ref{2.2}

\subsection{}

>From the proof of \cite[Theorem 5.3]{M2}, we see that the tensor-product algebra $ R \otimes S $ is uniquely deformed to such an algebra that includes $ R ( = R \otimes k ) $, $ S ( = k \otimes S ) $ as subalgebras, and obeys the rules of product
\begin{eqnarray}
  & r s = r \otimes s \quad ( r \in R , s \in S ), & \label{eq3.2} \\
  & w v = \tau_0 ( v_J , w_J ) v_V \otimes w_W + \tau_1 ( v , w ) 1 \otimes 1 \quad ( v \in V , w \in W ). & \label{eq3.3} 
\end{eqnarray}
Recall here that we take our $ \tau_1 ( v , w ) $ as $ - \lambda ( w , v ) $ in \cite{M2}; see also Remark \ref{3.3} below.

\begin{definition} \label{3.1}
  We denote this deformed algebra by $ \cmdB $, and call it the {\rm generalized} $q$-{\rm boson algebra} associated to the data (\ref{eq3.1}).
\end{definition}

Notice that $ \cmdB $ has $ 1 \otimes 1 $ as unit.
Also, it is a $ \cmdbbZ $-graded algebra, by counting degrees so that
\begin{equation*}
  \cmddeg R ( n ) = n , \quad \cmddeg S ( n ) = - n \quad ( n \geq 0 ) 
\end{equation*}

\subsection{}
To obtain a useful description of $ \cmdB $,
let $ \tau $ be the skew pairing given by Proposition \ref{2.1}.
We set
\begin{equation*}
  \cmdA = R \cmddotrtimes J , \quad \cmdH = S \cmddotrtimes K ,
\end{equation*}
so that $ \tau $ is defined on $ \cmdA \otimes \cmdH $.

\begin{lemma}[cf. \cite{Lu}] \label{3.2}
  \begin{enumerate}
    \item $ \cmdA $ is a left $ \cmdH $-module algebra under the action
    \begin{equation*}
      \xi \triangleright \alpha = \tau ( \alpha_1 , \xi ) \alpha_2 \quad ( \alpha \in \cmdA , \ \xi \in \cmdH ).
    \end{equation*}
    \item The associated smash product $ \cmdA \# \cmdH $ is the algebra defined on $ \cmdA \otimes \cmdH $ with respect to the unit $ 1 \# 1 $ and the product
    \begin{equation*}
      ( \alpha \# \xi ) ( \beta \# \eta ) = \tau ( \beta_1 , \xi_1 ) \alpha \beta_2 \# \xi_2 \eta,
    \end{equation*}
    where $ \alpha , \beta \in \cmdA $, $ \xi , \eta \in \cmdH $.
    Here, $ \alpha \# \xi $ stands for $ \alpha \otimes \xi $.
    \item A linear representation $ \varpi : \cmdA \# \cmdH \rightarrow \cmdEnd ( \cmdA ) $ of $ \cmdA \# \cmdH $ on $ \cmdA $ is given by
    \begin{equation*}
      \varpi ( \alpha \# \xi ) ( \beta ) = \alpha ( \xi \triangleright \beta ) \quad ( \alpha , \beta \in \cmdA , \ \xi \in \cmdH ).
    \end{equation*}
  \end{enumerate}
\end{lemma}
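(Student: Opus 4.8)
The plan is to treat all three parts as direct verifications, since the statement is simply an instance of the standard theory of smash products attached to a skew pairing; the only genuine input is the list of skew pairing axioms for $\tau$ recorded in Proposition \ref{2.1} and its surrounding discussion, together with the fact that the coproducts of $\cmdA$ and $\cmdH$ are algebra maps. In particular, nothing about the graded or (pre-)Nichols structure is needed: the construction goes through for any skew pairing between Hopf algebras with bijective antipode.

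For Part (1), I would first check that $\xi \triangleright \alpha = \tau(\alpha_1, \xi)\alpha_2$ is a unital associative left action. Unitality, $1 \triangleright \alpha = \tau(\alpha_1, 1)\alpha_2 = \varepsilon(\alpha_1)\alpha_2 = \alpha$, uses the normalization $\tau(\alpha, 1) = \varepsilon(\alpha)$. For associativity I would expand $(\xi\eta)\triangleright\alpha = \tau(\alpha_1, \xi\eta)\,\alpha_2$ by the axiom $\tau(\alpha_1, \xi\eta) = \tau((\alpha_1)_1, \eta)\,\tau((\alpha_1)_2, \xi)$ and compare with $\xi \triangleright(\eta\triangleright\alpha)$; coassociativity makes both sides equal to $\tau(\alpha_1, \eta)\tau(\alpha_2, \xi)\alpha_3$. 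The module-algebra conditions are equally mechanical: $\xi \triangleright 1 = \tau(1, \xi)\,1 = \varepsilon(\xi)\,1$ by the other normalization, while $\xi \triangleright(\alpha\beta) = (\xi_1\triangleright\alpha)(\xi_2\triangleright\beta)$ follows by writing $(\alpha\beta)_1 \otimes (\alpha\beta)_2 = \alpha_1\beta_1 \otimes \alpha_2\beta_2$ and then applying $\tau(\alpha_1\beta_1, \xi) = \tau(\alpha_1, \xi_1)\tau(\beta_1, \xi_2)$.

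Parts (2) and (3) then cost almost nothing. For Part (2) I would recall that the smash product of a left $\cmdH$-module algebra has product $(\alpha \# \xi)(\beta \# \eta) = \alpha\,(\xi_1 \triangleright \beta)\,\# \xi_2\eta$, and substitute the action of Part (1) to recover the displayed formula $\tau(\beta_1, \xi_1)\,\alpha\beta_2 \# \xi_2\eta$. For Part (3) I would verify that the assignment $\beta \mapsto \alpha(\xi \triangleright \beta)$ defines a unital algebra homomorphism $\varpi$: unitality is immediate from $1 \triangleright \beta = \beta$, and multiplicativity $\varpi\bigl((\alpha \# \xi)(\gamma \# \eta)\bigr) = \varpi(\alpha \# \xi)\circ\varpi(\gamma\#\eta)$ reduces, after expanding both sides, to the module-algebra identity $\xi \triangleright\bigl(\gamma(\eta\triangleright\beta)\bigr) = (\xi_1\triangleright\gamma)\bigl((\xi_2\eta)\triangleright\beta\bigr)$, itself a consequence of the product compatibility and the associativity of the action proved in Part (1).

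There is no serious obstacle here; the content lies entirely in the bookkeeping of Sweedler indices. The step most prone to error, and hence the one I would write out most carefully, is the associativity of the action in Part (1) and its reincarnation as the multiplicativity of $\varpi$ in Part (3). There one must be scrupulous about which skew pairing axiom is invoked, since the axiom splitting a product in the second slot carries the \emph{reversed} order $\tau(a_1, y)\tau(a_2, x)$, and it is precisely this reversal, matched against the order in which the coproducts of $\xi$ and $\alpha$ are unfolded, that makes the two sides coincide.
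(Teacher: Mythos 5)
Your verification is correct and matches the paper's treatment: the paper dismisses this lemma with ``This is easy to see'' (citing Lu only for the special case $\cmdA = (\cmdH^{\cmdcop})^*$ with $\tau$ the evaluation pairing), and the routine check it has in mind is exactly the one you carry out — unitality and associativity of the action from the skew-pairing axioms, the standard smash-product formula for Part (2), and multiplicativity of $\varpi$ reduced to the module-algebra identity. Your attention to the reversed order $\tau(a,xy)=\tau(a_1,y)\tau(a_2,x)$ is precisely the one point where care is needed, and you handle it correctly.
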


This is easy to see.
Lu \cite{Lu} essentially proved the result above, when $ \cmdH $ is a finite-dimensional Hopf algebra,
$ \cmdA = ( \cmdH^{ \cmdcop } )^{*} $, and $ \tau $ is the evaluation map;
in this case, $ \varpi $ is necessarily an isomorphism.

\begin{remark} \label{3.3}
  With the notation as above,
  $ \sigma ( \alpha \otimes \xi , \beta \otimes \eta ) := \varepsilon ( \alpha ) \tau ( \beta , \xi ) \varepsilon ( \eta ) $ defines a $2$-cocyle on $ ( \cmdA \otimes \cmdH )^{ \otimes 2 } $;
  see \cite[Proposition 2.6]{M2}, for example.
  We see that the algebra $ \cmdA \# \cmdH $ coincides with the crossed product $ {}_{ \sigma } ( \cmdA \otimes \cmdH ) $ associated to $ \sigma $,
  and with the bicleft (especially, right $ \cmdA \otimes \cmdH $-cleft) extension over $k$ as given in the last paragraph of the proof of \cite[Theorem 5.3]{M2}.
  To prove the existance of such a skew pairing $ \tau $ as given by Proposition \ref{2.1},
  this last article first constructed that cleft extension,
  and then proved that the associated $2$-cocyle arises precisely from the desired $ \tau $.
  
\end{remark}


Since $R$, $S$ are left coideal subalgebras of $ \cmdA = R \cmddotrtimes J $, $ \cmdH = S \cmddotrtimes K $, respectively,
we see that $ R \otimes S ( = ( R \otimes k ) \otimes ( S \otimes k ) ) $ is a subalgebra of $ \cmdA \# \cmdH $.
We denote the resulting algebra by $ R \# S $.

\begin{proposition} \label{3.4}
  \begin{enumerate}
    \item The inclusions $ R , S \hookrightarrow R \# S $ uniquely extend to an algebra isomorphism $ \displaystyle \cmdB \cmdarrow^{\simeq} R \# S $.
    \item A linear representation $ \rho : R \# S \rightarrow \cmdEnd ( R ) $ of $ R \# S $ on $R$ is induced from $ \varpi $ so that
    \begin{equation*}
      \rho ( r \# s ) ( r' ) = r ( s \triangleright r' ) \quad ( r , r' \in R , \ s \in S ).
    \end{equation*}
  \end{enumerate}
\end{proposition}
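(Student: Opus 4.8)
The plan is to prove part (1) by recognizing the subalgebra $ R \# S $ of $ \cmdA \# \cmdH $ as a deformation of the tensor-product algebra $ R \otimes S $ that obeys the two defining rules (\ref{eq3.2}), (\ref{eq3.3}), and then to invoke the uniqueness of such a deformation. As a vector space $ R \# S = R \otimes S $, and by construction it contains $ R = R \otimes k $ and $ S = k \otimes S $ as subalgebras; so it suffices to verify that its product satisfies (\ref{eq3.2}) and (\ref{eq3.3}). For (\ref{eq3.2}), writing $ r = r \# 1 $, $ s = 1 \# s $ and using $ \Delta ( 1 ) = 1 \otimes 1 $ and $ \tau ( 1 , 1 ) = 1 $ in the product formula of Lemma \ref{3.2}(2), one reads off $ ( r \# 1 ) ( 1 \# s ) = r \# s = r \otimes s $ immediately.

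For (\ref{eq3.3}), I would take $ v \in V = R ( 1 ) $ and $ w \in W = S ( 1 ) $ and compute $ ( 1 \# w ) ( v \# 1 ) = \tau ( v_1 , w_1 ) \, v_2 \# w_2 $, where the coproducts are those of $ \cmdA $ and $ \cmdH $. By (\ref{eq1.8}) and its analogue for $ \cmdH $ these read $ \Delta_{\cmdA} ( v ) = v \otimes 1 + v_J \otimes v_R $ and $ \Delta_{\cmdH} ( w ) = w \otimes 1 + w_K \otimes w_S $. Expanding produces four terms; the two mixed terms carry the coefficients $ \tau ( v , w_K ) $ and $ \tau ( v_J , w ) $, both of which vanish by the orthogonality (\ref{eq2.3}), while the surviving terms are $ \tau_1 ( v , w ) \, 1 \# 1 $ and $ \tau_0 ( v_J , w_K ) \, v_R \# w_S $. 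Reading $ v_R = v_V $ and $ w_S = w_W $ as the Yetter--Drinfeld comodule legs and using $ v_V \# w_W = v_V \otimes w_W $ from (\ref{eq3.2}), one obtains exactly (\ref{eq3.3}). Since $ R $ and $ S $ generate $ R \# S $ (indeed $ R \cdot S $ already spans it), the uniqueness of the deformation forces the identity map on $ R \otimes S $ to be the unique algebra isomorphism $ \cmdB \cmdarrow^{\simeq} R \# S $ extending the two inclusions.

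Part (2) should then be quick: restrict the algebra map $ \varpi $ of Lemma \ref{3.2}(3) to the subalgebra $ R \# S \subset \cmdA \# \cmdH $ and observe that $ R $ is an invariant subspace. Indeed, since $ R $ is a left coideal subalgebra of $ \cmdA $ we have $ \Delta_{\cmdA} ( r' ) \in \cmdA \otimes R $, whence $ s \triangleright r' = \tau ( r'_1 , s ) \, r'_2 \in R $ for all $ s \in S $, $ r' \in R $; therefore $ \varpi ( r \# s ) ( r' ) = r ( s \triangleright r' ) \in R $. The induced representation on $ R $ is the desired $ \rho $, and the displayed formula is merely the restriction of the formula of Lemma \ref{3.2}(3).

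I expect the main obstacle to be the bookkeeping in the computation of $ ( 1 \# w ) ( v \# 1 ) $ for part (1): one must correctly match the two bosonization coproducts (\ref{eq1.8}), track which of the four cross-terms are annihilated by the orthogonality (\ref{eq2.3}), and finally re-interpret the comodule legs $ v_R $, $ w_S $ as the comodule components $ v_V $, $ w_W $ so that the output agrees with (\ref{eq3.3}) verbatim. Once this single identity is established, both the uniqueness of the extension in (1) and the whole of (2) are formal.
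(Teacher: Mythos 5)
Your proof is correct and takes essentially the same approach as the paper's (much terser) proof, which likewise observes via (\ref{eq1.8}) that the relation (\ref{eq3.3}) holds in $R \# S$, concludes (1) from uniqueness of the deformation together with the fact that the resulting algebra map is the identity on $R \otimes S$, and settles (2) by noting $r ( s \triangleright r' ) \in R$. Your explicit four-term expansion of $( 1 \# w )( v \# 1 )$ and the left-coideal argument for $s \triangleright r' \in R$ simply spell out the details the paper leaves to the reader.
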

\begin{proof}
  (1) We see from (\ref{eq1.8}) that the same relation as (\ref{eq3.3}) holds true in $ R \# S $.
  Therefore the inclusions above uniquely extend to an algebra map $ \cmdB \rightarrow R \# S $,
  which is an isomorphism since it is the identity map of $ R \otimes S $, regarded as a linear map.

  (2) This follows since $ r ( s \triangleright r' ) \in R $.
\end{proof}

\begin{remark} \label{3.5}
  By (\ref{eq2.7}), one sees
  \begin{equation} \label{eq3.4}
    s \triangleright r = \tau ( r_{( 1 )} , s ) r_{( 2 )} \quad ( r \in R , \ s \in S ).
  \end{equation}
  Let $ w \in W ( = S ( 1 ) ) $.
  One then sees
  \begin{equation} \label{eq3.5}
    w \triangleright r r' = ( w \triangleright r ) r' + \tau ( r_J , w_K ) r_R ( w_S \triangleright r' ) \quad ( r , r' \in R ).
  \end{equation}
  Thus, $w$ acts on $R$ as a sort of (braided) derivation.
\end{remark}

\subsection{}

We will identify so as $ \cmdB = R \# S $ via the isomorphism above.
We regard $R$ as a left $ \cmdB $-module by $ \rho $.
Notice that this $ \cmdB $-module is $ \cmdbbZ $-graded, or more explicitly
\begin{equation} \label{eq3.6}
  R ( l ) ( S ( m ) \triangleright R ( n ) ) \subset R ( l - m + n ) \quad ( l, m , n \geq 0 ),
\end{equation}
where we suppose $ R ( n ) = 0 $ if $ n < 0 $. Set $ \cmdI = R \otimes ( \bigoplus_{n > 0} S ( n ) ) $ in $ \cmdB $;
this is the left ideal generated by $ S ( 1 ) $.
We see that $ \beta \mapsto \rho ( \beta ) ( 1 ) \ ( \beta \in \cmdB ) $ induces the $ \cmdB $-linear isomorphism
\begin{equation} \label{eq3.7}
  \cmdB / \cmdI \cmdarrow^{\simeq} R .
\end{equation}

\begin{theorem} \label{3.6}
  Assume that $ \tau_{1}^{l} : V \rightarrow W^* $ is injective, and $R$ is the Nichols algebra of $V$.
  Then, $R$ is simple as a left $ \cmdB $-module.
\end{theorem}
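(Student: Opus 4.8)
The plan is to show that $R$ is a cyclic $\cmdB$-module generated by $1 \in R(0)$, and then that every nonzero $\cmdB$-submodule of $R$ already contains $1$; simplicity is immediate from these two facts. Cyclicity is clear: since $1 \triangleright 1 = 1$ by (\ref{eq3.4}), Proposition \ref{3.4}(2) gives $\rho ( r \# 1 ) ( 1 ) = r ( 1 \triangleright 1 ) = r$ for every $r \in R$, so that $\rho ( \cmdB ) ( 1 ) = R$.

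The key input is Remark \ref{2.7}: under the present hypotheses (that $ \tau_1^l $ is injective and $R$ is the Nichols algebra of $V$), the graded linear map $ \tau_{R S}^l : R \to S^g $ is injective, i.e.\ $ \tau_{R S} $ is non-degenerate on the left. Combined with the degree bookkeeping of Corollary \ref{2.4}(2), this turns the $S$-action into a device that collapses any element onto a scalar multiple of $1$. Concretely, I would first record, for homogeneous $ r \in R ( n ) $ and $ s \in S ( n ) $, the identity
\begin{equation*}
  s \triangleright r = \tau ( r_{( 1 )} , s ) \, r_{( 2 )} = \tau ( r , s ) \cdot 1 ,
\end{equation*}
valid because in $ \Delta_R ( r ) = \sum r_{( 1 )} \otimes r_{( 2 )} $ every term with $ \cmddeg r_{( 1 )} < n $ is killed (as $ \tau ( R ( i ) , S ( n ) ) = 0 $ for $ i \ne n $ by Corollary \ref{2.4}(2)), leaving only the term $ r \otimes 1 $.

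Now let $M$ be a nonzero $ \cmdB $-submodule and choose $ 0 \ne m \in M $. Writing $ m = \sum_{i = 0}^{n} m_i $ with $ m_i \in R ( i ) $ and top component $ m_n \ne 0 $, I would use injectivity of the graded map $ \tau_{R S}^l $ on $ R ( n ) $ to pick $ s \in S ( n ) $ with $ \tau ( m_n , s ) \ne 0 $. Applying $ \rho ( 1 \# s ) $ and invoking (\ref{eq3.6}), each $ s \triangleright m_i $ lies in $ R ( i - n ) $, hence vanishes for $ i < n $, while $ s \triangleright m_n = \tau ( m_n , s ) \cdot 1 $ by the identity above. Thus $ \rho ( 1 \# s ) ( m ) = \tau ( m_n , s ) \cdot 1 $ is a nonzero multiple of $1$, so $ 1 \in M $ and therefore $ M = \rho ( \cmdB ) ( 1 ) = R $.

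Once Remark \ref{2.7} is in hand the argument is essentially forced, so I do not anticipate a genuine obstacle; the only point needing care is the grading computation, namely that the lower components $ m_i $ $( i < n )$ are annihilated and that the top component collapses cleanly onto $ R ( 0 ) = k \cdot 1 $. This is precisely where the Nichols hypothesis on $R$ enters, through the left non-degeneracy of $ \tau_{R S} $: without it, $ \cmdKer \tau_{R S}^l $ could contain nonzero homogeneous elements of positive degree that no $S$-action would detect, and simplicity would fail. Equivalently, the same computation shows that $ \rho ( \cmdB ) $ acts transitively from any nonzero vector, i.e.\ that its image is dense in $ \cmdEnd ( R ) $, as announced in the introduction.
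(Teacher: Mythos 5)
Your proof is correct, but it takes a genuinely different route from the paper's. The paper factors $\rho$ as $\mu \circ (\cmdid \otimes \tau_{R S}^{r})$ (Lemma \ref{3.7}), topologizes $\cmdEnd(R)$ as a projective limit of the spaces $\cmdHom(X,R)$ over finite-dimensional $X \subset R$, and proves that $\cmdIm \rho$ is \emph{dense} in $\cmdEnd(R)$: by Remark \ref{2.7} the composite $S \rightarrow R^{g} \rightarrow X^{*}$ is surjective for every such $X$, hence so is $\cmdB \rightarrow \cmdEnd(R) \rightarrow \cmdHom(X,R)$; density gives transitivity on finitely many linearly independent vectors, and the case $n=1$ yields simplicity. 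You instead run a leading-term argument: the collapse identity $s \triangleright r = \tau(r,s)\cdot 1$ for $r \in R(n)$, $s \in S(n)$ (a correct consequence of (\ref{eq3.4}), Corollary \ref{2.4}(2), and gradedness of $\Delta_R$), injectivity of $\tau_{R S}^{l}$ on each $R(n)$ from Remark \ref{2.7}, and the vanishing of the lower-degree components via (\ref{eq3.6}), so that any nonzero element of a submodule is mapped to a nonzero multiple of $1$, from which all of $R$ is regenerated by cyclicity. Your argument is more elementary --- it needs no topology at all --- and fully suffices for the theorem as stated. What the paper's extra work buys is the density statement itself, which is reused downstream: in Theorem \ref{3.10}(1) to conclude $\cmdEnd_{\cmdB}(R) = k$, and in Proposition \ref{3.16} and Remark \ref{3.17}, where the completion $\hat{\rho} : \hat{\cmdB} \rightarrow \cmdEnd(R)$ being an isomorphism is essential. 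One caveat: your closing sentence, claiming that the ``same computation'' shows $\cmdIm \rho$ is dense in $\cmdEnd(R)$, is an overstatement; your computation gives transitivity from a single vector, and upgrading that to density requires either the paper's direct surjectivity argument or the Jacobson density theorem together with the separate fact that $\cmdEnd_{\cmdB}(R) = k$.
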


To prove the theorem, let first
\begin{equation*}
  \nu : R \otimes R^g \rightarrowtail \cmdEnd(R)
\end{equation*}
denote the canonical linear injection given by
\begin{equation*}
  \nu ( r \otimes f ) ( r' ) = r \langle f , r' \rangle \quad ( r , r' \in R , \ f \in R^g ) .
\end{equation*}
Since $R$ is a braided Hopf algebra, we have the right $R$-linear isomorphism
\begin{equation*}
  \chi : R \otimes R \cmdarrow^{\simeq} R \otimes R , \quad \chi ( r \otimes r' ) = r_{( 1 )} \otimes r_{( 2 )} r',
\end{equation*}
just as in the case of ordinary Hopf algebras.
The right $R$-linear dual induces a linear isomorphism,
\begin{equation*}
  \chi^{\vee} : \cmdEnd ( R ) \cmdarrow^{\simeq} \cmdEnd ( R ) .
\end{equation*}

\begin{lemma} \label{3.7}
  Let $ \mu $ denote the composite $ \chi^{\vee} \circ \nu $.
  \begin{enumerate}
    \item $ \mu : R \otimes R^g \rightarrow \cmdEnd ( R ) $ is the linear injection given by
    \begin{equation*}
      \mu ( r \otimes f ) ( r' ) = r \langle f , r'_{(1)} \rangle r'_{( 2 )} \quad ( r , r' \in R , \ f \in R^g ).
    \end{equation*}
    \item The composite
    \begin{equation*}
      \cmdB = R \otimes S \cmdarrow^{ \cmdid \otimes \tau_{R S}^r } R \otimes R^g \cmdarrow^{ \mu } \cmdEnd ( R )
    \end{equation*}
    coincides with $ \rho $.
  \end{enumerate}
\end{lemma}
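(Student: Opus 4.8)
The plan is to make the endomorphism $\chi^{\vee}$ completely explicit first, after which both assertions fall out by direct substitution. I would regard $R \otimes R$ as a right $R$-module via its second tensor factor, $(r \otimes r')\, t = r \otimes r' t$, and $R$ as a right $R$-module by multiplication. Then $\chi(r \otimes r') = r_{(1)} \otimes r_{(2)} r'$ is manifestly right $R$-linear, since acting by $t$ on the right simply multiplies $r'$ by $t$ on both sides (the braiding plays no role, as the action is on the outer factor). Because $R \otimes R$ is free as such a right $R$-module --- a $k$-basis of the first factor serves as an $R$-basis --- restriction to $R \otimes 1$ yields an identification
\[
  \cmdHom_R(R \otimes R, R) \cmdarrow^{\simeq} \cmdEnd(R), \qquad \Psi \mapsto \big( r \mapsto \Psi(r \otimes 1) \big),
\]
with inverse $\phi \mapsto \widetilde{\phi}$, where $\widetilde{\phi}(r \otimes r') = \phi(r)\, r'$. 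Under this identification $\chi^{\vee}$ is precomposition with $\chi$, so that
\[
  \chi^{\vee}(\phi)(r) = \widetilde{\phi}\big( \chi(r \otimes 1) \big) = \widetilde{\phi}(r_{(1)} \otimes r_{(2)}) = \phi(r_{(1)})\, r_{(2)} \qquad (r \in R).
\]

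For Part (1) I would substitute $\phi = \nu(r \otimes f)$, for which $\phi(r'') = r \langle f, r'' \rangle$, into this formula; this gives $\mu(r \otimes f)(r') = r \langle f, r'_{(1)} \rangle\, r'_{(2)}$ at once. Injectivity of $\mu$ is then immediate, since $\nu$ is injective by hypothesis and $\chi^{\vee}$ is a linear isomorphism, being induced by the isomorphism $\chi$ through precomposition and the free-module identification above.

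For Part (2) I would chase $r \otimes s$ through the composite. Evaluating $\mu \circ (\cmdid \otimes \tau_{RS}^{r})(r \otimes s)$ at $r' \in R$ and using $\langle \tau_{RS}^{r}(s), r'' \rangle = \tau(r'', s)$ gives
\[
  \mu\big( r \otimes \tau_{RS}^{r}(s) \big)(r') = r \langle \tau_{RS}^{r}(s), r'_{(1)} \rangle\, r'_{(2)} = r\, \tau(r'_{(1)}, s)\, r'_{(2)}.
\]
On the other hand $\rho(r \# s)(r') = r\, (s \triangleright r')$ equals the same expression by (\ref{eq3.4}) of Remark \ref{3.5}. Hence the composite coincides with $\rho$.

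The one delicate point is the first step: correctly reading the phrase ``right $R$-linear dual'' and confirming that $\chi$ is right $R$-linear for the second-factor action. This is exactly the fundamental-theorem (Galois) isomorphism for the braided Hopf algebra $R$, and once the identity $\chi^{\vee}(\phi)(r) = \phi(r_{(1)})\, r_{(2)}$ is established, everything else is a one-line computation.
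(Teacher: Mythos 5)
Your proposal is correct and is exactly the routine unwinding that the paper leaves to the reader (its proof is simply ``This is easy to prove''): interpreting $\chi^{\vee}$ via the identification of $\cmdEnd(R)$ with the right $R$-linear maps $R\otimes R\to R$, obtaining $\chi^{\vee}(\phi)(r)=\phi(r_{(1)})\,r_{(2)}$, and then matching with $\rho$ through the formula $s\triangleright r=\tau(r_{(1)},s)\,r_{(2)}$ of Remark \ref{3.5}. Nothing further is needed.
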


This is easy to prove.

Here, recall that a vector space $X$ given a topology is called a {\it topological vector space} \cite{T},
if the translation $ y \mapsto x + y $ by each $ x \in X $ is continuous
and if $X$ has a basis of neighborhoods of $0$ consisting of (vector) subspaces.
The topological vector spaces which we will encounter in this paper are all Hausdorff.
An algebra is called a {\it topological algebra}, if it is a topological vector space and if the product is continuous.

Given a discrete vector space $X$,
present it as a directed union $ X = \bigcup_{\lambda} X_{\lambda} $ of finite-dimensional subspaces $ X_{\lambda} \subset X $.
Then we have the canonical isomorphism
\begin{equation*}
  \cmdEnd ( X ) \simeq \lim_{\displaystyle \mathop{\longleftarrow}_{\lambda}} \cmdHom ( X_{\lambda} , X ) .
\end{equation*}
Through this, regard $ \cmdEnd ( X ) $ as the projective limit of discrete $ \cmdHom ( X_{\lambda} , X ) $.
Then, $ \cmdEnd ( X ) $ turns into a complete topological algebra;
the thus introduced topology is independent of choice of presentations $ X = \bigcup_{\lambda} X_{\lambda} $.

\begin{proof}[Proof of Theorem \ref{3.6}]
  Regarding $ \cmdEnd ( R ) $ as a topological vector space, as above,
  we claim that the image of $ \rho $ is dense in $ \cmdEnd ( R ) $.
  Notice that $ \chi^{\vee} $ is an isomorphism of topological vector spaces since $ \chi ( C \otimes R ) = C \otimes R $ for every finite-dimensional subcoalgebra $ C \subset R $.
  It then suffices to prove that the image of the composite $ \nu_{\cmdB} := \nu \circ ( \cmdid \otimes \tau_{R S}^r ) $ is dense.
  By Remark \ref{2.7}, the assumptions imply that $ \tau_{R S}^l $ is injective.
  It follows that given a finite-dimensional subspace $ X \subset R $, the composite
  \begin{equation*}
    S \cmdarrow^{ \tau_{R S}^r } R^g \longrightarrow X^*
  \end{equation*}
  is surjective, whence so is the composite
  \begin{equation*}
    \cmdB = R \otimes S \cmdarrow^{ \nu_{ \cmdB } } \cmdEnd ( R ) \longrightarrow \cmdHom ( X , R ) ,
  \end{equation*}
  where the second arrows are the restriction maps.
  This proves the claim.
  Therefore, given a finite number of linearly independent elements $ r_1 , \cdots , r_n $ in $R$ and the same number of arbitrary elements $ r'_1 , \cdots , r'_n $ in $R$,
  there exists $ \beta $ in $ \cmdB $ such that $ \beta r_i = r'_i $ for all $ 1 \leq i \leq n $.
  This fact even in $ n = 1 $ proves that $R$ cannot include any non-trivial $ \cmdB $-submodule.
\end{proof}

\begin{definition} \label{3.8}
  Let $M$ be a left $ \cmdB $-module.
  We define a subspace $ M_0 $ of $M$ by
  \begin{equation*}
    M_0 = \{ m \in M \mid S ( 1 ) m = 0 \}.
  \end{equation*}
  We also define a linear map $ \kappa_M $ by
  \begin{equation} \label{eq3.8}
    \kappa_M : R \otimes M_0 \rightarrow M , \quad \kappa_M ( r \otimes m ) = r m .
  \end{equation}
\end{definition}

\begin{lemma} \label{3.9}
  Regard $ R \otimes M_0 $ as a left $ \cmdB $-module by letting $ \cmdB $ act on the factor $R$ in $ R \otimes M_0 $.
  Then, $ \kappa_M $ is $ \cmdB $-linear.
\end{lemma}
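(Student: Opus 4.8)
The plan is to reduce the assertion to a check on algebra generators. The set of $ \beta \in \cmdB $ for which $ \kappa_M ( \beta z ) = \beta \, \kappa_M ( z ) $ holds for all $ z \in R \otimes M_0 $ is visibly a subalgebra of $ \cmdB $, and $ \cmdB = R \# S $ is generated by $ R $ together with $ W = S ( 1 ) $ (recall that $ S $ is generated by $ S ( 1 ) $ and that $ \cmdB $ is generated by $ R $ and $ S $). Hence it suffices to treat the two cases $ \beta \in R $ and $ \beta = w \in W $. Since $ \cmdB $ acts on $ R \otimes M_0 $ only through the factor $ R $, so that $ \beta ( r \otimes m ) = \rho ( \beta ) ( r ) \otimes m $, the condition to be verified unwinds, for $ r \in R $ and $ m \in M_0 $, into
\begin{equation*}
  \rho ( \beta ) ( r ) \cdot m = \beta \cdot ( r \cdot m ) ,
\end{equation*}
where on the left the element $ \rho ( \beta ) ( r ) \in R $ acts on $ m $ through $ R \subset \cmdB $. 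For $ \beta = r_0 \in R $ there is nothing to prove: $ \rho ( r_0 ) ( r ) = r_0 r $ is multiplication in $ R $, and since $ R $ is a subalgebra of $ \cmdB $ the module axiom gives $ ( r_0 r ) \cdot m = r_0 \cdot ( r \cdot m ) $ immediately.

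The substance lies in the case $ \beta = w \in W $. On the left, $ \rho ( w ) ( r ) = w \triangleright r = \tau ( r_{ ( 1 ) } , w ) \, r_{ ( 2 ) } $ by Remark \ref{3.5}, so the left-hand side equals $ ( w \triangleright r ) \cdot m $. For the right-hand side I would expand the product $ w r $ inside $ \cmdB $. Viewing $ w = 1 \# w $ and $ r = r \# 1 $ in $ \cmdA \# \cmdH $ and applying the product formula of Lemma \ref{3.2}(2), together with the fact that $ w $ is primitive in $ S $ so that $ \Delta_{ \cmdH } ( w ) = w \otimes 1 + w_K \otimes w_W $ by (\ref{eq1.8}), I expect an expression of the shape
\begin{equation*}
  w r = ( w \triangleright r ) \, \# \, 1 + \tau ( r_1 , w_K ) \, r_2 \, \# \, w_W ,
\end{equation*}
whose first term is the element $ w \triangleright r \in R \subset \cmdB $, and whose second term lies in $ R \otimes W $ because $ w_W \in S ( 1 ) = W $.

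It then remains to apply this to $ m \in M_0 $. The first term contributes $ ( w \triangleright r ) \cdot m $, matching the left-hand side, while the second term is annihilated: any summand of the form $ r' w' $ with $ w' \in W $ acts on $ m $ as $ r' \cdot ( w' \cdot m ) = 0 $, since $ W m = S ( 1 ) m = 0 $ by the very definition of $ M_0 $. Therefore $ ( w r ) \cdot m = ( w \triangleright r ) \cdot m $, which is exactly the desired identity, completing the generator check and hence the proof.

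I expect the only genuine obstacle to be the commutation computation for $ w r $: one must control this product precisely enough to isolate its leading term $ w \triangleright r $ and to confirm that the entire remainder sits in $ R \otimes W $, where the defining condition $ S ( 1 ) m = 0 $ of $ M_0 $ can be invoked. Once the coproduct $ \Delta_{ \cmdH } ( w ) = w \otimes 1 + w_K \otimes w_W $ and the smash-product formula of Lemma \ref{3.2}(2) are in hand, everything else is formal.
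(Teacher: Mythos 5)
Your argument is correct, but it takes a genuinely different route from the paper's. You verify $\cmdB$-linearity directly on algebra generators: since the set of $\beta$ with $\kappa_M(\beta z)=\beta\,\kappa_M(z)$ for all $z$ is a subalgebra and $\cmdB = R\#S$ is generated by $R$ together with $W=S(1)$, everything reduces to the commutation formula $wr = (w\triangleright r)\#1 + \tau(r_1,w_K)\,r_2\#w_W$, which does follow from Lemma \ref{3.2}(2) and the primitivity \eqref{eq1.8} of $w$ in $\cmdH = S\cmddotrtimes K$; moreover your worry about the remainder is unfounded, since $\Delta_{\cmdA}(r)=r_{(1)}r_{(2)J}\otimes r_{(2)R}$ has second leg in $R$, so the remainder indeed lies in $R\otimes W$ and is killed by $m\in M_0$. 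The paper argues abstractly instead: by the Verma-module presentation \eqref{eq3.7}, $\cmdB/\cmdI\simeq R$, the map $\varphi\mapsto\varphi(1)$ identifies $\cmdHom_{\cmdB}(R,M)$ with $M_0$ (this is \eqref{eq3.9}), and under this identification $\kappa_M$ becomes the evaluation map $R\otimes\cmdHom_{\cmdB}(R,M)\to M$, which is tautologically $\cmdB$-linear. Your computation is more elementary and makes transparent exactly where the defining condition $S(1)m=0$ enters—it is in effect a module-level version of the defining relation \eqref{eq3.3} and of \eqref{eq3.5}—while the paper's argument is shorter and yields the isomorphism \eqref{eq3.9} as a byproduct, which is reused later (for instance in the proof of Theorem \ref{3.10}(1)).
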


\begin{proof}
  Let $ \cmdHom_{\cmdB} ( R , M ) $ denote the vector space of all $ \cmdB $-linear maps $ \varphi : R \rightarrow M $.
  By (\ref{eq3.7}), $ \varphi \mapsto \varphi ( 1 ) $ gives the isomorphism
  \begin{equation} \label{eq3.9}
    \cmdHom {}_{\cmdB} ( R , M ) \cmdarrow^{\simeq} M_0 .
  \end{equation}
  Moreover, $ \kappa_M $ is identified with the evaluation map $ R \otimes \cmdHom_{\cmdB} ( R , M ) \rightarrow M $, which is obviously $ \cmdB $-linear.
\end{proof}

\begin{theorem} \label{3.10}
  Under the same assumptions as in Theorem \ref{3.6}, we have the following.
  \begin{enumerate}
    \item $ R_0 = k $.
    \item $ \kappa_M $ is injective for every left $ \cmdB $-module $M$.
  \end{enumerate}
\end{theorem}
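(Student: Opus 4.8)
The plan is to funnel both parts into the left non-degeneracy of the restricted pairing. By Remark \ref{2.7}, the standing hypotheses ($\tau_1^l$ injective, $R$ the Nichols algebra of $V$) are precisely what makes $\tau_{R S}^{l} : R \to S^g$ injective; concretely, if $\tau(r,s)=0$ for all $s \in S$ then $r=0$. The device linking the $\cmdB$-action to this pairing is the identity $\varepsilon(s \triangleright r) = \tau(r,s)$ for $r \in R$, $s \in S$, which drops out of (\ref{eq3.4}) and the counit axiom for $\Delta_R$, together with its graded refinement: if $r \in R(N)$ and $s \in S(N)$, then $s \triangleright r \in R(0)=k$ equals $\tau(r,s)\,1$ (apply $\varepsilon$ to the homogeneous element $s\triangleright r$).

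For part (1), I would first note that $R_0 = \bigcap_{w \in W} \ker(w \triangleright -)$ is a graded subspace, since each $w \triangleright (-)$ lowers $R$-degree by one, and that $R_0 \cap R(0) = k$ trivially. It remains to kill $R_0 \cap R(n)$ for $n>0$. The key structural remark is that $S$ acts on $R$ through $\triangleright$ as an algebra (because $S \hookrightarrow \cmdB$ is a subalgebra and $R$ is a $\cmdB$-module), while $S$ is generated in degree one, so its augmentation ideal satisfies $S_+ = S \cdot W$. Hence for $r \in R_0$ and any $s = \sum_i s'_i w_i \in S_+$ we get $s \triangleright r = \sum_i s'_i \triangleright (w_i \triangleright r) = 0$; thus $S_+ \triangleright r = 0$, and the identity above gives $\tau(r,s)=0$ on $S_+$. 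On $S(0)$ one has $\tau(r,1)=\varepsilon(r)=0$ when $r$ is homogeneous of positive degree, so $\tau(r,-)$ vanishes on all of $S$ and $r=0$ by injectivity of $\tau_{R S}^{l}$. This forces $R_0 = k$.

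For part (2), I would run a top-degree argument on $R \otimes M_0 = \bigoplus_n R(n) \otimes M_0$, exploiting that $\kappa_M$ is $\cmdB$-linear (Lemma \ref{3.9}). Assume $0 \ne \xi \in \ker \kappa_M$ and let $N$ be its highest nonzero $R$-degree, with component $\xi_N = \sum_j a_j \otimes m_j$, where the $m_j \in M_0$ are linearly independent and some $a_j \in R(N)$ is nonzero. Applying $s \in S(N)$ and using (\ref{eq3.6}), every lower component is annihilated ($s \triangleright R(n) \subseteq R(n-N)=0$ for $n<N$), while on the top component $s \triangleright a_j = \tau(a_j,s)\,1$; hence $s \cdot \xi = \sum_j \tau(a_j,s)\,(1 \otimes m_j)$. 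Since $\kappa_M(s \cdot \xi) = s \cdot \kappa_M(\xi) = 0$ and $\kappa_M(1 \otimes m_j) = m_j$, the linear independence of the $m_j$ yields $\tau(a_j,s)=0$ for every $s \in S(N)$, hence for every $s \in S$ by Corollary \ref{2.4}(2). Injectivity of $\tau_{R S}^{l}$ then gives $a_j = 0$ for all $j$, contradicting $\xi_N \ne 0$; so $\kappa_M$ is injective.

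The proof is structural rather than computational, and the main thing to get right is the reduction to non-degeneracy. For (1) the delicate point is that the single family of relations $w \triangleright r = 0$ ($w \in W$) propagates to all of $S_+$, which rests on $S_+ = S \cdot W$ and on $S$ acting as an algebra; for (2) it is the observation that one well-chosen homogeneous degree suffices, since applying $S(N)$ simultaneously collapses everything below degree $N$ and reads the top component off through $\tau$. Once the identity $\varepsilon(s \triangleright r) = \tau(r,s)$ and Remark \ref{2.7} are in place no further input is needed; in particular I expect (2) to be logically independent of (1).
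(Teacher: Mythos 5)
Your proposal is correct, and it takes a genuinely different route from the paper's. The paper deduces both parts from Theorem \ref{3.6} plus module-theoretic generalities: for (1), since $R$ is simple, Schur's Lemma makes $D = \cmdEnd_{\cmdB}(R)$ a division algebra; since $\cmdIm \rho \subset \cmdEnd_D(R)$ and $\cmdIm\rho$ is dense in $\cmdEnd(R)$ (from the proof of Theorem \ref{3.6}), one gets $D = k$, and then (1) follows from the identification (\ref{eq3.9}) of $M_0$ with $\cmdHom_{\cmdB}(R,M)$ applied to $M = R$. For (2), the paper invokes an external Jacobson-density-type result (\cite[Proposition 3.1]{AM}, or \cite[Proposition 12.5]{AMT}) asserting that the evaluation map $R \otimes \cmdHom_{\cmdB}(R,M) \rightarrow M$ is injective whenever $R$ is simple with $\cmdEnd_{\cmdB}(R) = k$. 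You instead bypass simplicity entirely, reducing both parts to the injectivity of $\tau_{RS}^{l}$ (Remark \ref{2.7}) through the identity $\varepsilon(s \triangleright r) = \tau(r,s)$ and the grading (\ref{eq3.6}): a graded argument using $S_+ = S\cdot W$ for (1), and a top-degree argument for (2); both steps check out, including the subtler points that $R_0$ is graded and that $s \triangleright a = \tau(a,s)1$ for $a \in R(N)$, $s \in S(N)$. What your route buys is a self-contained, elementary proof that needs neither Schur's Lemma, nor the density claim, nor the citations to \cite{AM}, \cite{AMT}, and which makes (2) logically independent of (1), as you observe. What the paper's route buys is brevity and structural clarity: it exhibits (1) and (2) as formal consequences of simplicity together with triviality of the endomorphism ring, a pattern that applies in ungraded settings as well; in effect, your computation unwinds explicitly, in the graded situation at hand, the density argument that the paper delegates to Theorem \ref{3.6} and the cited references.
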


\begin{proof}
  (1) By Theorem \ref{3.6}, $R$ is simple.
  By Schur's Lemma, $ D := \cmdEnd_{\cmdB} ( R ) $ is a division algebra,
  and the image $ \cmdIm \rho $ of $ \rho $ is included in $ \cmdEnd_D ( R ) $, which implies that $ \cmdEnd_D ( R ) $ is dense in $ \cmdEnd ( R ) $.
  But, $D$ then must equal $k$, which proves (1) by (\ref{eq3.9}).

  (2) Since $R$ is simple and $ k = \cmdEnd_{\cmdB} ( R ) $,
  it follows by \cite[Proposition 3.1]{AM} (or \cite[Proposition 12.5]{AMT}) that the evaluation map cited in the last proof above is injective, which proves (2).
\end{proof}

\subsection{}

In what follows until the end of this section we assume that $V$, $W$ are both finite-dimensional so that $R$, $S$ are locally finite.

\begin{definition} \label{3.11}
  A left $ \cmdB $-module $M$ is said to be {\rm integrable},
  if each element $ m \in M $ is annihilated by $ S ( n ) $, 
  where $ n ( > 0 ) $ is a sufficiently large integer that may depend on $m$.
  In the category of all left $ \cmdB $-modules, let $ \cmdO ( \cmdB ) $ denote the full subcategory consisting of the integrable modules;
  this is closed under sub, quotient and direct sum, whence it is abelian.
\end{definition}

The next lemma follows by (\ref{eq3.6}).

\begin{lemma} \label{3.12}
  The left $ \cmdB $-module $R$ is integrable.
  Moreover, if $X$ is a vector space, the left $ \cmdB $-module $ R \otimes X $, in which $ \cmdB $ acts on the factor $R$, is integrable.
\end{lemma}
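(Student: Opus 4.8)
The plan is to read off integrability directly from the grading recorded in (\ref{eq3.6}). The first step is to specialize that inclusion to the case $ l = 0 $. Since $ R ( 0 ) = k $ and the copy of $ S ( m ) $ sitting inside $ \cmdB $ is $ k \otimes S ( m ) $, the action of an element of $ S ( m ) $ on $ r' \in R $ is, by the formula for $ \rho $ in Proposition \ref{3.4}, precisely $ s \triangleright r' $. Setting $ l = 0 $ in (\ref{eq3.6}) then gives
\begin{equation*}
  S ( m ) \triangleright R ( n ) \subset R ( n - m ) ,
\end{equation*}
which vanishes as soon as $ m > n $, by the convention $ R ( j ) = 0 $ for $ j < 0 $.

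For the first assertion, I would take an arbitrary $ r \in R $ and use $ R = \bigoplus_{n \geq 0} R ( n ) $ to write $ r \in \bigoplus_{n = 0}^{N} R ( n ) $ for some $ N $. Then for every $ m > N $ and every homogeneous component $ r_n \in R ( n ) $ (with $ n \leq N $) of $ r $, the displayed inclusion gives $ S ( m ) \triangleright r_n \subset R ( n - m ) = 0 $; summing over $ n $ shows $ S ( m ) r = 0 $. Hence $ r $ is annihilated by $ S ( m ) $ for all $ m > N $, so $ R $ is integrable in the sense of Definition \ref{3.11}.

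The second assertion reduces to the first, because $ \cmdB $ acts only on the factor $ R $. An arbitrary element of $ R \otimes X $ is a finite sum $ \sum_{i} r_i \otimes x_i $; choosing $ N $ to bound the degrees of all the finitely many $ r_i $, the computation above gives $ S ( m ) \bigl( \sum_i r_i \otimes x_i \bigr) = \sum_i ( S ( m ) \triangleright r_i ) \otimes x_i = 0 $ for every $ m > N $. Thus $ R \otimes X $ is integrable as well.

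There is no genuine obstacle here: the statement is a formal consequence of the $ \cmdbbZ $-grading on the $ \cmdB $-module $ R $, together with the fact that $ S ( m ) $ strictly lowers degree. The only point worth stating with care is the reduction of the abstract notion of being annihilated by $ S ( m ) \subset \cmdB $ to the concrete action $ s \triangleright ( - ) $, which is exactly what (\ref{eq3.6}) controls.
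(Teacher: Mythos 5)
Your proof is correct and follows exactly the paper's route: the paper's entire justification for Lemma \ref{3.12} is that it ``follows by (\ref{eq3.6})'', and your argument is simply that observation carried out in detail (specializing $l=0$, bounding degrees, and passing to $R\otimes X$ componentwise).
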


Let $ \cmdVec $ denote the category of the vector spaces (over $k$).
We have functors,
\begin{eqnarray}
  ( \quad )_0 &:& M \mapsto M_0 , \quad \cmdO ( \cmdB ) \rightarrow \cmdVec, \label{eq3.10} \\
  R \otimes &:& X \mapsto R \otimes X , \quad \cmdVec \rightarrow \cmdO ( \cmdB ). \label{eq3.11}
\end{eqnarray}

\begin{theorem} \label{3.13}
  Assume that $ \tau_1 $ is non-degenerate, and $R$, $S$ are both Nichols algebras.
  Then the functors $ ( \quad )_0 $, $ R \otimes $ are quasi-inverses of each other,
  so that we have a category equivalence $ \cmdO ( \cmdB ) \approx \cmdVec $.
\end{theorem}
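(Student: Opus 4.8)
The plan is to exhibit the two functors of \eqref{eq3.10} and \eqref{eq3.11} as quasi-inverses by constructing natural isomorphisms $(R \otimes X)_0 \cong X$ and $\kappa_M : R \otimes M_0 \cmdarrow^{\simeq} M$. For the second, two of the three required properties are already available: $\kappa_M$ is $\cmdB$-linear by Lemma \ref{3.9}, and it is injective by Theorem \ref{3.10}(2); it is moreover visibly natural in $M$, being defined uniformly by $r \otimes m \mapsto rm$. Thus the entire problem reduces to (i) the easy ``unit'' isomorphism and (ii) the surjectivity of $\kappa_M$ for integrable $M$.

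First I would dispose of the unit isomorphism. Since $\cmdB$ acts on $R \otimes X$ only through the tensor factor $R$, an element $m = \sum_i r_i \otimes x_i$ with the $x_i$ linearly independent lies in $(R \otimes X)_0$ if and only if $w \triangleright r_i = 0$ for every $w \in S(1)$ and every $i$, that is, if and only if each $r_i \in R_0$. By Theorem \ref{3.10}(1) we have $R_0 = k$, so that $(R \otimes X)_0 = R_0 \otimes X = k \otimes X$, which is canonically $X$; this identification is clearly natural in $X$.

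The heart of the matter is the surjectivity of $\kappa_M$, for which I would pass to the completion. Recall that $\rho$ has dense image in $\cmdEnd(R)$ (proof of Theorem \ref{3.6}) and extends to an isomorphism $\hat{\rho} : \hat{\cmdB} \cmdarrow^{\simeq} \cmdEnd(R)$ of topological algebras. Let $e_0 \in \cmdEnd(R)$ be the projection of $R$ onto $R(0) = k$ along $\bigoplus_{n>0} R(n)$, and set $\Gamma := \hat{\rho}^{-1}(e_0) \in \hat{\cmdB}$; this is the extremal projector (to be introduced in Definition \ref{3.15}). Because $s \triangleright 1 = \varepsilon(s)1 = 0$ for $s \in S(1)$ by \eqref{eq3.4}, one checks that $S(1)\Gamma = 0$ in $\hat{\cmdB}$, so that $\Gamma$ maps any module into its $(\ )_0$-part. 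Now fix a homogeneous basis $(r^{(i)})_i$ of $R$ and, using that $\tau_{RS}$ is non-degenerate (Proposition \ref{2.6}), the dual basis $(s^{(i)})_i$ of $S$ with $\tau(r^{(j)}, s^{(i)}) = \delta_{ij}$. A short computation with \eqref{eq3.4} gives $e_0 \circ \rho(1 \# s^{(i)})(r) = \tau(r, s^{(i)})\,1$, whence in $\cmdEnd(R)$ the reconstruction $\cmdid_R = \sum_i \rho(r^{(i)} \# 1) \circ e_0 \circ \rho(1 \# s^{(i)})$ holds as a convergent sum. Pulling this back through $\hat{\rho}$ yields the identity $1 = \sum_i (r^{(i)} \# 1)\,\Gamma\,(1 \# s^{(i)})$ in $\hat{\cmdB}$.

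The main obstacle is then to make $\hat{\cmdB}$ act on an integrable module $M$ so that this identity may be applied term by term. I would show that the open left ideals $\hat{\cmdI}_N := \hat{\rho}^{-1}\bigl(\{\phi \mid \phi|_{\bigoplus_{n<N} R(n)} = 0\}\bigr)$ satisfy $\hat{\cmdI}_N \cap \cmdB = \cmdI_N$, where $\cmdI_N := R \otimes \bigoplus_{n \geq N} S(n)$ generalizes $\cmdI = \cmdI_1$; here the inclusion $\cmdI_N \subseteq \hat{\cmdI}_N$ is immediate because $S(n) \triangleright R$ lowers the grading of $R$ by $n$, cf.\ \eqref{eq3.6}, and the reverse inclusion is where the real work lies. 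Granting this, the $\cmdB$-action extends continuously to $\hat{\cmdB}$ exactly on the integrable modules, on each element of which $\cmdI_N$, and hence $\hat{\cmdI}_N$, acts by $0$ for $N \gg 0$. Applying the identity above to $m \in M$ then gives $m = \sum_i r^{(i)}\,\Gamma(s^{(i)} m)$; integrability forces $s^{(i)} m = 0$ as soon as $\cmddeg s^{(i)}$ exceeds the bound attached to $m$, so only finitely many terms survive, and each $\Gamma(s^{(i)} m) \in M_0$. Thus $m \in R \cdot M_0 = \cmdIm \kappa_M$, which proves surjectivity. Combining the two natural isomorphisms establishes the asserted equivalence $\cmdO(\cmdB) \approx \cmdVec$.
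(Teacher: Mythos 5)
Your proposal is correct and takes essentially the same route as the paper: the paper likewise reduces to surjectivity of $\kappa_M$, passes to the completion $\hat{\cmdB}\cmdarrow^{\simeq}\cmdEnd(R)$, introduces the extremal projector $\gamma$ (Definition \ref{3.15}, given there explicitly as $\sum_n\sum_{i_n}\cmdS(r_{i_n})\,\#\,s_{i_n}$, which is exactly your $\Gamma=\hat{\rho}^{-1}(e_0)$ since $\hat{\rho}(\gamma)(r)=\varepsilon(r)1$), proves the reconstruction identity $1=\sum_n\sum_{i_n}r_{i_n}\gamma s_{i_n}$ in $\hat{\cmdB}$ (Proposition \ref{3.16}(2)), and applies it to $m\in M$, using integrability to make the sum finite and $S(1)\gamma=0$ to place each $\gamma s_{i_n}m$ in $M_0$. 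The only cosmetic difference is that you define the projector by pulling back the rank-one projection through $\hat{\rho}$, while the paper writes it down explicitly via dual bases and the antipode of $R$ and then verifies it maps to that projection.
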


We have the natural maps
\begin{eqnarray*}
  \iota_X &:& X \rightarrow ( R \otimes X )_0 , \quad \iota_X ( x ) = 1 \otimes x, \\
  \kappa_M &:& R \otimes M_0 \rightarrow M, \mbox{ as given by (\ref{eq3.8}). }
\end{eqnarray*}
in $ \cmdVec $, $ \cmdO ( \cmdB ) $, respectively.
To prove the theorem above, notice that under (half of) the assumptions,
$ \iota_X $ is an isomorphism, and $ \kappa_M $ is injective, by Theorem \ref{3.10}.
It then remains to prove that $ \kappa_M $ is surjective.

\subsection{}

In this subsection we do not assume the assumptions above,
while we keep to assume that $V$, $W$ are finite-dimensional.

We will use the obvious identification $ \cmdB = \bigoplus_{n = 0}^{\infty} R \otimes S ( n ) $.
Set
\begin{equation*}
  \cmdI_n = \bigoplus_{l \geq n} R \otimes S ( l ) \quad ( n = 0 , 1 , \cdots ) 
\end{equation*}
in $ \cmdB $; this is the left ideal generated by $ S ( n ) $.
Regard $ \cmdB $ as a topological vector space which has $ \cmdI_n ( n = 0 , 1 , \cdots ) $ as a basis of neighborhoods of $0$.
Set
\begin{equation*}
  \hat{\cmdB} = \prod_{n = 0}^{\infty} R \otimes S ( n ) , \quad \hat{ \cmdI }_n = \prod_{l \geq n} R \otimes S ( l ) \quad ( n = 0 , 1 , \cdots ) .
\end{equation*}
These are the completions of $ \cmdB $, $ \cmdI_n $, respectively,
and their topologies coincide with the direct product of the discrete topologies on $ R \otimes S ( n ) $.

\begin{proposition} \label{3.14}
  \begin{enumerate}
    \item $ \cmdB $ is a topological algebra, whence as its completion,
    $ \hat{\cmdB} $ turns into a complete topological algebra.
    \item Let $ M \in \cmdO ( \cmdB ) $.
    Then the $ \cmdB $-module structure $ \cmdB \times M \rightarrow M $ uniquely extends to $ \hat{ \cmdB } \times M \to M $ so that each $ m \in M $ is annihilated by $ \hat{ \cmdI }_n $,
    where $ n ( > 0 ) $ is some integer that may depend on $m$.
    By this extended structure, $M$ is a left $ \hat{ \cmdB } $-module.
  \end{enumerate}
\end{proposition}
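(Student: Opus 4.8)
The plan is to deduce Part (1) from the continuity of the product on $\cmdB$, and Part (2) from a truncation argument; both rest on a single estimate describing how multiplication moves the left ideals $\cmdI_n$. The starting point is the grading. Since $\cmdB$ is $\cmdbbZ$-graded with $\cmddeg R(n)=n$, $\cmddeg S(n)=-n$, and since the commutation rule (\ref{eq3.3}) together with Corollary \ref{2.4}(2) — which forces every contraction to pair an $R$-degree with the equal $S$-degree — gives $S(b)R(c) \subseteq \bigoplus_{0 \le j \le \min(b,c)} R(c-j)\otimes S(b-j)$ (by induction on $b+c$ from the base case (\ref{eq3.3}), using that $R$, $S$ are generated in degree $1$), I would record two consequences: (i) $\cmdB\,\cmdI_n \subseteq \cmdI_n$, i.e.\ $\cmdI_n$ is a left ideal, as already noted; and (ii) for each fixed $y \in \cmdB$, if $d$ is the top $R$-degree occurring in $y$, then $\cmdI_p\,y \subseteq \cmdI_{p-d}$ for every $p$, because right multiplication by $y$ can lower the $S$-degree by at most $d$.

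Granting (i), (ii), continuity of the product at a point $(x,y)$ follows: given $n$, from $(x+\cmdI_p)(y+\cmdI_q) \subseteq xy + x\cmdI_q + \cmdI_p\,y + \cmdI_p\cmdI_q$ it suffices to take $q=n$ and $p=n+d$, for then $x\cmdI_q \subseteq \cmdB\cmdI_n \subseteq \cmdI_n$, $\cmdI_p\cmdI_q \subseteq \cmdB\cmdI_n \subseteq \cmdI_n$, and $\cmdI_p\,y \subseteq \cmdI_n$ by (ii). As $\bigcap_n \cmdI_n = 0$ the topology is Hausdorff, so $\cmdB$ is a topological algebra. Its completion is $\hat{\cmdB} = \varprojlim_n \cmdB/\cmdI_n = \prod_n R\otimes S(n)$, and the general fact that the completion of a Hausdorff topological algebra is a complete topological algebra — the product extending uniquely by continuity, with associativity and unitality passing to the limit by density — yields Part (1). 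I would flag that the $\cmdI_n$ are only left ideals, so $\hat{\cmdB}$ is not literally an inverse limit of quotient rings; its product comes from the continuity of the product on $\cmdB$, not from ring maps $\cmdB/\cmdI_{n'} \to \cmdB/\cmdI_n$.

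For Part (2), fix $M \in \cmdO(\cmdB)$ and $m \in M$. Integrability gives an $n_m$ with $S(n_m)m = 0$, and since $S$ is generated by $S(1)$ this yields $S(n)m = 0$ for all $n \ge n_m$, hence $\cmdI_{n_m}m = 0$. For $\hat{b} = \sum_{l \ge 0} b_l$ with $b_l \in R\otimes S(l)$ I would set $\hat{b}\cdot m := \sum_{l < n_m} b_l\,m$; this finite sum is independent of the admissible bound, extends the $\cmdB$-action, and is annihilated by $\hat{\cmdI}_{n_m}$. For uniqueness, give $M$ the discrete topology; then $b \mapsto bm$ is a continuous map $\cmdB \to M$ with open kernel containing $\cmdI_{n_m}$, so it extends uniquely to a continuous map $\hat{\cmdB}\to M$, necessarily the one above. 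The module axioms $(\hat{b}\hat{b}')m = \hat{b}(\hat{b}'m)$ and additivity then follow from the corresponding identities on the dense subset $\cmdB\times\cmdB$, both sides being continuous functions of $(\hat{b},\hat{b}')$ into the discrete Hausdorff space $M$.

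I expect the main obstacle to be estimate (ii): the fact that right multiplication by a fixed element lowers the $S$-degree only by a bounded amount. This is precisely what the degree-matching of Corollary \ref{2.4}(2) provides, and it is what makes the $\cmdI_n$-topology behave well enough under multiplication to admit a completion as an algebra. Once (ii) is in hand, the remainder — completing a topological algebra, and the truncation and density arguments of Part (2) — is formal.
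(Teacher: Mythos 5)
Your proposal is correct and takes essentially the same route as the paper: the paper's proof of (1) rests on exactly your two containments ($\beta\,\cmdI_n \subset \cmdI_n$ and $\cmdI_n\,\beta \subset \cmdI_{n-l}$ with $l$ the top $R$-degree of $\beta$), and its proof of (2) likewise gives $M$ the discrete topology, notes that integrability means each map $\beta \mapsto \beta m$ is continuous, and extends it to the completion $\hat{\cmdB}$. Your truncation formula and the density/continuity verification of the module axioms just make explicit what the paper leaves implicit.
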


\begin{proof}
  (1) Let $ \beta = \sum_{i} r_i \# s_i \in \cmdB $,
  with $ r_i \in R $, $ s_i \in S $ homogeneous.
  One then sees that for each $ n \geq 0 $, $ \beta \mathcal{I}_n \subset \mathcal{I}_n $, $ \mathcal{I}_n \beta \subset \mathcal{I}_{n - l} $,
  where $ l = \cmdMax \{ \cmddeg r_i \} $.
  It follows that the product on $ \cmdB $ is continuous, which proves (1).

  (2) Suppose that $M$ is discrete.
  The integrability is equivalent to that for each $ m \in M $,
  the map $ \cmdB \rightarrow M $ given by $ \beta \mapsto \beta m $ is continuous.
  The map has a completion $ \hat{ \cmdB } \rightarrow M $;
  let $ \hat{ \beta } m $ denote the image of $ \hat{ \beta } \in \hat{ \cmdB } $.
  We then see that $ ( \hat{ \beta } , m ) \mapsto \hat{ \beta } m $ gives the desired structure.
\end{proof}

By Part (2) above,
$ \cmdO ( \cmdB ) $ is now naturally identified with the category of those left $ \hat{ \cmdB } $-modules $M$ in which every element $ m \in M $ is annihilated by some $ \hat{ \cmdI }_n $.

\subsection{}

Here we assume that the assumptions of Theorem \ref{3.13} are satisfied,
or equivalently that $ \tau_{R S} : R \otimes S \rightarrow k $ is non-degenerate; see Proposition \ref{2.6}.
By Corollary \ref{2.4} (2), this last is equivalent to that $ \tau_{ R ( n ) S ( n ) } : R ( n ) \otimes S ( n ) \rightarrow k $ is non-degenerate for each $ n \geq 0 $.
Choose bases $ ( r_{i_n} ) $ of $ R ( n ) $, and $ ( s_{i_n}) $ of $ S ( n ) $ which are mutually dual with respect to $ \tau_{ R ( n ) S ( n ) } $.

\begin{definition} \label{3.15}
  Let $ \cmdS $ denote the antipode of the braided graded Hopf algebra $R$, and define
  \begin{equation*}
    \gamma = \sum_{n = 0}^{\infty} \sum_{ i_n } \cmdS ( r_{i_n} ) \# s_{i_{n}} \mbox{ in } \hat{\cmdB}.
  \end{equation*}
  We call this $ \gamma $ the {\rm extremal projector}.
\end{definition}

The definition above is independent of choice of dual bases,
as will be seen from the proof of Part (1) of the proposition below.

We remark that the antipode $\cmdS$ of $R$ is bijective.
This follows from the formula given in \cite[Proposition 2 (b)]{R}, since the antipode of $J$, and hence that of $\mathcal{A}$ are both bijective.

\begin{proposition} \label{3.16}
  \begin{enumerate}
    \item $ \gamma $ is an idempotent in $ \hat{\cmdB} $ such that
    \begin{equation*}
      S ( n ) \gamma = 0 = \gamma R ( n ) \mbox{ for all } n > 0.
    \end{equation*}
    \item The infinite sum $ \sum_{n = 0}^{\infty} \sum_{i_n} r_{i_n} \gamma s_{i_n} $
    converges in $ \hat{\cmdB} $ to the unit $1$.
    \item Let $ M \in \cmdO ( \cmdB ) $. Then,
    \begin{equation*}
      \gamma \cdot : M \rightarrow M_0 , \quad m \mapsto \gamma m
    \end{equation*}
    gives a projection from $ M $ onto the subspace $ M_0 $.
  \end{enumerate}
\end{proposition}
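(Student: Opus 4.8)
The plan is to route every computation through the algebra map $\rho\colon\cmdB\to\cmdEnd(R)$ of Proposition \ref{3.4}, extended along the completion to the topological algebra isomorphism $\hat\rho\colon\hat{\cmdB}\cmdarrow^{\simeq}\cmdEnd(R)$ (an isomorphism because $\cmdIm\rho$ is dense, by the proof of Theorem \ref{3.6}, and $\rho$ is injective). The heart of the matter is the single computation
\[
  \hat\rho(\gamma)(r')=\sum_{n}\sum_{i_n}\tau(r'_{(1)},s_{i_n})\,\cmdS(r_{i_n})\,r'_{(2)}=\cmdS(r'_{(1)})\,r'_{(2)}=\varepsilon(r')\,1\qquad(r'\in R),
\]
which says $\hat\rho(\gamma)=e$, where $e\in\cmdEnd(R)$ is the projection $r'\mapsto\varepsilon(r')1$ onto $R(0)=k$ along the augmentation. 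Here I use (\ref{eq3.4}) for $s_{i_n}\triangleright r'$, the fact that $(r_{i_n})$, $(s_{i_n})$ are $\tau$-dual bases (so $\sum_{i_n}\tau(x,s_{i_n})\cmdS(r_{i_n})=\cmdS(x)$ for $x\in R(n)$), Corollary \ref{2.4}(2) to see that for each $n$ only the degree-$n$ part of $r'_{(1)}$ contributes, and finally the braided antipode axiom $\cmdS(r'_{(1)})r'_{(2)}=\varepsilon(r')1$. Since $e$ is visibly basis-independent and $\hat\rho$ is injective, this simultaneously shows $\gamma$ does not depend on the choice of dual bases.

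Part (1) is then immediate. Idempotency follows from $\hat\rho(\gamma^2)=e\circ e=e=\hat\rho(\gamma)$. For the annihilation relations, if $s\in S(n)$ with $n>0$ then $\hat\rho(s\gamma)=\rho(s)\circ e=0$, because $e$ takes values in $k\cdot 1$ and $s\triangleright 1=\varepsilon(s)1=0$; similarly, if $r\in R(n)$ with $n>0$ then $\hat\rho(\gamma r)=e\circ\rho(r)$ sends $r'$ to $\varepsilon(rr')1=\varepsilon(r)\varepsilon(r')1=0$. Injectivity of $\hat\rho$ gives $S(n)\gamma=0=\gamma R(n)$.

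For Part (2), put $\Gamma=\sum_{n}\sum_{i_n}r_{i_n}\gamma s_{i_n}$. Using $\hat\rho(\gamma)=e$ and (\ref{eq3.4}) one computes, for $r'\in R(m)$, that $\hat\rho(r_{i_n}\gamma s_{i_n})(r')=\tau(r',s_{i_n})\,r_{i_n}$, so $\sum_{i_n}\hat\rho(r_{i_n}\gamma s_{i_n})$ is the projection of $R$ onto $R(n)$ (again by Corollary \ref{2.4}(2) and $\tau$-duality); hence $\hat\rho(\Gamma)=\sum_n(\text{projection onto }R(n))=\cmdid=\hat\rho(1)$. Convergence of $\Gamma$ in $\hat{\cmdB}$ is separate and elementary: left multiplication by $R$ preserves the $S$-degree, and $\gamma s_{i_n}\in\hat{\cmdI}_n$ since each term of $\gamma$ ends in some $s_{j_l}$ with $s_{j_l}s_{i_n}\in S(l+n)$; thus $\sum_{i_n}r_{i_n}\gamma s_{i_n}\in\hat{\cmdI}_n\to 0$. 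As $\hat\rho$ is a continuous isomorphism, $\Gamma=1$.

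Part (3) rests on Parts (1), (2) and the injectivity of $\kappa_M$ (Theorem \ref{3.10}(2)). First, $\gamma m\in M_0$ for every $m$, since $S(1)\gamma=0$ yields $S(1)\gamma m=0$; with idempotency this shows $\gamma\cdot$ is a projection of $M$ onto its image, which lies in $M_0$. It remains to prove $\gamma m=m$ for $m\in M_0$, and this is the delicate point: $M_0$ need \emph{not} be annihilated by $S(n)$ for $n>1$, so the higher terms of $\gamma m$ do not vanish individually. Instead I apply the identity $1=\Gamma$ to $m$ (legitimate since $M$ is a $\hat{\cmdB}$-module and the sum is finite by integrability), obtaining $m=\gamma m+\sum_{n>0}\sum_{i_n}r_{i_n}\,\gamma(s_{i_n}m)$, where each $\gamma(s_{i_n}m)$ again lies in $M_0$. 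Realizing $m-\gamma m$ and the right-hand sum as $\kappa_M$ of the elements $1\otimes(m-\gamma m)$ and $\sum_{n>0}\sum_{i_n}r_{i_n}\otimes\gamma(s_{i_n}m)$ of $R\otimes M_0$, injectivity of $\kappa_M$ forces these two elements to agree; but one lies in $R(0)\otimes M_0$ and the other in $\bigoplus_{n>0}R(n)\otimes M_0$, so by the grading of $R$ both vanish, whence $m-\gamma m=0$. The \textbf{main obstacle} is thus the core identity $\hat\rho(\gamma)=e$ (the braided-antipode and dual-basis bookkeeping), and, in Part (3), replacing the non-vanishing of the individual higher terms by the global cancellation supplied by $\kappa_M$.
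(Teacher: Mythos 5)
Your proof is correct, and Parts (1) and (2) are essentially the paper's own argument: both rest on the computation $\hat{\rho}(\gamma)(r)=\cmdS(r_{(1)})r_{(2)}=\varepsilon(r)1$, the injectivity and continuity of $\hat{\rho}$, and the observation $r_{i_n}\gamma s_{i_n}\in\hat{\cmdI}_n$ for convergence. Part (3), however, takes a genuinely different route, and your stated reason for the detour is a misconception. You claim that $M_0$ need not be annihilated by $S(n)$ for $n>1$; in fact it must be. Since $S$ is a pre-Nichols algebra, it is generated by $S(1)$ (property (\ref{eq1.7})), so $S(n)=S(n-1)\cdot S(1)$ for every $n\geq 1$, and $S(1)m=0$ forces $S(n)m=S(n-1)\bigl(S(1)m\bigr)=0$ for all $n>0$. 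This is exactly what the paper uses: for $m\in M_0$ every $s_{i_n}$ with $n>0$ kills $m$, so the defining series of $\gamma$ collapses to its degree-zero term and $\gamma m=m$ in one line. Your substitute argument --- applying $\Gamma=1$ to $m$, rewriting both sides as values of $\kappa_M$ on elements of $R\otimes M_0$, and using the injectivity of $\kappa_M$ (Theorem \ref{3.10}(2)) together with the grading of $R$ to force $m-\gamma m=0$ --- is logically valid as written: each $\gamma(s_{i_n}m)$ lies in $M_0$, the sum is finite by integrability, and $m-\gamma m\in M_0$ by Part (1). But it is needlessly heavy, since Theorem \ref{3.10}(2) rests on the simplicity of $R$ and the results cited from \cite{AM}, \cite{AMT}, whereas the generation-in-degree-one observation costs nothing. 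In short: same core as the paper in (1) and (2); in (3) a correct but roundabout argument motivated by a false belief about $M_0$.
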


\begin{proof}
  (1) Since $ \tau_{R S}^r : S \rightarrow R^g $ is now an isomorphism,
  we see from the proof of Theorem \ref{3.6} that $ \rho : \cmdB \rightarrow \cmdEnd ( R ) $ is injective.
  Moreover, this is continuous, and its completion gives an isomorphism
  \begin{equation*}
    \hat{\rho} : \hat{\cmdB} \cmdarrow^{\simeq} \cmdEnd ( R )
  \end{equation*}
  of topological algebras.
  We see
  \begin{eqnarray*}
    \hat{\rho} ( \gamma ) ( r ) &=& \sum_{n = 0}^{\infty} \sum_{i_n} \cmdS ( r_{i_n} ) \tau ( r_{( 1 )}, s_{i_n} ) r_{( 2 )} \\
                                &=& \cmdS ( r_{( 1 )} ) r_{( 2 )} = \varepsilon ( r ) 1,
  \end{eqnarray*}
  so that $ \hat{\rho} ( \gamma ) $ equals the projection $ R \rightarrow R $ given by $ r \mapsto \varepsilon ( r ) 1 $.
  This proves (1).
  
  (2) This follows, since $ r_{i_n} \gamma s_{i_n} \in \hat{\cmdI}_n $, and we see
  \begin{equation*}
    \sum_{n = 0}^{\infty} \sum_{i_n} \hat{\rho} ( r_{i_n} \gamma s_{i_n} ) ( r ) = \sum_{n = 0}^{\infty} \sum_{i_n} r_{i_n} \tau ( r_{( 1 )} , s_{i_n} ) \varepsilon ( r_{( 2 )} ) = r.
  \end{equation*}
  
  (3) If $ m \in M $, $ s \in S ( 1 ) $, then $ s ( \gamma m ) = ( s \gamma ) m  = 0 $ by (1), whence $ \gamma m \in M_0 $.
  If $ m \in M_0 $, then it is annihilated by all $ s_{i_n} $, where $ n > 0 $, and hence $ \gamma m = m $.
  These two prove (3).
\end{proof}

\begin{proof}[Proof of Theorem 3.13]
  To prove the remaining surjectivity of $ \kappa_M $, let $ m \in M \ ( \in \cmdO ( \cmdB ) ) $.
  Then by Part (2) above,
  \begin{equation} \label{eq3.12}
    m = \sum_{n = 0}^{\infty} \sum_{i_n} r_{i_n} \gamma s_{i_n} m .
  \end{equation}
  Notice that by the integrability, this infinite sum is essentially finite.
  Since $ \gamma s_{i_n} m \in M_0 $ by Part (3) above, we see $ m \in R M_0 $,
  which proves the desired surjectivity.
\end{proof}

\begin{remark} \label{3.17}
  In the situation of Theorem \ref{3.13}, assume that one of $R$, $S$ is finite-dimensional.
  Then both of them, and so $ \cmdB $ are all finite-dimensional.
  Moreover one sees from the proof of Theorem \ref{3.6} that $ \rho $ gives an isomorphism $ \displaystyle \cmdB \cmdarrow^{\simeq} \cmdEnd ( R ) $,
  whence $ \cmdB $ is a matrix algebra, up to isomorphism.
  The category $ \cmdO ( \cmdB ) $ consists of all left $ \cmdB $-modules, and the category equivalence $ \cmdO ( \cmdB ) \approx \cmdVec $ given by Theorem \ref{3.13} coincides with the familiar one.
\end{remark}

\section{Generalized $q$-boson algebras associated to generalized Kac-Moody algebras}

Throughout this section we assume that the characteristic of $k$ is zero, and let $ q \in k \setminus 0 $.

\subsection{}

We assume that $q$ is transcendental over $ \cmdbbQ $.
Let $I$ be a countable index set, $ \cmdbbA = ( a_{i j} )_{ i , j \in I } $ a symmetrizable Borcherds-Cartan matrix,
and $ \cmdbbm = ( m_i \mid i \in I ) $ a sequence of positive integers.
Kang \cite{Kan} defined the quantized enveloping algebra $ U = U_q $ associated to $ \cmdbbA $, $ \cmdbbm $.
We refer to \cite{KT} for the definition, and let the symbols
\begin{equation*}
  s_i , \ \alpha_i , \ ( h \mid h' ) , \ \xi_i , \ q^h , \ e_{i k} , \ f_{i k} , \ K_i , \ U^0 , \ U^{\pm} , \ U^{\geq 0} , \ U^{\leq 0}
\end{equation*}
be the same as given in \cite[pp. 348-350]{KT}.
In particular, the zero part $ U^0 $ of $U$ is the group algebra $ k P^{\vee} $ of the group $ P^{\vee} $ of the dual weight lattice as given in \cite[(1.1)]{KT}.
To apply the results obtained in the preceding sections,
we will work in the special situation that $ J = K = U^0 $.
Set
\begin{equation*}
  f'_{i k} = - \xi_i f_{i k} K_i \quad ( \in U^{\leq 0} ),
\end{equation*}
and let $V$ denote the subspace of $ U^{\leq 0} $ spanned by all $ f'_{i k} $.
Let $W$ denote the subspace of $ U^{+} $ spanned by all $ e_{i k} $.
Then, $V$, $W$ turn into objects in $ {}_{ U^0 }^{ U^0 } \cmdYD $ with respect to the structures
\begin{eqnarray*}
  q^h \cmdrhu e_{i k}  &=& q^{ \alpha_i ( h ) } e_{i k} , \quad e_{i k} \mapsto K_i \otimes e_{i k} , \\
  q^h \cmdrhu f'_{i k} &=& q^{ - \alpha_i ( h ) } f'_{i k} , \quad f'_{i k} \mapsto K_i \otimes f'_{i k} ,
\end{eqnarray*}
where $ h \in P^{\vee} $, $ i \in I $, $ 1 \leq k \leq m_i $.
Let $R$ denote the subalgebra of $ U^{\leq 0} $ generated by $V$.
Let $S$ stand for $ U^+ $, which is by definition the subalgebra of $ U^{\geq 0} $ generated by $W$.
We then see that $R$, $S$ are naturally pre-Nichols algebras of $V$, $W$, respectively, so that
\begin{equation*}
  R \cmddotrtimes U^0 = U^{\leq 0} , \quad S \cmddotrtimes U^0 = U^{\geq 0}.
\end{equation*}
Notice that if $ \overline{ \cmdS } $ denotes the pode of $ U^{\leq 0} $,
\begin{equation*}
  \overline{\cmdS} ( f'_{i k} ) = \xi_i f_{i k} , \quad ( \overline{R} = ) \overline{\cmdS} ( R ) = U^- .
\end{equation*}
By \cite[Proposition 2.1]{KT} (or our Proposition \ref{2.1}), the pairing $ \tau_0 : U^0 \otimes U^0 \rightarrow k $ given by
\begin{equation*}
  \tau_0 ( q^h , q^{h'} ) = q^{ - ( h \mid h' ) } \quad ( h , h' \in P^{\vee} )
\end{equation*}
uniquely extends to a skew pairing $ \tau : U^{\leq 0} \otimes U^{\geq 0} \rightarrow k $ so that
\begin{eqnarray*}
  & \tau ( q^h , e_{i k} ) = 0 = \tau ( f'_{i k} , q^h ) , & \\
  & \tau ( f'_{j l} , e_{i k} ) = \delta_{i j} \delta_{k l} &
\end{eqnarray*}
where $ h \in P^{\vee} $, $ i , j \in I $, $ 1 \leq k \leq m_i $, $ 1 \leq l \leq m_j $.
As is seen from the remark following Proposition \ref{2.1} (or seen directly),
the restriction $ \tau_1 : V \otimes W \rightarrow k $ of $ \tau $ satisfies (\ref{eq2.1}), (\ref{eq2.2}), and is obviously non-degenerate.
Let $ \cmdB $ denote the associated generalized $q$-boson algebra.
Then, $ \cmdB $ is generated by all $ f'_{i k} $, $ e_{i k} $, and is defined by the relations (R5) - (R8) in \cite[p.349]{KT},
in which $ f_{i k} $ should read $ f'_{i k} $, and by
\begin{equation*}
  e_{i k} f'_{j l} = q_{i j} f'_{j l} e_{i k} + \delta_{i j} \delta_{k l},
\end{equation*}
where $ i , j \in I $, $ 1 \leq k \leq m_i $, $ 1 \leq l \leq m_j $, and we have set
\begin{equation} \label{eq4.1}
  q_{i j} = q^{ - s_i a_{i j} } ( = q^{ - s_j a_{j i} } ) .
\end{equation}
By \cite[Theorem 2.5]{KT}, the skew pairing $ \tau $ above is non-degenerate,
whence $R$, $S$ are Nichols algebras, by Proposition \ref{2.6} (1).

As was seen in \ref{3.2}, $R$ has a natural left $ \cmdB $-module structure,
in which each $ f'_{i k} $ in $ \cmdB $ acts by the left multiplication.
One sees from (\ref{eq3.5}) that $ e_{i k} $ acts so as
\begin{equation} \label{eq4.2}
  e_{i k} \triangleright f'_{j_1 l_1} \cdots f'_{j_t l_t} = \sum_{p = 1}^t \delta_{i j_p} \delta_{k l_p} q_{i j_1} \cdots q_{i j_{p - 1}} f'_{j_1 l_1} \cdots \widehat{f}_{j_p l_p}' \cdots f'_{j_t l_t} ,
\end{equation}
where $ q_{i j} $ is such as given in (\ref{eq4.1}), and $ \ \widehat{} \ $ means an omission.
By Theorems \ref{3.6}, \ref{3.10}, we have the following.

\begin{theorem} \label{4.1}
  \begin{enumerate}
    \item $R$ is simple as a left $ \cmdB $-module.
    \item Given a left $ \cmdB $-module $M$, set
    \begin{equation*}
      M_0 = \{ m \in M \mid e_{i k} m = 0 \mbox{ for all } i \in I , 1 \leq k \leq m_i \}.
    \end{equation*}
    Then,
    \begin{equation*}
      \kappa_M : R \otimes M_0 \rightarrow M , \quad \kappa_M ( r \otimes m ) = r m
    \end{equation*}
    is a $ \cmdB $-linear injection.
    \item In particular, $ R_0 = k $.
  \end{enumerate}
\end{theorem}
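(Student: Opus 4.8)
The plan is to obtain all three parts as immediate applications of Theorems \ref{3.6} and \ref{3.10}, once I check that their hypotheses hold in the present situation. Those theorems require only that $\tau_1^l : V \to W^*$ be injective and that $R$ be the Nichols algebra of $V$, and both have already been secured above: the restricted pairing $\tau_1 = \tau_{VW}$ was noted to be non-degenerate, so $\tau_1^l$ is injective; and the non-degeneracy of the full $\tau$, coming from \cite[Theorem 2.5]{KT}, forces $R$ (and $S$) to be a Nichols algebra by Proposition \ref{2.6} (1).

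The key observation I would make is that Theorems \ref{3.6} and \ref{3.10} were proved in the subsection preceding the blanket assumption that $V$, $W$ be finite-dimensional, and that neither their statements nor their proofs invoke any such finiteness. They therefore remain applicable here, even though $I$ may be infinite and $V$, $W$, and hence each $R(n)$, $S(n)$, may be infinite-dimensional; this is exactly the refinement anticipated in the Introduction. Granting this, part (1) is Theorem \ref{3.6} and part (3) is Theorem \ref{3.10} (1). For part (2) I would first note that the $M_0$ defined here by the conditions $e_{ik} m = 0$ coincides with the $M_0$ of Definition \ref{3.8}, since $S(1) = W$ is spanned by the $e_{ik}$; then $\kappa_M$ is $\cmdB$-linear by Lemma \ref{3.9} and injective by Theorem \ref{3.10} (2).

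The one point demanding care, and thus the main obstacle, is to certify the claimed independence of Theorems \ref{3.6}, \ref{3.10} from finite-dimensionality. I would re-inspect their proofs: the density argument for Theorem \ref{3.6} runs over finite-dimensional subspaces $X \subset R$ and over finite-dimensional subcoalgebras, not over the graded components themselves, while the map $\tau_{RS}^r : S \to R^g$ is well defined thanks to the diagonal vanishing of $\tau$ in Corollary \ref{2.4} (2); and the proof of Theorem \ref{3.10} rests only on Schur's Lemma together with \cite[Proposition 3.1]{AM}. None of these uses $R(n)$ or $S(n)$ finite-dimensional, so I expect no genuine difficulty, only the task of confirming that no hidden finiteness slipped in. I would close by remarking that the stronger equivalence $\cmdO ( \cmdB ) \approx \cmdVec$ of Theorem \ref{3.13} is not available at this generality, since the extremal projector of Definition \ref{3.15} needs dual bases of the finite-dimensional components; correspondingly the present statement asserts only injectivity, not surjectivity, of $\kappa_M$.
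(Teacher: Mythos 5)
Your proposal is correct and takes essentially the same route as the paper, whose entire proof is the sentence ``By Theorems \ref{3.6}, \ref{3.10}'' following the verification that $\tau_1$ is non-degenerate and that $R$, $S$ are Nichols algebras (via \cite[Theorem 2.5]{KT} and Proposition \ref{2.6}(1)); your observation that Theorems \ref{3.6} and \ref{3.10} precede the finite-dimensionality assumption of Subsection 3.4 and nowhere use it is exactly the point the paper relies on implicitly. The only slight inaccuracy is your closing aside: the equivalence $\cmdO(\cmdB)\approx\cmdVec$ is in fact obtained in this generality (Theorem \ref{4.4}), by modifying the definition of integrability and the extremal-projector argument (Remark \ref{4.5}), rather than being unavailable.
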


\subsection{}

Keep $ \cmdB $ as above. Since $V$, $W$ can be infinite-dimensional, we need to modify the definition of integrable $ \cmdB $-modules.

\begin{definition} \label{4.2}
  A left $ \cmdB $-module $M$ is said to be {\rm integrable},
  if for each $ m \in M $, there exist an integer $ n > 0 $ and a finite subset $ F \subset I $ such that $m$ is annihilated by the product $ e_{i_1 k_1} \cdots e_{i_t k_t} $ of length $ t > 0 $,
  if $ t \geq n $, or if $ i_p \not \in F $ for some $ 1 \leq p \leq t $.
  In the category of all left $ \cmdB $-modules, let $ \cmdO ( \cmdB ) $ denote the full subcategory consisting of the integrable modules;
  this is abelian, being closed under sub, quotient and direct sum.
\end{definition}

If the index set $I$ is finite, the definition above coincides with Definition \ref{3.11}.

\begin{lemma} \label{4.3}
  The left $ \cmdB $-module $R$ is integrable.
\end{lemma}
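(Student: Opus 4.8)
The plan is to read everything off from the explicit action formula (\ref{eq4.2}), which shows that each $e_{ik}$ acts on a monomial in the $f'$'s by deleting one matching factor. Two features of that formula are decisive. First, $e_{ik}$ lowers the $\cmdbbN$-grading of $R$ by one: it sends $R(s)$ into $R(s-1)$. Second, applying $e_{ik}$ to a monomial produces a linear combination of submonomials obtained by omitting a factor, so the set of indices $i\in I$ that actually occur can only shrink under the action; in particular $e_{ik}$ annihilates any monomial in which the index $i$ does not occur at all.

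First I would fix $r\in R$ and, using $R=\bigoplus_{s\ge 0}R(s)$, write $r$ as a finite sum of homogeneous components, of top degree $N$ say. Since $R(s)=0$ for $s<0$, any product $e_{i_1k_1}\cdots e_{i_tk_t}$ of length $t>N$ maps each homogeneous component of $r$ into a component of negative degree, hence annihilates $r$. Thus the integer $n:=N+1$ takes care of the first alternative in Definition \ref{4.2}.

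Next I would produce the finite set $F$. Expand $r$ as a finite linear combination of monomials $f'_{j_1l_1}\cdots f'_{j_sl_s}$, and let $F\subset I$ be the (necessarily finite) set of all indices $j$ occurring in these monomials. Consider a product $e_{i_1k_1}\cdots e_{i_tk_t}$ for which $i_p\notin F$ for some $p$, acting as $e_{i_1k_1}\triangleright(\cdots\triangleright(e_{i_tk_t}\triangleright r))$. The operators $e_{i_{p+1}k_{p+1}},\dots,e_{i_tk_t}$ to the right of the $p$-th one act first, and by the second feature above they yield a linear combination of monomials all of whose occurring indices still lie in $F$. When $e_{i_pk_p}$ with $i_p\notin F$ is then applied, every surviving term has $\delta_{i_p j}=0$ for each of its indices $j$, so the whole expression is killed, and therefore so is its image under the remaining operators $e_{i_1k_1},\dots,e_{i_{p-1}k_{p-1}}$. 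This settles the second alternative, and together the two points verify integrability in the sense of Definition \ref{4.2}.

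I do not expect a serious obstacle here: the statement follows directly from (\ref{eq4.2}). The only point requiring care is the bookkeeping of the order of composition in the second step, namely confirming that by the time the out-of-$F$ operator $e_{i_pk_p}$ is reached, the indices still occurring remain confined to $F$. This is immediate from the observation that (\ref{eq4.2}) only deletes factors and never introduces new indices.
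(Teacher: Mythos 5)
Your proposal is correct and follows exactly the paper's route: the paper's proof of Lemma \ref{4.3} consists of the single remark that the statement ``is seen from (\ref{eq4.2})'', and your argument is precisely the careful write-out of that observation (degree lowering handles the length condition, and the fact that the action only deletes factors handles the index-set condition).
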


\begin{proof}
  This is seen from (\ref{4.2}).
\end{proof}

\begin{theorem} \label{4.4}
  We have a category equivalence
  \begin{equation*}
    \cmdO ( \cmdB ) \approx \cmdVec,
  \end{equation*}
  which is given by the mutually quasi-inverse functors $ R \otimes $, $ ( \quad )_0 $ defined by (\ref{eq3.10}), (\ref{eq3.11}).
\end{theorem}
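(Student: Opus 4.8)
The plan is to adapt the proof of Theorem \ref{3.13}, localizing at finite subsets of the index set $I$ so as to reduce to the finite-dimensional situation already handled there. By Theorem \ref{4.1}, we have $R_0 = k$ and $\kappa_M$ is injective for every $M$. The first of these shows, exactly as in Section 3, that $( R \otimes X )_0 = R_0 \otimes X = X$, so that $\iota_X$ is an isomorphism, while $R \otimes X$ is integrable by the same argument as in Lemma \ref{4.3}. Hence the only remaining point is to prove that $\kappa_M$ is surjective for each $M \in \cmdO ( \cmdB )$, that is, that $M = R\, M_0$ where $M_0 = \{ z \in M \mid e_{i k} z = 0 \text{ for all } i \in I,\, k \}$.

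For a finite subset $F \subseteq I$, let $V_F$, $W_F$ be the subspaces spanned by the $f'_{i k}$, $e_{i k}$ with $i \in F$. These are finite-dimensional Yetter--Drinfeld submodules, so the subalgebras $R_F$, $S_F$ of $R$, $S$ generated by them are the Nichols algebras of $V_F$, $W_F$. Using the $Q^+$-grading, where $Q^+ = \bigoplus_i \cmdbbN \alpha_i$ and along which $\tau$ pairs the weight space $R_{-\beta}$ with $S_\beta$, one checks that $R_F = \bigoplus_{\mathrm{supp}\,\beta \subseteq F} R_{-\beta}$ and likewise for $S_F$, and that $\tau$ restricts to a non-degenerate pairing $\tau_{R_F S_F}$. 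Thus $\cmdB_F := R_F \# S_F$ is a generalized $q$-boson algebra satisfying the hypotheses of Theorem \ref{3.13}; in particular it has an extremal projector $\gamma_F \in \hat{\cmdB}_F$ enjoying the properties of Proposition \ref{3.16}.

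Now fix $m \in M$ and choose $( F , n )$ as in Definition \ref{4.2}. The length clause furnishes, for each element of $M$, a bound beyond which $S_F$ acts by zero, so $M$ is integrable over $\cmdB_F$ in the sense of Definition \ref{3.11}, and Proposition \ref{3.16} applies over $\cmdB_F$: we obtain $\gamma_F m \in \{ z \mid S_F ( 1 ) z = 0 \}$ together with the essentially finite expansion $m = \sum_\beta r_\beta^{a}\, \gamma_F s_\beta^{a} m$, summed over dual bases of the $F$-supported weight spaces. The genuinely new point beyond Section 3 is that $\gamma_F m$ lies in the full $M_0$, not merely in its $\cmdB_F$-analogue. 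To see this, observe that for any finite $F' \supseteq F$ every term $\cmdS ( r_\beta^{a} ) \# s_\beta^{a}$ of $\gamma_{F'}$ whose weight $\beta$ is not supported on $F$ annihilates $m$ by the finite-support clause of Definition \ref{4.2}; hence $\gamma_{F'} m = \gamma_F m$. Applying Proposition \ref{3.16}(1) over $\cmdB_{F'}$ with $F' \ni i$ then gives $e_{i k}\, \gamma_F m = e_{i k}\, \gamma_{F'} m = 0$ for every $i$, as claimed.

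The same reasoning applies to each $s_\beta^{a} m$, which is again $( F , n )$-integrable, so $\gamma_F s_\beta^{a} m \in M_0$; substituting into the expansion above yields $m = \sum_\beta r_\beta^{a} ( \gamma_F s_\beta^{a} m ) \in R\, M_0$, the required surjectivity. I expect the main obstacle to be precisely this promotion of $\gamma_F m$ from the $\cmdB_F$-invariants to the full invariants $M_0$: the extremal projector of the global $\cmdB$ converges in no completion, since each graded piece $S ( n )$ is now infinite-dimensional, so one cannot simply transplant the proof of Theorem \ref{3.13}. The monotonicity $\gamma_{F'} m = \gamma_F m$, forced by the finite-support clause of integrability, is what lets the finite-$F$ projectors do the work of the nonexistent global one; verifying this compatibility, together with the sub-Nichols and non-degeneracy claims of the second paragraph, is where the real labor lies.
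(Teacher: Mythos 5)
Your proof is correct, and it reaches the surjectivity of $\kappa_M$ by a genuinely different route than the paper. The paper (Remark \ref{4.5}) also localizes at finite subsets $F\subset I$, but then globalizes: it refines the topology on $\cmdB$ by the left ideals $\cmdI_{n,F}=\bigoplus_{l\geq n}R\otimes S(l)+R\otimes\cmdKer\tilde{\pi}_F$ indexed by the directed set of pairs $(n,F)$, proves that the completion $\hat{\rho}:\hat{\cmdB}\rightarrow\cmdEnd(R)$ of $\rho$ is an isomorphism of complete topological algebras, and defines a single \emph{global} extremal projector $\gamma=\hat{\rho}^{-1}(r\mapsto\varepsilon(r)1)$ in $\hat{\cmdB}$, characterized by projecting onto each finite-stage projector $\gamma_F$; surjectivity then follows, exactly as in Theorem \ref{3.13}, from the convergence of the net $\sum_{p\in\mathfrak{X}_{n,F}}r_p\gamma s_p\rightarrow 1$. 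You avoid the global object entirely: your key lemma is the stabilization $\gamma_{F'}m=\gamma_F m$ for finite $F'\supseteq F$, forced by the finite-support clause of Definition \ref{4.2}, which promotes $\gamma_F m$ from $S_F(1)$-invariance to invariance under all of $S(1)$; the essentially finite expansion of $m$ over $\cmdB_F$ then finishes the argument. Your route is more elementary, dispensing with the topological-algebra formalism, the isomorphism $\hat{\rho}$, and the convergence statement, whereas the paper's route produces the extremal projector of $\cmdB$ itself (the generalization of Nakashima's), an object of independent interest; note also that both arguments hinge on the same finite-support clause, which is precisely the paper's repair of Tan's definition (Remark \ref{4.6}). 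One correction to a side remark: your claim that the global projector ``converges in no completion'' is refuted by the paper --- in the $(n,F)$-topology above it does converge, since for fixed degree $n$ only finitely many monomials are supported in a given finite $F$, so all but finitely many terms of $\sum_{i_n}\cmdS(r_{i_n})\# s_{i_n}$ lie in each $\cmdI_{n',F}$. This inaccuracy does not affect your proof, but it understates what the completion technique achieves; the genuine labor you identify (the sub-Nichols property of $R_F$, $S_F$, the weight-orthogonality and non-degeneracy of $\tau_{R_F S_F}$, and the compatible choice of dual bases making $\gamma_{F'}-\gamma_F$ supported off $F$) is real but standard, and is implicitly carried out in Remark \ref{4.5} (1), (5) as well.
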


\begin{remark} \label{4.5}
To prove this theorem in the same way of the proof of Theorem \ref{3.13}, we remark that the argument in \ref{3.5}, \ref{3.6} can be modified, as follows, so as to fit in with the present situation.
We may suppose that the index set $I$ is (countably) infinite since if it is finite, we do not need any modification.

(1) Given a subset $ F \subset I $, we define
  \begin{equation*}
    \cmdbbA_F = ( a_{i j} )_{i, j \in F} , \quad \cmdbbm_F = ( m_i \mid i \in F )
  \end{equation*}
  with the restricted index set.
  Since $ \cmdbbA_F $ is still a symmetrizable Borcherds-Cartan matrix,
  we have the quantized enveloping algebra $ U_F $, say, associated to $ \cmdbbA_F $, $ \cmdbbm_F $.
  Let $ R_F $, $ S_F $, $ \cmdB_F $ denote the associated objects corresponding to $ R , S , \cmdB $, respectively.
  Notice that $ R_F \subset R $, $ S_F \subset S $ , and so $ \cmdB_F \subset \cmdB $, since $ R_F $, $ S_F $ are Nichols.
  If $F$ is finite, the completion $ \hat{ \cmdB }_F $ of $ \cmdB_F $ is defined so as in \ref{3.5}.

(2) Recall that $S(1)$ ($=W$) has a basis, $(e_{ik}\;|\;i\in I,\ 1\leq k\leq m_{i})$.
For a subset $F\subset I$, the subobject $S_{F}(1)$ of $S(1)$ in ${}_{ U^0 }^{ U^0 }\cmdYD$ has a basis, $(e_{ik}\;|\;i\in F,\ 1\leq k\leq m_{i})$.
We have a projection $\pi_{F} : S(1)\rightarrow S_{F}(1)$ given by
\[ \pi_{F}(e_{ik})=\left\{\begin{array}{ll} e_{ik} & \mbox{if $i\in F$}, \\ 0 & \mbox{otherwise.} \end{array}\right. \]
This uniquely extends to a projection $S\rightarrow S_{F}$ onto the braided graded Hopf subalgebra $S_{F}\subset S$, which we call $\tilde{\pi}_{F}$.
One sees that the kernel $\cmdKer \tilde{\pi}_{F}$ is the ideal of $S$ generated by all $e_{ik}$ with $i\not\in F$.

(3) In what follows we restrict the subsets $F\subset I$ in our consideration, only to finite ones.
The pairs $(n,F)$ of an integer $n\geq 0$ and a finite subset $F\subset I$ form a directed set with respect to the natural order which is defined so that $(n,F)\leq (l,G)$ if $n\leq l$ and $F\subset G$.
Given such a pair $(n,F)$, define a left ideal of $\cmdB$ by
\[ \cmdI_{n,F}:=\bigoplus_{l\geq n}R\otimes S(l)+R\otimes\cmdKer\tilde{\pi}_{F}. \]
Notice that if $(n,F)\leq (l,G)$, then $\cmdI_{n,F}\supset\cmdI_{l,G}$, whence $(\cmdI_{n,F})$ form a projective system.
We can regard $\cmdB$ as a topological vector space which has $(\cmdI_{n,F})$ as its basis of neighborhoods of $0$.
By modifying the proof of Proposition \ref{3.14}, we see the following.
\begin{enumerate}
\renewcommand{\labelenumi}{(\roman{enumi})}
\item $\cmdB$ is a topological algebra, whence its completion $\hat{\cmdB}$ is a complete topological algebra.
\item An object in $\cmdO(\cmdB)$ is precisely a left $\cmdB$-module $M$ such that each element $m\in M$ is annihilated by some $\cmdI_{n,F}$.
Such a module in turn is identified with a left $\hat{\cmdB}$-module $M$ such that each element $m\in M$ is annihilated by the completion $\hat{\cmdI}_{n,F}$ of some $\cmdI_{n,F}$.
\end{enumerate}

(4) We claim that the linear representation $\rho : \cmdB\rightarrow\cmdEnd(R)$ is continuous, and its completion gives an isomorphism $\hat{\rho} : \hat{\cmdB}\xrightarrow{\simeq}\cmdEnd(R)$ of complete topological algebras.
To see this, notice that $\hat{\cmdB}$ is the projective limit of $\prod_{n=0}^{\infty}R\otimes S_{F}(n)$ along the projections
\[ \prod_{n=0}^{\infty}R\otimes S_{G}(n)\rightarrow\prod_{n=0}^{\infty}R\otimes S_{F}(n)\quad (F\subset G\subset I) \]
analogously defined as the $\tilde{\pi}_{F}$ above, while $\cmdEnd(R)$ is the projective limit of $\cmdHom(R_{F},R)$ along the restriction maps $\cmdHom(R_{G},R)\rightarrow\cmdHom(R_{F},R)$.
The claim then follows since we see as in the proof of Theorem \ref{3.6} that the linear representations $\cmdB_{F}\rightarrow\cmdEnd(R_{F})$, with $S\otimes_{S_{F}}$ applied, give rise to isomorphisms
\[ \prod_{n=0}^{\infty}R\otimes S_{F}(n)\xrightarrow{\simeq}\cmdHom(R_{F},R) \]
of complete topological vector spaces, which are compatible with the projective systems on both sides and whose projective limit coincides with $\hat{\rho}$.
We define the {\em extremal projector} $\gamma$ in $\hat{\cmdB}$ to be a unique element such that $\hat{\rho}(\gamma)(r)=\varepsilon(r)1$ for every $r\in R$.
By this definition, $\gamma$ has the same property as described in Part (1) of Proposition \ref{3.16}.
We see that this $\gamma$ is characterized as the element which for every finite $F\subset I$, is mapped by the projection $\hat{\cmdB}\rightarrow\prod_{n=0}^{\infty}R\otimes S_{F}(n)$ to the extremal projector in $\hat{\cmdB}_{F}$ as defined by Definition \ref{3.15}.

(5) For each integer $n\geq 0$, we can choose a basis $(s_{p}\;|\;p\in\mathfrak{X}(n))$ of $S(n)$ so that each $s_{p}$ is a monomial in $e_{ik}$ of length $n$.
For a finite subset $F\subset I$, let $\mathfrak{X}_{F}(n)$ denote the set of those indicies $p\in\mathfrak{X}(n)$ for which $s_{p}$ is a monomial only in $e_{ik}$ with $i\in F$.
Then, $(s_{p}\;|\;p\in\mathfrak{X}_{F}(n))$ is a basis of $S_{F}(n)$.
Set
\[ \mathfrak{X}=\bigsqcup_{n=0}^{\infty}\mathfrak{X}(n),\qquad \mathfrak{X}_{n,F}=\bigsqcup_{l<n}\mathfrak{X}_{F}(l). \]
Then we have
\[ \cmdB=\bigoplus_{p\in\mathfrak{X}}R\otimes s_{p},\quad
\cmdI_{n,F}=\bigoplus_{p\not\in\mathfrak{X}_{n,F}}R\otimes s_{p},\quad
\hat{\cmdB}=\prod_{p\in\mathfrak{X}}R\otimes s_{p},\quad
\hat{\cmdI}_{n,F}=\prod_{p\not\in\mathfrak{X}_{n,F}}R\otimes s_{p}. \]
Let $\cmdS$ denote the antipode of $R$.
Since it is bijective (see the remark just above Proposition \ref{3.16}), there uniquely exists a family $(r_{p}\;|\;p\in\mathfrak{X})$ of elements in $R$ such that
\[ \gamma = \sum_{p\in\mathfrak{X}}\cmdS(r_{p})\otimes s_{p}. \]
The characterization of $\gamma$ noted last in (4) above proves that for each pair $(n,F)$, $(r_{p}\;|\;p\in\mathfrak{X}_{F}(n))$ is the basis of $R_{F}(n)$ dual to $(s_{p}\;|\;p\in\mathfrak{X}_{F}(n))$, and so that $(r_{p}\;|\;p\in\mathfrak{X})$ is a (unique) basis of $R$ such that $\tau(r_{p},s_{q})=\delta_{pq}$.
Obviously, $\gamma$ has the same property as described in Part (3) of Proposition \ref{3.16}.
The statement of the remaining Part (2) is modified so that the sequence
\[ \sum_{p\in\mathfrak{X}_{n,F}}r_{p}\gamma s_{p}\ \ (\in\cmdB) \]
directed by the pairs $(n,F)$ converges in $\hat{\cmdB}$ to the unit $1$.
\end{remark}

\begin{proof}[Proof of Theorem \ref{4.4}]
We may suppose as above that $I$ is infinite.
To prove the remaining surjectivity of $\kappa_{M}$, let $m\in M$ ($\in\cmdO(\cmdB)$).
Then the last statement of Remark \ref{4.5} proves
\[ m = (\lim_{n,F}\sum_{p\in\mathfrak{X}_{n,F}}r_{p}\gamma s_{p})m = \lim_{n,F}(\sum_{p\in\mathfrak{X}_{n,F}}r_{p}\gamma s_{p}m). \]
By the integrability this last limit equals the finite sum $\sum_{p\in\mathfrak{X}_{n,F}}r_{p}\gamma s_{p}m$, where $(n,F)$ is large enough.
The desired surjectivity follows since $\gamma s_{p}m\in M_{0}$ by the property $S(n)\gamma = 0$ ($n>0$); see Remark \ref{4.5} (4).
\end{proof}

\begin{remark} \label{4.6}
  Tan \cite[Proposition 3.2(3)]{Tan} asserts essentially the same result as above, in the slightly specialized situation when all $ m_i $ equal $1$.
  But, the argument of his proof, asserting that $ X g ( u_2 ) = X $ on Page 4346, line-3, is wrong.
  His definition of integrability given in \cite[p.4343, lines 3-5]{Tan} is incomplete, I think.
\end{remark}

\subsection{}

Suppose in particular that the index set $I$ is finite, $ \cmdbbA $ is a symmetrizable generalized Cartan matrix, and all $ m_i $ equal $1$.
Let $ e_i $, $ f_i $ denote the present generators $ e_{i 1} $, $ f_{i 1} $ of $ U^+ $, $ U^- $, respectively.
In this specialized situation the result essentially the same as our last theorem was first announced by Kashiwara \cite[Remark 3.4.10]{K},
and then proved by Nakashima \cite[Theorem 6.1]{N},
who introduced the extremal projector in his situation; cf. Definition \ref{3.15}. 
More precisely, Nakashima worked on the algebra $ B = B_q ( \cmdg ) $, which is bigger than our $ \cmdB $, including $ U^0 $, too.
We remark that his $B$ coincides with the $ ( U , U' ) $-bicleft extension over $k$ which was given by \cite[Proposition 4.9]{M1},
where $U'$ denote the (graded) Hopf algebra defined just as $U$,
but the relation $ [ e_i , f_j ] = \delta_{i j} \xi_i^{-1} ( K_i - K_i^{-1} ) $ is replaced by $ [ e_i , f_j ] = 0 $.
We also remark that Nakashima's \cite[Theorem 6.1]{N} can be reformulated as a category equivalence between $ \cmdO ( B ) $ and the category of all $P$-graded vector spaces,
where $P$ denotes the group of the weight lattice; see \cite[p.286]{N}.

\subsection{}

Suppose further that the generalized Cartan matrix $ \cmdbbA $ is of finite type.
We choose a diagonal matrix $ \cmdbbD = \cmddiag ( s_i \mid i \in I ) $ which makes $ \cmdbbD \cmdbbA $ symmetric,
in the standard manner so that $ 1 \leq s_i \leq 3 $, in particular.
Suppose that $q$ is a root of $1$ whose order $N$, say, is odd, and is not divided by $3$ if the Dynkin diagram of $ \cmdbbA $ includes a connected component of type $ G_2 $.
The Frobenius-Lusztig kernel $ u = u_q $ is the finite-dimensional quotient Hopf algebra of $ U = U_q $ subject to those relations which say that the $N$-th powers of $ q^h ( h \in P^{\vee} ) $ and of the positive root vectors in $ U^{\pm} $ are $1$ and $0$, respectively; see \cite{L}.
Let $ u^0 $, $ u^{\geq 0} $, $ u^{\leq 0} $, $ R' $, $ S' $ denote the natural images of $ U $, $ U^{\geq 0} $, $ U^{\leq 0} $, $R$, $S$, respectively, in $u$.
It is known that $ R' $, $ S' $ are Nichols algebras in $ {}_{u^0}^{u^0} \cmdYD $ so that $ R' \cmddotrtimes u^0 = u^{\leq 0} $, $ S' \cmddotrtimes u^0 = u^{\geq 0} $.
We see that the skew pairing $ \tau $ above factors through $ u^{\leq 0} \otimes u^{\geq 0} \rightarrow k $,
which is a non-degenerate skew pairing.
Therefore, to the associated generalized $q$-boson algebra now of finite dimension, Remark \ref{3.17} can apply.

\section*{Acknowledgments}

The author would thank the refree for his or her helpful comments which contain especially a hint to improve the exposition of Subsection 4.2.



\begin{thebibliography}{9999}

\bibitem [AM]{AM}
  K. Amano and A. Masuoka,
  \newblock Picard-Vessiot extensions of artinian simple module algebras,
  \newblock J, Algebra 285(2005), 743--767.

\bibitem [AMT]{AMT}
  K. Amano, A. Masuoka and M. Takeuchi,
  \newblock Hopf algebraic approach to Picard-Vessiot theory, in: M. Hazewinkel (ed.), ``Handbook of Algebra Vol.6'',
  \newblock 2009, Elsevier.

\bibitem [AS]{AS}
  N. Andruskiewitcsch and H.-J. Schneider,
  \newblock Pointed Hopf algebras, in: S. Montgomery et al. (eds.), ``New Directions in Hopf Algebra Theory'',
  \newblock MSRI Publ. 43(2002), Cambridge Univ. Press, pp.1-68.

\bibitem [DT]{DT}
  Y. Doi and M. Takeuchi,
  \newblock Multiplication alteration by two-cocycles -- the quantum version,
  \newblock Comm. Algebra 22(1994), 5715--5732.
  
\bibitem [J]{J}
  A. Joseph,
  \newblock ``Quantum Groups and Their Primitive Ideals'',
  \newblock Ergebnisse der Mathematik und ihrer Grenzgebiete, Band 29, 1995, Springer.

\bibitem [Kan]{Kan}
  S.-J. Kang,
  \newblock Quantum deformations of generalized Kac-Moody algebras and their modules,
  \newblock J. Algebra 175(1995), 1041--1066.

\bibitem [KT]{KT}
  S.-J. Kang and T. Tanisaki,
  \newblock Universal R-matrices and the center of the quantum generalized Kac-Moody algebras,
  \newblock Hiroshima Math, J. 27(1997), 347--360.

\bibitem [K]{K}
  M. Kashiwara,
  \newblock On crystal bases of the $q$-analogue of universal enveloping algebras,
  \newblock Duke Math. J. 63(1991), 465--516.

\bibitem [Lu]{Lu}
  J.-H. Lu,
  \newblock On the Drinfeld double and the Heisenberg double of a Hopf algebra,
  \newblock Duke Math. J. 74(1994), 763--776.

\bibitem [L]{L}
  G. Lustzig,
  \newblock Quantum groups at roots of 1,
  \newblock Geom, Dedicata 35(1990), 89--114.

\bibitem [M1]{M1}
  A. Masuoka,
  \newblock Abelian and non-abelian second cohomologies of quantized enveloping algebras,
  \newblock J. Algebra 320(2008), 1--47.

\bibitem [M2]{M2}
  A. Masuoka,
  \newblock Construction of quantized enveloping algebras by cocycle deformation,
  \newblock The Arabian Journal of Science and Engineering - Theme Issues, Vol.33, No.2C(2008), 387-406.
  
\bibitem [Mo]{Mo}
  S. Montgomery,
  \newblock ``Hopf Algebras and Their Actions on Rings'',
  \newblock CBMS Conf. Series in Math. 82, Amer. Math. Soc., Providence, RI, 1993.

\bibitem [N]{N}
  T. Nakashima,
  \newblock Extremal projectors of $q$-boson algebras,
  \newblock Commun. Math. Phys. 244. (2004), 285--296.

\bibitem [R]{R}
  D. Radford,
  \newblock The structure of Hopf algebras with a projection,
  \newblock J. Algebra 92(1985), 322--347.

\bibitem [RS]{RS}
  D. Radford and H.-J. Schneider,
  \newblock Biproducts and two-cocycle twists of Hopf algebras, in: T. Brzezi\'nski et al. (eds.), ``Modules and Comodules'',
  \newblock Trends in Mathematics, Birk\"auser, Basel, 2008, pp. 331--335.

\bibitem [Sw]{Sw}
  M. Sweeder,
  \newblock ``Hopf Algebras'',
  \newblock Benjamin, New York, 1969.

\bibitem [T]{T}
  M. Takeuchi,
  \newblock Topological coalgebras,
  \newblock J. Algebra 97(1985), 505--539.

\bibitem [Tan]{Tan}
  Y. Tan,
  \newblock The $q$-analogue of bosons and Hall algebras,
  \newblock Comm. Algebra 30(2002), 4335--4347.

\bibitem [Ts]{Ts}
  T. Tanisaki,
  \newblock Killing forms, Harish-Chandra isomorphisms, and universal R-matrices for quantum algebras,
  \newblock Inter. J. Mod. Phys. A7, Suppl. 1B(1992), 941--961.

\end{thebibliography}
\end{document}